\tikzset{Rightarrow/.style={double equal sign distance,>={Implies},->}, triple/.style={-,preaction={draw,Rightarrow}}}
\renewcommand{\tilde}[1]{\widetilde{#1}}
\newcommand{\undl}[1]{\underline{#1}}
\newcommand{\op}{\mathrm{op}}
\newcommand{\orient}{\mathrm{or}}
\newcommand{\el}{\mathrm{el}}
\newcommand{\sA}{\mathscr{A}}
\newcommand{\sC}{\mathscr{C}}
\newcommand{\sD}{\mathscr{D}}
\newcommand{\sE}{\mathscr{E}}
\newcommand{\EE}{\mathbb{E}}
\newcommand{\RR}{\mathbb{R}}
\newcommand{\LL}{\mathbb{L}}
\newcommand{\fCat}{\mathfrak{Cat}}
\newcommand{\Spaces}{\mathfrak{S}}
\newcommand{\bD}{\mathbf{\Delta}}
\newcommand{\bN}{\mathbf{N}}
\newcommand{\Tang}{\mathrm{Tang}}
\newcommand{\Bord}{\mathrm{Bord}}
\newcommand{\Adj}{\mathrm{Adj}}
\newcommand{\Map}{\mathrm{Map}}
\newcommand{\Fun}{\mathrm{Fun}}
\newcommand{\Cat}{\mathrm{Cat}}
\newcommand{\Seg}{\mathrm{Seg}}
\newcommand{\Glb}{\mathrm{Glb}}
\newcommand{\GlbSeg}{\mathrm{GlbSeg}}
\newcommand{\Sq}{\mathrm{Sq}}
\newcommand{\id}{\mathrm{id}}
\newcommand{\ev}{\mathrm{ev}}
\newcommand{\zig}{\mathcal{Z}}
\DeclareMathOperator*{\colim}{colim}
\newcommand{\sub}{\subseteq}
\newcommand{\vphi}{\varphi}
\theoremstyle{definition}
\newtheorem{dfn}{Definition}[section]
\newtheorem{rmk}[dfn]{Remark}
\newtheorem{exm}[dfn]{Example}
\theoremstyle{theorem}
\newtheorem{prp}[dfn]{Proposition}
\newtheorem{lmm}[dfn]{Lemma}
\newtheorem{thm}[dfn]{Theorem}
\newtheorem{crl}[dfn]{Corollary}
\newtheorem{cnj}[dfn]{Conjecture}
\newtheorem{fct}[dfn]{Fact}
\newcounter{mainthms}
\theoremstyle{theorem}
\newtheorem{mainthm}[mainthms]{Theorem}
\newtheorem{maincnj}[mainthms]{Conjecture}
\title{Zigzags and free adjunctions}
\author{Lorenzo Riva}
\address{Center for Mathematical Sciences and Applications, Harvard University, Cambridge, USA}
\email{lorenzo@cmsa.fas.harvard.edu}
\author{Martina Rovelli}
\address{Department of Mathematics and Statistics, University of Massachusetts Amherst, Amherst, USA}
\email{mrovelli@umass.edu}
\address{Department of Mathematics and Statistics, University of Ottawa, Ottawa, Canada}
\email{mrovelli@uottawa.ca}
\date{\today}
\begin{document}

\maketitle

\begin{abstract}
	We construct an explicit combinatorial model of the functor which adds right adjoints to the morphisms of an $\infty$-category, and we speculate on possible extensions to higher dimensions.
\end{abstract}

\setcounter{tocdepth}{1}
\tableofcontents

\section{Introduction} \label{sec:intro}

\subsection{Motivation: adding adjoints}

Fix an $(\infty,n)$-category $\sC$ and two $k$-morphisms $f : x \to y$ and $g : y \to x$ in $\sC$ between parallel $(k-1)$-morphisms $x$ and $y$. Then $f$ is a \emph{left adjoint} of $g$ (and $g$ is a \emph{right adjoint} of $f$) if there are $(k+1)$-morphisms $\eta : \id(x) \to g \circ_k f$ and $\varepsilon : f \circ_k g \to \id(y)$ in $\sC$ satisfying the snake equations
\begin{equation*}
	(\varepsilon \circ_k \id(f)) \circ_{k+1} (\id(f) \circ_k \eta) \simeq \id(f), \qquad (\id(g) \circ_k \varepsilon) \circ_{k+1} (\eta \circ_k \id(g)) \simeq \id(g).
\end{equation*}
If, in addition, $\sC$ is symmetric monoidal, then we can similarly define a notion of \emph{left/right adjoints} (more commonly called \emph{duals}) for objects. 

Given any homotopy commutative monoid $S$ (i.e. $S$ is an $\EE_\infty$-algebra in the $\infty$-category $\Spaces$ of spaces) we can ask the following question: what is the smallest symmetric monoidal $(\infty,n)$-category $F^{n, \mathrm{adj}}(S)$ such that (1) $S$ is contained in the space of objects of $F^{n, \mathrm{adj}}(S)$ and (2) for $0 \leq k < n$, all $k$-morphisms of $F^{n, \mathrm{adj}}(S)$ have a left and right adjoint? Notice that this is a purely categorical property, so it is very surprising that the answer, at least in a special case, appears to be purely geometric: the \emph{cobordism hypothesis} \cite{BD1995,Lurie2009} asserts that, when $S = A^1$ is generated by a single element, $F^{n, \mathrm{adj}}(A^1)$ is equivalent to the $(\infty,n)$-category $\mathrm{Bord}_n^{\mathrm{fr}}$ whose $k$-morphisms are $n$-framed compact $k$-manifolds with corners. A reference for this statement can be found in \cite{Lurie2009} and the arguments depend crucially on decomposing of manifolds into suitable handles, each of which witnesses an adjunction.

We can imagine a different strategy: first build $F^{n, \mathrm{adj}}(S)$ for any $S$ with purely categorical or combinatorial tools, and then exhibit an explicit equivalence $F^{n, \mathrm{adj}}(A^1) \simeq \mathrm{Bord}_n^{\mathrm{fr}}$, thus bypassing the need to show that $\mathrm{Bord}_n^{\mathrm{fr}}$ has the desired universal property. Variations of this strategy have been used in dimensions $n \leq 3$ for this and similar problems -- see, for example, \cite{Abrams1996,SP2009,Juhasz2018} -- but they rely on our understanding of low-dimensional manifolds. Nevertheless, it is interesting to think whether we can exploit some \emph{partial} knowledge of high-dimensional manifolds, like their handle decompositions, to make this strategy work. The question then turns to: how do we build $F^{n, \mathrm{adj}}(S)$?

In this paper we answer this question in the slightly different context of freely adding right adjoints to the morphisms of an $(\infty,1)$-category $\sC$. We can describe an $(\infty,2)$-category $F^{2, \mathrm{ladj}}(\sC)$
informally as follows:
\begin{enumerate}[start = 0]
	\item its objects are the objects of $\sC$;
	\item its $1$-morphisms are formal zigzags of $1$-morphisms of $\sC$, with composition given by concatenation;
	\item its $2$-morphisms are generated under vertical composition by formal zigzags of commutative squares in $\sC$, subject to the condition that the first and last vertical legs of each zigzags are invertible, with a minimal set of relations.
\end{enumerate}
For example, given any $1$-morphism $f$ of $\sC$, pictured as an arrow $f = (x \xlongrightarrow{f} y)$, there should be a corresponding $1$-morphism $f'$ going in the opposite direction of $f$, pictured as $f' = (y \xlongleftarrow{f} x)$, and since we are enforcing that $F^{2, \mathrm{ladj}}(\sC)$ be an $(\infty,2)$-category we must also have composites of the form
\begin{equation*}
	g' \circ_1 f = (x \xlongrightarrow{f} y \xlongleftarrow{g} x), \qquad h \circ_1 g' \circ_1 f = (x \xlongrightarrow{f} y \xlongleftarrow{g} x \xlongrightarrow{h} y),
\end{equation*}
and so on. The generating $2$-morphisms of $F^{n, \mathrm{ladj}}(\sC)$ are of the form
\begin{equation*}
	\begin{tikzcd}
		x_1 \ar[r, "f_1"] \ar[d, "\simeq"'] & x_2 \ar[d, "g_1"] & x_3 \ar[l, "f_2"'] \ar[d, "g_2"] \ar[r, "f_3"] & x_4 \ar[d, "g_3"] & x_5 \ar[l, "f_4"'] \ar[d, "\simeq"] \\
		y_1 \ar[r, "h_1"'] & y_2 & y_3 \ar[l, "h_2"] \ar[r, "h_3"'] & y_4 & y_5 \ar[l, "h_4"] 
	\end{tikzcd}
\end{equation*}
where each individual square is a commutative square in $\sC$ and the left and right vertical morphisms are invertible. In particular, given each $f$ in $\sC$ we have two $2$-morphisms
\begin{equation*}
	\eta := \,
	\begin{tikzcd}
		x \ar[r, equal] \ar[d, equal] & x \ar[d, "f"] & x \ar[l, equal] \ar[d, equal] \\
		x \ar[r, "f"'] & y & x \ar[l, "f"]
	\end{tikzcd}
	\qquad
	\varepsilon_f := \,
	\begin{tikzcd}
		y \ar[d, equal] & x \ar[l, "f"'] \ar[d, "f"] \ar[r, "f"] & y \ar[d, equal] \\
		y \ar[r, equal] & y & y \ar[l, equal]
	\end{tikzcd}
\end{equation*}
which exhibit $f$ as a left adjoint of $f'$: one of the snake equations is depicted as
\begin{equation*}
	\begin{tikzcd}
		x \ar[r, equal] \ar[d, equal] & x \ar[d, "f"] \ar[r, equal] & x \ar[d, equal] \ar[r, "f"] & y \ar[d, equal] \\
		x \ar[r, "f"] \ar[d, equal] & y \ar[d, equal] & x \ar[l, "f"'] \ar[d, "f"] \ar[r, "f"] & y \ar[d, equal] \\
		x \ar[r, "f"] & y \ar[r, equal] & y \ar[r, equal] & y
	\end{tikzcd}
	\, \simeq \,
	\begin{tikzcd}
		x \ar[r, equal] \ar[d, equal] & x \ar[d, "f"] & x \ar[l, equal] \ar[d, "f"] \ar[r, "f"] & y \ar[d, equal] \\
		x \ar[r, "f"'] & y & y \ar[l, equal] \ar[r, equal] & y
	\end{tikzcd}
	\, \simeq \,
	\begin{tikzcd}
		x \ar[r, "f"] \ar[d, equal] & y \ar[d, equal] \\
		x \ar[r, "f"'] & y
	\end{tikzcd}
\end{equation*}
where the first equivalence is obtained by composing the diagram vertically and the second by composing it horizontally (we can do this since $2$-morphisms in an $(\infty,2)$-category satisfy the \emph{interchange law}), and the other snake equation is similar. Constructions of this type have appeared in the literature to answer similar questions about adding adjoints -- see, for example, Dawson-Par\'e-Pronk's work on strict $2$-categories of fences \cite{DPP2003} or Cnossen-Lez-Linskens' work on span $2$-categories \cite{CLL2025} -- or inverses -- see Dwyer-Kan's hammock localization of simplicial categories \cite{DK1980} or the general calculus of fractions \cite{GZ1967}. Moreover, using work of Loubaton-Ruit \cite{LR2025} on the universal property of the double $\infty$-category of squares in $\sC$, it's not surprising to find that the $(\infty,2)$-category that we described above also has some kind of universal property. 

\subsection{Main results}

Describing an $(\infty,2)$-category using its $k$-morphisms hides a lot of data and does not make clear whether the construction accounts for higher coherence data. The first main result is that this informal description can be upgraded to a formal one: 
\begin{mainthm}[{\Cref{dfn:zigzag}, \Cref{lmm:vert-decomp}, \Cref{prp:horiz-decomp}}]
	There is a functor $\zig_{+}^2 : \fCat_{(\infty,1)} \to \fCat_{(\infty,2)}$ such that, for any $\sC \in \fCat_{(\infty,1)}$,
	\begin{enumerate}[start = 0]
		\item the space of objects of $\zig_{+}^2(\sC)$ is the space of objects of $\sC$;
		\item the space of $1$-morphisms of $\zig_{+}^2(\sC)$ is the space of formal zigzags of $1$-morphisms of $\sC$, with composition given by concatenation;
		\item the space of $2$-morphisms of $\zig_{+}^2(\sC)$ is generated under vertical composition by the space of formal zigzags of commutative squares in $\sC$, subject to the condition that the first and last vertical legs of each zigzags are invertible, with a minimal set of relations.
	\end{enumerate}
	Moreover, there is an inclusion $i : \sC \to \zig_{+}^2(\sC)$ (natural in $\sC$) which sends all $1$-morphisms of $\sC$ to $1$-morphisms with a right adjoint in $\zig_{+}^2(\sC)$.
\end{mainthm}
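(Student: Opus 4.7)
The plan is to first give a concrete combinatorial construction of $\zig_+^2(\sC)$ realizing the informal description in the introduction, and then verify the three numbered properties together with the claim about the inclusion. For the construction itself, which is carried out in \Cref{dfn:zigzag}, I would work in a Segal-type model of $(\infty,2)$-categories (for instance, $2$-fold complete Segal spaces), and define $\zig_+^2(\sC)$ as a bisimplicial space whose horizontal direction encodes zigzag words of $1$-morphisms of $\sC$ and whose vertical generating cells are the commutative squares in $\sC$ subject to the invertibility condition on the outer vertical legs. Functoriality in $\sC$ is then automatic, since each level is assembled out of mapping spaces and squares of $\sC$, both of which are manifestly functorial in $\sC$.

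Properties (0) and (1) will fall out of the definition: the bisimplicial shapes encoding $0$- and $1$-cells reduce to single objects and to arbitrary zigzag words of $1$-morphisms of $\sC$, with composition of $1$-cells implemented by concatenation of words. Property (2) is the combinatorial heart of the proof, and this is where \Cref{lmm:vert-decomp} and \Cref{prp:horiz-decomp} enter: the vertical decomposition lemma expresses every $2$-cell of $\zig_+^2(\sC)$ as a vertical composite of the generating zigzags of commutative squares (with invertible outer vertical legs), while the horizontal decomposition proposition identifies each such generating cell with a concatenation of length-one zigzags of squares. Assembled, these two results yield the claimed presentation of the space of $2$-cells. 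The hard part will be showing that the coherences built into the Segal-type presheaf match \emph{exactly} the minimal set of relations claimed---neither adding nor losing any---and I expect this to require a careful induction on the length of zigzag words together with a Reedy-type analysis of the indexing shape.

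For the final clause, I would construct the inclusion $i : \sC \to \zig_+^2(\sC)$ by sending each object to itself and each $1$-morphism of $\sC$ to its corresponding length-one zigzag; naturality in $\sC$ is inherited from the functoriality already established. Given $f : x \to y$ in $\sC$, its right adjoint in $\zig_+^2(\sC)$ is the reversed zigzag $f'$, with unit $\eta_f$ and counit $\varepsilon_f$ exactly the zigzags of squares displayed in the introduction; the decomposition results of the previous paragraph certify these as well-defined $2$-cells of $\zig_+^2(\sC)$, and the snake equations are witnessed by the explicit horizontal and vertical pastings sketched there. To package this data coherently in $f$, I would realize the entire adjunction datum as a single natural transformation between suitable auxiliary functors $\sC \to \zig_+^2(\sC)$, thereby avoiding a per-morphism verification and yielding the naturality of $i$ as a byproduct.
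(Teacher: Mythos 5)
Your high-level plan---build a bisimplicial space directly encoding zigzags and squares, then invoke the two decomposition results, then verify the adjunction by the pastings from the introduction---captures the \emph{informal} picture but misses two things the paper actually has to do, one of which is a genuine gap in your argument.

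First, the construction. You say you would ``define $\zig_+^2(\sC)$ as a bisimplicial space whose horizontal direction encodes zigzag words of $1$-morphisms of $\sC$ and whose vertical generating cells are the commutative squares'' and that functoriality is then ``automatic.'' But this is not a definition: you would still have to give a presheaf on $\bD^2$, prove the Segal and globularity conditions, and prove it is functorial, and the obvious naive attempt does not do any of this cleanly. The paper instead \emph{derives} the combinatorial description from a categorically controlled construction. It first builds the square functor $\Sq^n$ by mapping out of lax Gray cubes $\square^n(\undl t)$ (using Campion's Gray tensor product and the Al-Agl--Brown--Steiner combinatorics), then introduces signed variants $\Sq^n_{\undl b}$ for $\undl b \in \Lambda^n$ (the iterated walking span), glues them by a pointwise colimit $Z^{n+1}_+(X) := \colim_{\undl b \in \Lambda^n} \Sq^{n+1}_{\undl b,+}(X)$, and finally applies an \emph{alternating} chain $R^{n+1}_{n+1}\LL_{n+1}\dotsm R^{n+1}_1\LL_1$ of globularization right adjoints and Segalification functors. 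The zigzag description of morphisms then falls out of the necklace formula for $\LL_k$ rather than being imposed by fiat; this is exactly what makes \Cref{lmm:vert-decomp} true. Your proposal inverts the logical order and thereby shifts all of the work onto an unverified claim.

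Second, and this is the real gap, your plan to verify the snake equations ``by the explicit horizontal and vertical pastings sketched'' in the introduction would not go through. The paper flags this explicitly: in $\zig_+^2(X) = (R_2^2\LL_2 R_1^2\LL_1 Z_+^2)(X)$ the globularization $R_1^2$ comes \emph{before} the second Segalification $\LL_2$, so the only vertical compositions available are of globular $(1,1)$-cells, and the columns of the $3\times 4$ grid in the introduction are \emph{not} globular. The usual interchange-law argument is therefore unavailable. The paper has to sidestep this by exhibiting a pre-existing relation in $(R_1^2\LL_1 Z_+^2)(X)_{1,2}$: an explicit element $\alpha$ whose restrictions $\alpha_{01},\alpha_{12},\alpha_{02}$ realize the snake equation as a single $(1,2)$-simplex (\Cref{prp:adj-in-zig}). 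Without that, you have the adjunction data but not a proof of the triangle identities.

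Your suggestion to package the adjunction datum ``as a single natural transformation between suitable auxiliary functors'' to handle naturality in $f$ is not needed for the statement as written---only naturality of $i$ in $\sC$ is claimed, and that is inherited from the naturality of $\kappa^\ast : X \to \Sq^{n+1}(X)$ plus the fact that $R$ and $\LL$ fix the already-globular, already-Segal $X$.
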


Secondly, we show that $\zig_{+}^2(\sC)$ is universal with respect to that property:
\begin{mainthm}[{\Cref{crl:univ-prop}, \Cref{crl:adj}}]
	Consider the functor $\tau_1 (-)^{\mathrm{ladj}} : \fCat_2 \to \fCat_1$ which computes the sub-$\infty$-category containing only the $1$-morphisms which are left adjoints. Then $\zig_{+}^2$ is a left adjoint of $\tau_1 (-)^{\mathrm{ladj}}$, with the inclusion $\sC \to \zig_{+}^2(\sC)$ as the unit: for any $(\infty,2)$-category $\sD$,
	\begin{equation*}
		\Map(\zig_{+}^2(\sC), \sD) \simeq \Map(\sC, \tau_1 \sD^{\mathrm{ladj}}).
	\end{equation*}
    In particular there is an equivalence $\zig^2([1]) \simeq \Adj$, where $[1]$ is the walking $1$-morphism and $\Adj$ is the walking adjunction.
\end{mainthm}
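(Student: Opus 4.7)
The plan is to promote the inclusion $i : \sC \to \zig_{+}^2(\sC)$ from Theorem A to the unit of the claimed adjunction. Since $i$ sends every $1$-morphism of $\sC$ to a left adjoint in $\zig_{+}^2(\sC)$, it factors canonically as $\sC \xrightarrow{\eta_{\sC}} \tau_1 \zig_{+}^2(\sC)^{\mathrm{ladj}} \hookrightarrow \zig_{+}^2(\sC)$, and pre-composition with $i$ yields a comparison map
$$\Phi_{\sC,\sD} : \Map(\zig_{+}^2(\sC), \sD) \longrightarrow \Map(\sC, \tau_1 \sD^{\mathrm{ladj}}),$$
natural in both variables by the functoriality of $\zig_{+}^2$. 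The substance of the proof is to show that $\Phi_{\sC,\sD}$ is an equivalence.

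I would construct the inverse explicitly by exploiting the combinatorial presentation of $\zig_{+}^2(\sC)$ established in Theorem A. Given $F : \sC \to \tau_1 \sD^{\mathrm{ladj}}$, I extend it to $\tilde F : \zig_{+}^2(\sC) \to \sD$ by: keeping $F$ on objects; sending a generating zigzag $1$-morphism $x_0 \to x_1 \leftarrow x_2 \to \cdots$ to the composite in $\sD$ obtained by interpreting each backward arrow as a chosen right adjoint of the image of the corresponding forward arrow under $F$; and sending each generating $2$-morphism — a zigzag of commutative squares with invertible outer vertical legs — to the associated iterated mate-pasting diagram in $\sD$. Invertibility of the outer legs is precisely what ensures the resulting mate has the correct source and target $1$-morphisms in $\sD$.

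The main technical obstacle is verifying well-definedness of $\tilde F$: \Cref{lmm:vert-decomp} and \Cref{prp:horiz-decomp} provide the minimal list of relations among the generators of $\zig_{+}^2(\sC)$ coming from vertical and horizontal decomposition, so the check reduces to showing that mates under the adjunctions in $\sD$ are compatible with both directions of pasting. Each such compatibility is a finite diagram chase following from the snake equations of the chosen adjunctions, and together they show that $\tilde F$ is well-defined up to a contractible space of choices, producing the desired inverse $\Psi_{\sC,\sD}$ to $\Phi_{\sC,\sD}$. A short verification using the universal factorisation of $i$ through $\tau_1\zig_{+}^2(\sC)^{\mathrm{ladj}}$ shows $\Phi \circ \Psi \simeq \id$ and $\Psi \circ \Phi \simeq \id$. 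For the corollary, setting $\sC = [1]$ in the resulting adjunction identifies $\Map(\zig_{+}^2([1]), \sD)$ with the space of left-adjoint $1$-morphisms in $\sD$; this is also the classical universal property of the walking adjunction $\Adj$ due to Schommer-Pries and Riehl-Verity, so $\zig_{+}^2([1])$ and $\Adj$ corepresent the same $\fCat_{(\infty,2)}$-valued functor and are therefore equivalent.
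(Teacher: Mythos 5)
Your overall frame (pre-composition with $i$ as the comparison map, reduction of the corollary to corepresentability of $\Adj$ via the contractibility of adjunction data) is fine, but the heart of your argument -- the explicit construction of the inverse $\Psi$ -- has a genuine gap, and it is precisely the step the paper is engineered to avoid. You treat $\zig_{+}^2(\sC)$ as if it were presented by generators and relations, citing \Cref{lmm:vert-decomp} and \Cref{prp:horiz-decomp} as supplying ``the minimal list of relations''; but those results are \emph{generation} (decomposition) statements, not a presentation, and in the paper they are used only to prove that restriction along $i$ is a monomorphism (via \Cref{crl:final-generation} and the contractibility of the space of adjunction data over a fixed left adjoint, \cite[Theorem 4.4.18]{RV2016}). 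In the Segal-space model of $(\infty,2)$-categories you cannot define a functor $\tilde F : \zig_{+}^2(\sC) \to \sD$ by prescribing values on generating zigzags and mate-pastings and then checking finitely many relations by diagram chases: a map of globular $2$-uple Segal spaces is a compatible family of maps on all spaces of $\undl{t}$-morphisms, and the coherence data you would have to produce is unbounded. ``Well-defined up to a contractible space of choices'' is exactly the assertion that needs proof, and nothing in the cited lemmas delivers it.

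The paper sidesteps this entirely. For sinister $\sD$ it proves the map is a monomorphism (generation plus Riehl--Verity contractibility of $\Adj_\sD(E(f))$), and then gets surjectivity on $\pi_0$ not by extending $F$ by hand but by \emph{naturality}: apply $\zig_{+}^2$ to $F : X \to \tau_1\sD$ itself and compose with the ``choice of adjunction data'' map $p : \zig_{+}^2(\tau_1\sD) \to \sD$, whose existence comes from \Cref{crl:factorization-through-inclusion} (the map $\zig_{+}^2(\tau_1\sD) \to \zig_{+}^2(\sD)$ factors through the canonical inclusion $\sD \to \zig_{+}^2(\sD)$). This replaces all mate-compatibility checks with a single factorization statement, which is again proved using generation plus the coherence theorem for adjunctions. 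If you want to salvage your route, you would need either a genuine presentation of $\zig_{+}^2(\sC)$ by generators and relations in $\fCat_{(\infty,2)}$ (which the paper does not establish and explicitly avoids needing), or to reorganize the existence step along the paper's lines. Your deduction of $\zig_{+}^2([1]) \simeq \Adj$ from the universal property is fine once the adjunction is in place.
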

In the discussion following \Cref{crl:adj} we show why the above theorem also has a surprisingly geometric analogue using the well-known geometric formulation of the walking adjunction $\Adj$ from \cite{RV2016}. 

This zigzag construction can be extended to higher dimensions. Indeed, the functor $\zig_{+}^2$ is part of a family of functors $\zig_{+}^{n+1} : \fCat_{(\infty,n)} \to \fCat_{(\infty,n+1)}$ such that the $k$-morphisms of $\zig_{+}^{n+1}(\sC)$ can be similarly described as zigzags -- see \Cref{dfn:zigzag}. We show that each $\zig_{+}^{n+1}$ adds right adjoints:
\begin{mainthm}[{\Cref{prp:mainthm2}}]
	Fix $n \geq 1$. For any $\sC \in \fCat_{(\infty,n)}$ there is a map $\sC \to \zig_{+}^{n+1}(\sC)$ which, for $1 \leq k \leq n$, sends every $k$-morphism in $\sC$ to a $k$-morphism which has a right adjoint in $\zig_{+}^{n+1}(\sC)$.
\end{mainthm}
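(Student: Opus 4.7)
The plan is to adapt the explicit adjunction construction exhibited in the motivational pictures of the introduction, which handled the case $n = k = 1$, so that it produces, for each $1 \leq k \leq n$ and each $k$-morphism $f : x \to y$ of $\sC$, an explicit right adjoint $f'$ together with unit and counit $(k+1)$-morphisms in $\zig_{+}^{n+1}(\sC)$, after which the snake equations are verified by a horizontal-and-vertical composition argument.

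First I would construct the map $\sC \to \zig_{+}^{n+1}(\sC)$ and identify the image of a $k$-morphism $f$ with the length-$1$ zigzag in which $f$ appears in its original direction, so that its reverse length-$1$ zigzag is a candidate right adjoint $f'$ running from $y$ to $x$; both descriptions are legitimate by the combinatorial description of $k$-morphisms of $\zig_{+}^{n+1}(\sC)$ as zigzags (\Cref{dfn:zigzag}) and the horizontal decomposition (\Cref{prp:horiz-decomp}). Then I would produce the unit $\eta_f$ and counit $\varepsilon_f$ by copying the pictures of the introduction one categorical level up: $\eta_f$ is a length-$3$ zigzag of commutative $(k+1)$-cubes whose outer vertical $k$-faces are identities on $x$ and whose middle vertical $k$-face is $f$, giving a $(k+1)$-morphism from $\id(x)$ to $f' \circ_k f$, with $\varepsilon_f$ defined dually. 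The admissibility condition that the outer vertical legs be invertible is automatic since they are identities.

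The snake equations would then be verified by the same two-step argument shown pictorially in the introduction: vertical composition of the zigzag of zigzags produces a length-$3$ zigzag whose middle vertical face is an identity, and horizontal composition collapses it to $\id(f)$ (respectively $\id(f')$). This is legitimate because $\zig_{+}^{n+1}(\sC)$ is an $(\infty, n+1)$-category and so its $k$-compositions satisfy interchange, and because its vertical and horizontal composition of zigzags is given by concatenation, which is exactly the mechanism that makes the middle of the snake disappear.

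The main obstacle is interpreting the introductory pictures, which depict $2$-dimensional grids of $1$-cells and commutative squares, formally inside $\zig_{+}^{n+1}(\sC)$ when $k > 1$: the ``commutative squares'' must be upgraded to higher-dimensional commutative cubes sitting in the appropriate slice of the zigzag construction, and one must check that these cubes are legal generating $(k+1)$-morphisms under \Cref{dfn:zigzag}. A clean way to avoid a case-by-case combinatorial verification would be to prove a recursive identification: the hom $(\infty, n)$-category of $\zig_{+}^{n+1}(\sC)$ between two objects $x, y \in \sC$ contains a natural copy of $\zig_{+}^{n}(\Hom_{\sC}(x, y))$ as a sub-$(\infty, n)$-category. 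Granted such a fact, one reduces the problem to the $n = k = 1$ case already treated in Theorem~B by induction on $n$, applying it inside each iterated hom $(\infty, 1)$-category to produce the adjoints, units, and counits at every level $k$ uniformly.
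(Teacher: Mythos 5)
Your overall shape is right (length-one forward and reversed zigzags as the adjoint pair, units and counits built from two commutative squares/cubes with identity outer legs), but the step where you verify the snake equations has a genuine gap, and it is exactly the point the paper identifies as the technical hurdle. You justify the ``compose vertically, then horizontally'' manipulation by interchange in the $(\infty,n+1)$-category $\zig_{+}^{n+1}(\sC)$. However, the cells you would need to interchange --- the column-wise vertical composites of individual squares with non-identity vertical legs --- are not morphisms of $\zig_{+}^{n+1}(\sC)$ at all: in \Cref{dfn:zigzag} the Segalification $\LL$ in the top direction is applied only \emph{after} the globularization $R$ in the lower direction, so vertical composition is only defined for globular zigzags, and the columns of the snake diagram are not globular. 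This is spelled out in the remark following \Cref{prp:adj-in-zig}: the naive interchange argument from the introduction does not literally make sense inside the zigzagification. The paper's proof instead bypasses interchange by exhibiting an explicit element $\alpha$ of $(\LL_1 Z^2_+(X))_{1,2}$ (built from three $2$-simplices $\beta_1,\beta_2,\beta_3$ in the square double categories, where the non-globular composites do exist) whose three faces are $\id_2(f^\to)\circ_1\eta$, $\varepsilon\circ_1\id_2(f^\to)$ and $\id_2(f^\to)$; the same device, with $R^2_1\LL_1$ replaced by $R^{n+1}_n\LL_n$, is what makes \Cref{prp:mainthm2} work. Without something of this kind your snake-equation step does not go through.

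The reduction you propose is also not established: the ``recursive identification'' that $\Hom_{\zig_{+}^{n+1}(\sC)}(x,y)$ contains a natural copy of $\zig_{+}^{n}(\Hom_{\sC}(x,y))$ is a substantial claim about how zigzagification interacts with hom-categories, is nowhere proved in the paper, and is arguably comparable in difficulty to the statement you are trying to prove. The paper's route is different: for $k<n$ it reduces to the top-dimensional case using the truncation compatibility $\tau_k\zig_{+}^{n+1}(X)\simeq (R^k_k\LL_k\dotsm R^k_1\LL_1)(Z^k(\tau_k X))$ of \Cref{prp:truncation-zigzag}, and for $k=n$ it produces an $n$-dimensional companionship for $\alpha^\to$ in $\Sq^{n+1}(X)$ by using that the Gray tensor product is closed, so that $\Sq^{n+1}(X)_{\undl{1^{n-1}},\bullet,\bullet}\simeq \Sq^2(\tau_2 R_{\square(n-1)}(X))$ and the two-dimensional companionship result (\Cref{rmk:1-d-companions}) applies; the companionship data, which is globular to the appropriate heights, is then glued with its $\op_n$-reversed counterpart and fed into the explicit-simplex argument above. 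If you want to salvage your plan, you should either prove your hom-category claim or replace it and the interchange step by these two ingredients.
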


The problem of whether $\zig_{+}^{n+1}(\sC)$ is initial with respect to this property remains undecided, but we expect this partial result:
\begin{maincnj}[{\Cref{cnj:mainconj}}]
	Fix $n \geq 1$. For any $1 \leq k \leq n+1$, the space of $k$-morphisms of $\zig_{+}^{n+1}(\sC)$ is generated under composition by the $k$-morphisms of $\sC$ and the adjunction (co)units for lower-dimensional morphisms of $X$.
\end{maincnj}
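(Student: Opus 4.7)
The plan is to proceed by induction on $n$, with the $n=1$ case already essentially handled by \Cref{lmm:vert-decomp} together with \Cref{prp:horiz-decomp}. Indeed, for $k=1$, any $1$-morphism of $\zig_{+}^2(\sC)$ is by definition a horizontal composite of $1$-morphisms of $\sC$ and their formal right adjoints, the latter being the ``bent'' copies of morphisms built into the zigzag construction. For $k=2$, \Cref{lmm:vert-decomp} writes any $2$-morphism as a vertical composite of zigzags of commutative squares whose extreme vertical legs are invertible, and \Cref{prp:horiz-decomp} writes each such zigzag as a horizontal composite of individual squares. Each resulting atomic piece is either a commutative square of $\sC$ -- that is, a $2$-morphism of $\sC$ -- or a cap square with one invertible leg, which is precisely one of the generating units $\eta_f$ or counits $\varepsilon_f$ associated to the $1$-morphisms $f$ of $\sC$ (possibly precomposed with an invertible coming from $\sC$).

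For the inductive step, assume the statement for $\zig_{+}^{n}$ applied to any $(\infty,n-1)$-category, and fix an $(\infty,n)$-category $\sC$ together with a $k$-morphism $\alpha$ in $\zig_{+}^{n+1}(\sC)$ with $k \leq n+1$. The strategy is to proceed in two stages. First, establish an $(n{+}1)$-dimensional analogue of \Cref{lmm:vert-decomp}: express $\alpha$ as a composite along the top ($k$th) direction of a sequence of zigzag-shaped $k$-morphisms whose first and last faces are invertible. Second, establish an $(n{+}1)$-dimensional analogue of \Cref{prp:horiz-decomp}: each such zigzag decomposes along the $(k{-}1)$th direction into atomic pieces, which are either $k$-morphisms of $\sC$ or cap pieces realizing units/counits of $(k{-}1)$-morphisms. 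For the remaining compositional directions at lower levels, one invokes the inductive hypothesis, which expresses the lower-dimensional faces and boundary data as composites of cells of $\sC$ together with adjunction data for those lower-dimensional morphisms. Concatenating these decompositions yields the desired generation statement.

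The main obstacle is the combinatorial bookkeeping needed to make the two higher-dimensional decomposition lemmas fit together coherently. In dimension two, a $2$-morphism is a single sheet between two zigzags of $1$-morphisms, and the invertibility conditions live on only two boundary legs. In higher dimensions, a $k$-morphism is a zigzag in several directions at once, and the invertibility conditions decorate many boundary faces, with constraints dictated by the interchange law. Organizing these constraints so that the horizontal and vertical decompositions can be performed without interfering with each other, and so that the cap pieces can be uniformly identified with genuine unit/counit generators, will likely require an explicit description of the cell complex of admissible zigzag shapes and a careful secondary induction over its dimension, generalizing the combinatorial content of \Cref{prp:horiz-decomp}. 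This is, in essence, what keeps the full conjecture from reducing formally to the results already proved.
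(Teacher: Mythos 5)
This statement is labeled a \emph{conjecture} in the paper (\Cref{cnj:mainconj}), not a theorem, and the paper deliberately leaves it unproven. The authors' own discussion in \Cref{sec:extensions} offers only ``evidence from low dimensional cases and some wishful thinking,'' introduces an auxiliary unproven conjecture about generation of $(\undl{1^k},\undl{0^{n+1-k}})$-morphisms of $\Sq^{n+1}(X)$ by companionship data, and candidly concedes that ``we have little to no knowledge about the relations among adjunction (co)units in dimensions $n \geq 2$.'' So there is no paper proof to compare against; any attempt to present a proof here is an attempt to settle an open problem.

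Your proposal is honest about this: you correctly identify the base case $n=1$ as covered by \Cref{lmm:vert-decomp} and \Cref{prp:horiz-decomp}, and you identify the right qualitative obstacle in higher dimensions (the combinatorics of multi-directional zigzag shapes, where interchange constraints and invertibility conditions on many boundary faces must be managed simultaneously). Your route differs somewhat from the paper's suggested one: you propose to decompose $\zig_{+}^{n+1}(X)$ directly, while the paper's sketch first aims at a universal property of $\Sq^{n+1}(X)$ (via an auxiliary companionship-generation conjecture) and then glues the companionships across different signatures $\undl{b}$. Both routes founder on essentially the same missing bookkeeping.

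Two more concrete cautions about the induction you propose. First, the reduction of $k$-morphisms of $\zig_{+}^{n+1}(X)$ to a lower-dimensional zigzagification is not as clean as you suggest: by \Cref{prp:truncation-zigzag}, $\tau_k \zig_{+}^{n+1}(X) \simeq (R_k^k \LL_k \dotsb R_1^k \LL_1)(Z^k(\tau_k X))$ where $Z^k$ takes a colimit over the \emph{full} signature cube $\Lambda^k$, whereas $\zig_{+}^{k}$ is built from a colimit over $\Lambda^{k-1} \times \{+\}$. These are genuinely different constructions, so the inductive hypothesis ``the statement for $\zig_{+}^{n}$'' does not directly apply to a truncation of $\zig_{+}^{n+1}(X)$. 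Second, the proof of \Cref{prp:horiz-decomp} relies quite specifically on the decomposition $\simeq (\id_2(k^\to) \circ_1 E_h) \circ_2 (H_g \circ_1 \id_2(f^\to))$ of a single lax square (\Cref{rmk:E-H-decomp}), which in turn uses the known universal property of the double category of squares from \cite{LR2025}. In dimension $n+1$ the analogous decomposition of a lax $(n+1)$-cube into companionship data is exactly the auxiliary conjecture the paper raises and does not prove, so the step you call ``an $(n{+}1)$-dimensional analogue of \Cref{prp:horiz-decomp}'' is not merely bookkeeping: it presupposes a nontrivial result about $\Sq^{n+1}$ that is itself open.
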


\subsection{Future work}

Ultimately we hope to connect our zigzag construction with the cobordism hypothesis. Here we sketch our strategy and our hopes for future results. Consider $F^k$, the free $\EE_k$-algebra in spaces on a single (non-identity) generator. Using the stabilization hypothesis (see, for example, \cite[Proposition 10.11]{Haugseng2018}), $F^k$ induces an $(\infty,k)$-category $B^k F^k$ with a contractible space of $j$-morphisms for all $0 \leq j < k$. Then $\zig_{+}^{k+n}(B^k F^k)$ is an $(\infty,k+n)$- category with a contractible space of $j$-morphisms for all $0 \leq j < k$, and thus it induces an $\EE_k$-monoidal $(\infty,n)$-category $T^{k,n} := \Omega^k \zig_{+}^{k+n}(B^k F^k)$. Our results tell us that $T^{k,n}$ has left adjoints for all $j$-morphisms when $0 \leq j \leq n-1$, and \Cref{crl:ambidextrous} further ensures that those left adjoints are also right adjoints when $j \leq n-2$. This is a similar property to that of the \emph{oriented tangle category} $\Tang_{k,n}^{\orient}$, an $\EE_k$-monoidal $(\infty,n)$-category constructed in \cite{AF2017} whose $j$-morphisms are compact oriented $j$-manifolds with corners with an embedding in $\RR^{k+j}$.

\begin{maincnj}[{\Cref{cnj:secondcnj}}]
	For any $k, n, \geq 1$ there exists a map $T^{k,n} \to \Tang_{k,n}^{\orient}$ sending the generator $\ast \in F^k \sub T^{k,n}$ to the object $(\{0\} \hookrightarrow \RR^k) \in \Tang^\orient_{k,n}$.
\end{maincnj}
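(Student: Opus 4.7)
The plan is to chain together three universal properties: that of $F^k$ as the free $\EE_k$-algebra on one generator, that of $\zig_{+}^{k+n}$ as a free-right-adjoint-adder (Theorem B together with its conjectural higher-dimensional extension), and the $(\Omega^k \dashv B^k)$ adjunction between $\EE_k$-monoidal $(\infty,n)$-categories and pointed $(k-1)$-connected $(\infty,k+n)$-categories.

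Concretely, I would first use $(\Omega^k \dashv B^k)$ to rewrite the desired map $T^{k,n} \to \Tang_{k,n}^{\orient}$ as an $(\infty,k+n)$-functor $\zig_{+}^{k+n}(B^k F^k) \to B^k \Tang_{k,n}^{\orient}$. Next, assuming a higher-dimensional analogue of Theorem B — that $\zig_{+}^{k+n}$ is left adjoint to the functor which extracts the maximal sub-$(\infty,k+n)$-category in which every positive-dimensional morphism admits a right adjoint — this $(\infty,k+n)$-functor corresponds to a map $B^k F^k \to B^k \Tang_{k,n}^{\orient}$. For this step to make sense one must check that $B^k \Tang_{k,n}^{\orient}$ is right-adjointable in the relevant sense: its $j$-morphisms for $j < k$ are essentially identities and hence trivially admit right adjoints, while its $j$-morphisms for $k \leq j \leq k+n-1$ correspond to oriented $(j-k)$-manifolds, which admit duals via orientation reversal. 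Finally, by the universal property of $F^k$ combined with $(\Omega^k \dashv B^k)$, such a map is determined by a single point of $\iota_0 \Tang_{k,n}^{\orient}$, which we take to be $\{0\} \hookrightarrow \RR^k$. Unwinding the chain of equivalences confirms that the generator $\ast \in F^k$ is sent to the prescribed object.

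The hard part will be the second step: it relies on a higher-dimensional generalization of Theorem B that is not established in the paper, and of which Conjecture 1 is only a necessary ingredient. Without such a universal property, one would instead have to construct the functor by hand, assigning to each iterated zigzag of commutative squares in $B^k F^k$ an oriented tangle in $\Tang_{k,n}^{\orient}$, compatibly with all higher-categorical compositions and with the $\EE_k$-structure. This amounts to a coherent handle-decomposition statement for oriented tangles and appears to require substantial geometric input — precisely the sort of input that the strategy outlined in the introduction hopes to bypass. A secondary but more tractable point is verifying that the resulting functor is genuinely $\EE_k$-monoidal; this should follow formally once the extension step is in place, using functoriality of $\zig_{+}^{k+n}$ and the fact that $B^k$ and $\Omega^k$ are mutually inverse on the relevant subcategories.
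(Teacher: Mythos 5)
Your route is genuinely different from the paper's. The paper sketches a direct, geometric construction: model $F^k$ by the configuration space $\Conf(k)$ of points in $(0,1)^k$, realize each map $\alpha : \square^{k+j} \to B^k\Conf(k)$ (more generally, each morphism of each $\Sq^{k+j}_{\undl{b}}(B^k F^k)$) as an embedded compact oriented $j$-manifold with corners $M_a \sub [0,1]^j \times (0,1)^k$ with a formal orientation governed by the parity of $\sum_i b_i$, and then assemble these realizations into the desired functor. That is precisely the ``by hand'' fallback you mention but do not develop. Your universal-property chain -- freeness of $F^k$, the $\Omega^k \dashv B^k$ equivalence, and a conjectural higher-dimensional analogue of \Cref{thm:univ-prop} -- would give a cleaner abstract existence argument, but the gap in it is deeper than ``\Cref{cnj:mainconj} is only a necessary ingredient'': the paper records, right after \Cref{cnj:mainconj}, that even granting generation ``it remains a mystery if we can go the other way,'' since essentially nothing is known about the relations among adjunction (co)units in dimension $\geq 2$. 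The section-producing half of the would-be adjunction (the analogue of $p_\ast \circ \zig_+^2$ in the proof of \Cref{thm:univ-prop}) is therefore genuinely open, not merely unwritten. Two further observations. First, an abstract existence proof would say nothing about what the map actually does to a given zigzag of lax cubes, and the authors explicitly want a concrete realization so they can compare it with $\Tang^\orient_{k,n}$ and probe the tangle hypothesis; so even a complete universal-property argument would not substitute for the geometric one. Second, your worry that the by-hand construction requires ``precisely the sort of input that the strategy outlined in the introduction hopes to bypass'' slightly misreads that strategy: the aim is to bypass proving the universal property of $\Bord_n^{\mathrm{fr}}$ directly, not to avoid geometry when building a comparison map -- that map is expected to be geometric.
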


At the end of \Cref{sec:extensions} we give an idea of how this map can be constructed. Taking colimits as $k \to \infty$ we would obtain a map $\colim_k T^{k,n} \to \Bord_{n}^{\orient}$, the oriented cobordism category. We hope that we can use this map to get a better grasp on the latter $(\infty,n)$-category and on the oriented cobordism hypothesis -- in particular, we'd like to investigate how close this map is to being an equivalence.

\subsection{Organization of the paper and notation}

While our main results are framed in the language of $(\infty,n)$-categories, the technical work is done with higher Segal spaces; to get back the results one just has to start with a higher Segal space that is, in addition, complete. We start by recalling some categorical notions in \Cref{sec:segal}, mostly concerning the theory of higher Segal spaces. We prove a general formula that extracts a globular $n$-uple Segal space (equivalently, a $2$-fold Segal space) from any $n$-simplicial space, and we use that formula in \Cref{sec:squares} to construct the zigzagification functor $\zig_{+}^{n}$. In \Cref{sec:properties} we prove the universal property of $\zig_{+}^2$ by exploiting some results about generating spaces of $2$-morphisms. Finally, in \Cref{sec:extensions} we discuss the general case and talk about future work on the connection with the tangle hypothesis. 

We work mostly model independently. For each $n$ we fix an $\infty$-category $\fCat_{(\infty,n)}$ of $(\infty,n)$-categories and set $\Spaces := \fCat_{(\infty,0)}$, which we call the $\infty$-category of spaces. There is a truncation functor $\tau_m : \fCat_{(\infty,n)} \to \fCat_{(\infty,m)}$ for each $m \leq n$ which is a right adjoint to the canonical inclusion $\fCat_{(\infty,m)} \sub \fCat_{(\infty,n)}$. In any $(\infty,n)$-category we have composition operations $- \circ_k -$ for each $1 \leq k \leq n$ (where the subscript $k$ denotes that we are composing in the $k$th direction, i.e. along a shared $(k-1)$-morphism) and an identity operation $\id(-)$ which takes a $k$-morphism to its identity $(k+1)$-morphism. We use $\partial^0$ and $\partial^1$ to denote the source and target operations, respectively. We use $(-)^{\op_i} : \fCat_n \to \fCat_n$ to denote the functor switching the source and target of $i$-morphisms.

We denote by $\bD$ the (ordinary) category of simplices, whose objects $[t] \in \bD$ are the ordered sets ${0 < 1 < \dotsb < t}$ and whose morphisms are the order-preserving functions. This category comes with a canonical map $\bD \to \fCat_1$ and we tend to think of $[t]$ as its image under this map. An object of the product $\bD^n$ is denoted by $[\undl{t}]$, where $\undl{t} = (t_1, \dotsc, t_n)$ is an $n$-tuple of non-negative integers. Tuples of the form $(a, \dotsc, a)$ will be abbreviated as $\undl{a^n}$. We write $\bD^{n, \op}$ for $(\bD^{n})^{\op}$.

\subsection{Acknowledgements}

We kindly thank Chris Schommer-Pries, Stephan Stolz, Dan Freed, and Tomer Schlank for enlightening conversations about this work. The first author is generously supported by the Simons Foundation on Global Categorical Symmetries (SFI-MPS-GCS-00008528-09) and the Center for Mathematical Sciences and Applications.

\section{Categorical preliminaries} \label{sec:segal}

We start by recalling some of the theory of higher Segal spaces and their specializations all the way to complete globular $n$-uple Segal spaces. The only original results are in \Cref{sec:underlying} and serve a purely technical (though vital) role in the paper, so the readers who are familiar with higher Segal spaces should feel free to skip to the next section and refer back to the new results when needed.

\subsection{Higher Segal spaces}

Fix an $\infty$-category $\sA$ with finite limits. 

\begin{dfn}
	A \emph{Segal object in $\sA$} is a functor $X : \bD^\op \to \sA$ such that, for every $n$, the canonical projection map $X_n \to X_1 \times_{X_0} \dotsb \times_{X_0} X_1$ induced by the inert inclusions $[1] \hookrightarrow [n]$ is an equivalence; this is referred to as the \emph{Segal condition}. We denote by $\Seg(\sA)$ the full sub-$\infty$-category of $\Fun(\bD^\op, \sA)$ spanned by the Segal objects in $\sA$.
\end{dfn}

\begin{rmk}
	Limits in $\Seg(\sA)$ are computed pointwise. In particular, $\Seg(\sA)$ has all finite limits.
\end{rmk}

\begin{dfn}
	The $\infty$-category $\Seg^n(\sA)$ of \emph{$n$-uple Segal objects in $\sA$} is defined inductively as follows:
	\begin{itemize}
		\item $\Seg^0(\sA) := \sA$;
		\item $\Seg^1(\sA) := \Seg(\sA)$;
		\item $\Seg^{n}(\sA) := \Seg(\Seg^{n-1}(\sA))$ for $n \geq 2$.
	\end{itemize}
\end{dfn}

We will mostly concentrate on the case $\sA = \Spaces$, for which we now set up some terminology. An object $\Seg^n(\Spaces)$ is called an \emph{$n$-uple Segal space}. By unravelling the definition we see that an $n$-uple Segal space $X$ is an $n$-simplicial space $X : \bD^{n, \op} \to \Spaces$ satisfying the Segal condition in each variable independently. We call $X_{\undl{t}} := X([\undl{t}])$ the \emph{space of $\undl{t}$-morphisms} of $X$. It will be helpful to call \emph{elementary} those tuples $\undl{t}$ consisting only of $0$s and $1$s, and for elementary tuples their \emph{dimension} is the sum $\sum_{i} t_i$ (equivalently, the number of entries of $\undl{t}$ equal to $1$).
	
The Segal condition allows us to define composition operations among $\undl{t}$-morphisms in $X$. For $n = 1$, for example, we have a span
\begin{equation*}
	X_1 \times_{X_0} X_1 \xlongleftarrow{\simeq} X_2 \to X_1
\end{equation*} 
where the second map is induced by the active inclusion $[1] \cong \{0,2\} \hookrightarrow [2]$. We denote the resulting map $X_1 \times_{X_0} X_1 \to X_1$ by $- \circ_1 -$. Similarly, for $n \geq 2$ and any $i$ we have a span
\begin{equation*}
	X_{\bullet, \dotsc, 1, \dotsc \bullet} \times_{X_{\bullet, \dotsc, 0, \dotsc \bullet}} X_{\bullet, \dotsc, 1, \dotsc \bullet} \xlongleftarrow{\simeq} X_{\bullet, \dotsc, 2, \dotsc \bullet} \to X_{\bullet, \dotsc, 1, \dotsc \bullet}
\end{equation*}
induced by the same maps as before but in the $i$th factor of $\bD$, whose composite we denote by $- \circ_i -$. All these ``composition'' operations are appropriately unital and associative, and the associativity includes relations between compositions in different directions. The units are induced by the maps $\pi : [1] \to [0]$ in $\bD$; the operation of precomposing by $\pi$ in the $i$th factor is denoted by $\id_i(-)$ and increases the dimension of a morphism by $1$. We also have boundary operations, induced by the inclusions $\theta^0, \theta^1 : [0] \to [1]$; precomposing by $\theta^l$ in the $i$th factor is denoted by $\partial_i^l$, and this operation lowers the dimension by $1$. We call $\partial_i^0(f)$ and $\partial_i^1(f)$ the \emph{source and target of $f$ in the $i$th direction}, and we call $\id_i(f)$ the \emph{identity of $f$ in the $i$th direction}.

\subsection{Globularity}

We are going to introduce some extra conditions that one can impose on multisimplicial objects. Once again, we fix an $\infty$-category $\sA$ with finite limits.

\begin{lmm} \label{lmm:adj-lemma}
	Let $F : \sD \to \sE$ be a functor of $\infty$-categories with a right adjoint $G$, and let $\sD_0 \sub \sD$ and $\sE_0 \sub \sE$ be full sub-$\infty$-categories such that $F_0 := F \vert_{\sD_0}$ factors through $\sE_0$. If $G_0 := G\vert_{\sE_0}$ factors through $\sD_0$ then $G_0$ is a right adjoint of $F_0$.
\end{lmm}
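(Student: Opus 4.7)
The plan is to prove this by restricting the hom-space characterization of the adjunction $F \dashv G$ to the full sub-$\infty$-categories $\sD_0$ and $\sE_0$. Recall that $F \dashv G$ is equivalent to the existence of a natural equivalence
\begin{equation*}
	\Map_\sE(F(d), e) \simeq \Map_\sD(d, G(e))
\end{equation*}
of functors $\sD^\op \times \sE \to \Spaces$. I would exhibit $F_0 \dashv G_0$ by producing the analogous natural equivalence on $\sD_0^\op \times \sE_0$ and invoking the same characterization in the reverse direction.

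First I would observe that, since $\sD_0 \sub \sD$ and $\sE_0 \sub \sE$ are full, the inclusions induce equivalences on mapping spaces: $\Map_{\sD_0}(d, d') \simeq \Map_\sD(d, d')$ for $d, d' \in \sD_0$, and similarly in $\sE_0$. Next, the two hypotheses on $F_0$ and $G_0$ say precisely that for $d \in \sD_0$ and $e \in \sE_0$ we have $F_0(d) \simeq F(d) \in \sE_0$ and $G_0(e) \simeq G(e) \in \sD_0$. Combining these, I can restrict the ambient adjunction equivalence along $\sD_0^\op \times \sE_0 \hookrightarrow \sD^\op \times \sE$ to obtain
\begin{equation*}
	\Map_{\sE_0}(F_0(d), e) \simeq \Map_\sE(F(d), e) \simeq \Map_\sD(d, G(e)) \simeq \Map_{\sD_0}(d, G_0(e)),
\end{equation*}
natural in $d \in \sD_0^\op$ and $e \in \sE_0$. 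By the hom-space characterization of adjunctions, this identifies $G_0$ as a right adjoint of $F_0$.

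There is essentially no hard step: the proof reduces to chaining together fullness of the inclusions with the hypothesis that each restriction lands in the appropriate subcategory. If one preferred a unit/counit presentation, the same argument works by restricting the unit $\eta : \id_\sD \to GF$ to $\sD_0$ (the values $\eta_d$ for $d \in \sD_0$ are automatically morphisms in $\sD_0$ by fullness, since both $d$ and $GF(d) \simeq G_0 F_0(d)$ lie in $\sD_0$) and symmetrically restricting the counit; the triangle identities are then inherited from the ambient adjunction. Either way, the only thing to keep track of is that fullness is what makes the restricted mapping spaces, unit, and counit well-defined as data internal to $\sD_0$ and $\sE_0$.
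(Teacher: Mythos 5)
Your argument is correct and matches the paper's proof essentially verbatim: both restrict the natural equivalence $\Map_\sE(F(d), e) \simeq \Map_\sD(d, G(e))$ to $\sD_0^\op \times \sE_0$ and use fullness of the subcategories together with the hypotheses that $F_0$ and $G_0$ land in the right places. The unit/counit variant you mention at the end is a fine alternative but is not needed.
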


\begin{proof}
	Let $d \in \sD_0$ and $e \in \sE_0$. Then
	\begin{equation*}
		\sE_0(F_0(d), e) \simeq \sE(F_0(d), e) \simeq \sD(d, G(e)) \simeq \sD(d, G_0(e)) \simeq \sD_0(d, G_0(e))
	\end{equation*}
	where the last equivalence holds because, by assumption, $G_0(e) \in \sD_0$.
\end{proof}

\begin{prp} \label{prp:ev-adjoints}
	Let $i : \bD^{n-1,\op} \simeq \bD^{n-1,\op} \times \{0\} \hookrightarrow \bD^{n,\op}$ denote the functor $[\undl{t}] \mapsto [\undl{t},0]$. The evaluation functor $\ev_0 := i^\ast : \Fun(\bD^{n,\op}, \sA) \to \Fun(\bD^{n-1,\op},\sA)$ given by $X \mapsto X_{\bullet, \dotsc, \bullet, 0}$ admits a left and a right adjoint: 
	\begin{itemize}
		\item[(L)] its left adjoint $i_! : \Fun(\bD^{n-1,\op}, \sA) \to \Fun(\bD^{n,\op}, \sA)$ sends $X$ to the functor $[\undl{t},u] \mapsto X_{\undl{t}}$ which is constant in the last variable;
		\item[(R)] its right adjoint $i_\ast : \Fun(\bD^{n-1,\op}, \sA) \to \Fun(\bD^{n,\op},\sA)$ sends $X$ to the functor $[\undl{t},u] \mapsto (X_{\undl{t}})^{\times (u+1)}$, with projections as face maps and diagonals as degeneracy maps.
	\end{itemize}
	The left adjoint is fully faithful. Moreover, the evaluation functor restricts to $\ev_0 : \Seg^n(\sA) \to \Seg^{n-1}(\sA)$ and the adjunctions $i_! \dashv \ev_0 \dashv i_\ast$ restrict to adjunctions at the level of higher Segal objects.
\end{prp}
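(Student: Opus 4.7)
The plan is to recognize $\ev_0 = i^*$ as restriction along $i$ and to construct its adjoints by exploiting the finite combinatorial structure of $\bD$. For the left adjoint, the key observation is that, as ordinary $1$-categories, $i$ admits a right adjoint $r : \bD^{n,\op} \to \bD^{n-1,\op}$, $r[\undl{t}, u] := [\undl{t}]$; this is because $\Hom_\bD([u], [0])$ has a unique element for every $u$. The adjunction $i \dashv r$ passes to functor $\infty$-categories as $r^* \dashv i^* = \ev_0$, identifying the left adjoint as $i_! := r^*$ with explicit formula $(i_! X)_{\undl{t}, u} = X_{r[\undl{t}, u]} = X_{\undl{t}}$. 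Fully faithfulness of $i_!$ then follows because $\ev_0 \circ i_! \simeq \id$ is apparent from the formula, so the unit is an equivalence.

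For the right adjoint, this trick does not work since $i$ has no left adjoint in general (the set $\Hom_\bD([0], [u])$ has $u+1$ elements rather than one), so I construct $i_* X$ by hand, guided by the pointwise formula for right Kan extensions. The comma category $([\undl{t}, u]/i)$ has objects $([\undl{s}], f : [\undl{s}, 0] \to [\undl{t}, u])$ in $\bD^n$; the last coordinate $f_n: [0] \to [u]$ can take $u+1$ values, and morphisms in the comma category are constrained to preserve $f_n$. Consequently the comma category decomposes as a disjoint union of $u+1$ pieces, each admitting $([\undl{t}], (\id, f_n))$ as an initial object. Since limits over a disjoint union are products of limits and each component's limit is $X_{\undl{t}}$, the total limit -- a finite product, which exists in $\sA$ -- is $X_{\undl{t}}^{u+1}$; the face maps in the last coordinate become projections and the degeneracies become diagonals, matching (R). The adjunction $\ev_0 \dashv i_*$ is then established either directly via the Yoneda lemma or by the universal property of the Kan extension.

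To descend both adjunctions to Segal objects I appeal to \Cref{lmm:adj-lemma}, which requires checking that each of $\ev_0$, $i_!$, $i_*$ preserves the Segal condition. For $\ev_0 X = X_{\bullet, \dotsc, \bullet, 0}$, the Segal condition in each of the first $n-1$ directions is inherited from that of $X$ specialized to the last coordinate being $0$. For $i_! X$, being constant in the last variable, the Segal condition in that direction is trivially satisfied, and in the remaining directions it coincides with the Segal condition for $X$. For $i_* X = X_{\undl{t}}^{u+1}$, the Segal conditions in the first $n-1$ directions follow because finite products commute with all limits, while in the last direction the condition reduces to the standard identity $X_{\undl{t}}^{u+1} \simeq X_{\undl{t}}^2 \times_{X_{\undl{t}}} \dotsb \times_{X_{\undl{t}}} X_{\undl{t}}^2$.

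The step I expect to require the most care is the analysis of the comma category underlying $i_*$: correctly identifying how the $u+1$ components fit together and tracking variance between $\bD^n$ and $\bD^{n,\op}$ is finicky, and the explicit identification of each component as (an opposite of) a slice category with an initial object is where the whole computation hinges. The remaining verifications are routine manipulations with finite products and pullbacks.
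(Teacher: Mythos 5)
Your proof is correct and follows essentially the same route as the paper: both identify $i_\ast$ via the pointwise right Kan extension formula by observing that the comma category over $[\undl{t},u]$ has $u+1$ connected components, each with an initial object, yielding $X_{\undl{t}}^{u+1}$, and both then check that the explicit formulas preserve the Segal condition and invoke \Cref{lmm:adj-lemma}. Your derivation of the left adjoint via $i \dashv r$ and $i_! = r^\ast$ is a minor repackaging of the paper's observation that $[0]$ is initial in $\bD^{\op}$, so that $\ev_0$ is a limit functor whose left adjoint is the constant diagram; the two perspectives are equivalent.
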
 

\begin{proof}
    The object $[0] \in \bD^\op$ is initial, so $\ev_0$ is simply the functor which takes the limit in the last factor of $\bD$; therefore (L) follows. From the formula we see that the unit of $i_! \dashv \ev_0$ is an equivalence, implying that $i_!$ is fully faithful. For (R) we just have to show that we can right Kan extend along $i$: indeed, the pointwise formula for right Kan extensions gives us
    \begin{equation*}
        (\mathrm{RKan}_i(X))_{\undl{t},u} \simeq \lim_{\substack{[\undl{t},u] \to [\undl{s},0] \\ \text{in } \bD^\op}} X_{\undl{s}} \simeq \lim_{\substack{[0] \to [u] \\ \text{in } \bD}} X_{\undl{t}} \simeq X_{\undl{t}}^{u + 1},
    \end{equation*}
    where the limit exists since we assumed that $\sA$ had all finite limits.

    The formulas show that $\ev_0 X$, $i_! X$, and $i_\ast X$ satisfy the Segal conditions in all coordinates whenever $X$ does, so all these functors restrict appropriately. The last claim follows using \Cref{lmm:adj-lemma} since $\Seg^n(\sA)$ is a full sub-$\infty$-category of $\Fun(\bD^\op, \sA)$.
\end{proof}

Since the left adjoint $i_!$ is fully faithful we have a diagram of inclusions of full sub-$\infty$-categories:
\begin{equation*}
	\begin{tikzcd}[column sep = small]
		\sA \ar[r,hook] \ar[d,equal] & \Seg^1(\sA) \ar[r,hook] \ar[d,hook] & \Seg^2(\sA) \ar[r,hook] \ar[d,hook] & \dotsb \ar[r,hook] & \Seg^{n-1}(\sA) \ar[r,hook] \ar[d,hook] & \Seg^{n}(\sA) \ar[d,hook] \\
		\sA \ar[r, hook] & \Fun(\bD^{\op}, \sA) \ar[r,hook] & \Fun(\bD^{2,\op}, \sA) \ar[r,hook] & \dotsb \ar[r,hook] & \Fun(\bD^{n-1,\op}, \sA) \ar[r,hook] & \Fun(\bD^{n,\op}, \sA)
	\end{tikzcd}
\end{equation*}

\begin{dfn}
	We say that a functor $X : \bD^{k,\op} \to \sA$ is \emph{constant} if it belongs to the sub-$\infty$-category $\sA$, or equivalently if the canonical map $X_{0, \dotsc, 0} \to X_{\undl{t}}$ is an equivalence for every $[\undl{t}] \in \bD^k$.
\end{dfn}

\begin{dfn}
	Let $1 \leq k \leq n$. We say that $X \in \Fun(\bD^{n,\op}, \sA)$ is \emph{globular at height $k$} if, for every $[\undl{t}] \in \bD^{k-1}$, the $(n-k-1)$-simplicial object $X_{\undl{t},0,\undl{\bullet}}$ is constant. We say that $X$ is \emph{globular up to height $k$} if it globular at heights $1, 2, \dotsc, k$. Denote by $\Glb^{n,k}(\sA) \sub \Fun(\bD^{n,\op}, \sA)$ the full sub-$\infty$-category spanned by the $n$-simplicial objects which are globular up to height $k$ and by $\GlbSeg^{n,k}(\sA)$ the further sub-$\infty$-category spanned by the $n$-uple Segal objects which are globular up to height $k$. If $X \in \Glb^{n,n}(\sA)$ we just say that it is \emph{globular}.
\end{dfn}

\begin{rmk}
	If $X \in \Seg^n(\sA)$, to check if it is globular at height $k$ it is enough to verify the constancy condition for elementary tuples $\undl{t}$, i.e. those containing only $0$s and $1$s.
\end{rmk}

If $X$ is globular up to dimension $k$ then it is globular up to dimension $k-1$, so we have a diagram of inclusions
\begin{equation*}
	\begin{tikzcd}[column sep = small]
		\GlbSeg^{n,n}(\sA) \ar[r,hook, "="'] \ar[d, hook] & \GlbSeg^{n,n-1}(\sA) \ar[r,hook] \ar[d, hook] & \GlbSeg^{n,n-2}(\sA) \ar[r,hook] \ar[d, hook] & \dotsb \ar[r,hook] & \GlbSeg^{n,1}(\sA) \ar[r,hook] \ar[d, hook] & \Seg^n(\sA) \ar[d, hook] \\
		\Glb^{n,n}(\sA) \ar[r, hook, "="'] & \Glb^{n,n-1}(\sA) \ar[r, hook] & \Glb^{n,n-2}(\sA) \ar[r, hook] & \dotsb \ar[r, hook] & \Glb^{n,1}(\sA) \ar[r, hook] & \Fun(\bD^{n,\op}, \sA)
	\end{tikzcd}
\end{equation*}
Note also that every $n$-simplicial object is globular at height $n$, since the $0$-uple Segal object $X_{\undl{t},0}$, being just objects of $\sA$, are always constant. Thus the top left and bottom left horizontal maps in the diagram are equalities.

After unpacking the definition, we see that an $n$-uple Segal space which is globular up to height $k$ is determined by the data of
\begin{itemize}
	\item spaces $X_{\undl{0^n}}$, $X_{1,\undl{0^{n-1}}}$, $X_{1,1,\undl{0^{n-2}}}$, $\dotsc$, $X_{\undl{1^k},\undl{0^{n-k}}}$ of \emph{$k$-morphisms} for $0 \leq k \leq i$,
	\item an $(n-k)$-uple Segal space $X_{\undl{1^{k}}, \bullet, \dotsc, \bullet}$ whose $(0,\dotsc,0)$-morphisms are the $k$-morphisms of $X$,
	\item appropriately unital and associative composition operations.
\end{itemize}

It turns out that globular $n$-uple Segal objects are also known by a different name. Consider the $\infty$-category $S^n(\sA)$ of \emph{$n$-fold Segal objects in $\sA$} (see \cite[Definition 4.4]{Haugseng2018}) defined inductively as follows:
\begin{itemize}
	\item $S^0(\sA) := \sA$,
	\item $S^1(\sA) := \Seg(\sA)$,
	\item $S^n(\sA) \sub \Seg(S^{n-1}(\sA))$ is the full subcategory spanned by those functors $X : \bD^\op \to S^{n-1}(\sA)$ such that $X_0$ is constant, i.e. the map $X_{0, \undl{0^{n-1}}} \to X_{0, \undl{t}}$ is an equivalence for any $[\undl{t}] \in \bD^{n-1}$.
\end{itemize}

\begin{prp} \label{prp:glb-is-cat}
	For any $n \geq 0$ there is an equivalence between $\GlbSeg^{n,n}(\sA)$ and the $\infty$-category of $n$-fold Segal objects in $\sA$, and this equivalence commutes with their inclusion into $\Seg^n(\sA)$.
\end{prp}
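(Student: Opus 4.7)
The plan is to induct on $n$. The base cases $n=0$ and $n=1$ are immediate from the definitions: $\GlbSeg^{0,0}(\sA) = \sA = S^0(\sA)$ and $\GlbSeg^{1,1}(\sA) = \Seg(\sA) = S^1(\sA)$, and the compatibility with the inclusion into $\Seg^n(\sA)$ is tautological in these cases.

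For the inductive step, I would first invoke the curry/uncurry equivalence
\[
\Fun(\bD^{n,\op}, \sA) \simeq \Fun(\bD^\op, \Fun(\bD^{n-1,\op}, \sA)), \qquad X \longmapsto \tilde X, \quad \tilde X_t := X_{t, \bullet, \ldots, \bullet}.
\]
Since limits in functor categories are computed pointwise, $X \in \Seg^n(\sA)$ if and only if (i) every $\tilde X_t$ lies in $\Seg^{n-1}(\sA)$ (Segal in the last $n-1$ variables) and (ii) $\tilde X \colon \bD^\op \to \Seg^{n-1}(\sA)$ is itself a Segal object (Segal in the first variable). So the equivalence above restricts to an equivalence $\Seg^n(\sA) \simeq \Seg(\Seg^{n-1}(\sA))$.

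Next I would translate the globularity conditions across this equivalence. Globularity at height $1$ for $X$ asks that $X_{0,\bullet,\ldots,\bullet}$ be constant, which is exactly the condition that $\tilde X_0 \in \Seg^{n-1}(\sA)$ be constant; this matches the extra condition appearing in the definition of $S^n(\sA) \sub \Seg(S^{n-1}(\sA))$. For $2 \leq k \leq n$, globularity at height $k$ asks that $X_{t_1,\ldots,t_{k-1},0,\bullet,\ldots,\bullet}$ be constant for every $(t_1,\ldots,t_{k-1}) \in \bD^{k-1}$; under curry/uncurry this becomes the condition that $(\tilde X_{t_1})_{t_2,\ldots,t_{k-1},0,\bullet,\ldots,\bullet}$ be constant for all such tuples, i.e.\ $\tilde X_{t_1}$ is globular at height $k-1$. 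Quantifying over $2 \leq k \leq n$ and all $t_1$, the higher globularity conditions on $X$ are equivalent to each $\tilde X_t$ being globular up to height $n-1$.

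Combining these observations: $X \in \GlbSeg^{n,n}(\sA)$ if and only if $\tilde X \in \Seg(\Seg^{n-1}(\sA))$ with $\tilde X_0$ constant and $\tilde X_t \in \GlbSeg^{n-1,n-1}(\sA)$ for every $t$. By the inductive hypothesis $\GlbSeg^{n-1,n-1}(\sA) \simeq S^{n-1}(\sA)$, so this recovers precisely the defining conditions of $S^n(\sA)$ as a full subcategory of $\Seg(S^{n-1}(\sA))$. The equivalence is manifestly compatible with the forgetful functor to $\Seg^n(\sA)$ because every step is a restriction of the curry/uncurry equivalence to full subcategories. I expect no real obstacle, only the mildly tedious bookkeeping of matching the index $k$ in the definition of globularity at height $k$ for $X$ against globularity at height $k-1$ for each slice $\tilde X_t$; this is a clean unravelling once one notes that all the relevant subcategories are full and that the Segal and globularity conditions are imposed pointwise on elementary tuples.
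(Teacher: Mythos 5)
Your proof is correct and follows essentially the same route as the paper: both arguments induct on $n$, note that $\GlbSeg^{n,n}(\sA)$ and the $n$-fold Segal objects are full subcategories of $\Seg^n(\sA)$, and match the globularity conditions under currying (height $1$ of $X$ $\leftrightarrow$ constancy of $X_0$; heights $2,\dotsc,n$ of $X$ $\leftrightarrow$ globularity up to height $n-1$ of the slices), so that the two subcategories are carved out by the same inductive conditions. No gaps.
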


\begin{proof}
	Since both $\GlbSeg^{n,n}(\sA)$ and $S^n(\sA)$ are full sub-$\infty$-categories of $\Seg^n(\sA)$, it will be enough to show that $\GlbSeg^{n,n}(\sA)$ admits the same inductive definition as $S^n(\sA)$. The cases $n = 0$ and $n = 1$ are clear. For $n \geq 2$, an $X \in \Seg(\GlbSeg^{n-1,n-1}(\sA)) \sub \Fun(\bD^{n,\op}, \sA)$ has the property that the $(n-1)$-uple Segal object $X_1$ is globular at height $k$ for $1 \leq k \leq n-1$, which holds if and only if $X$ is globular at height $k$ for $2 \leq k \leq n$. By definition, $X_0$ is constant if and only if $X$ is globular at height $1$. Hence $X \in \GlbSeg^{n,n}(\sA)$ if and only if, as a Segal functor $X : \bD^{\op} \to \GlbSeg^{n-1,n-1}(\sA)$, the value $X_0$ is constant.
\end{proof}

We choose to avoid using the term ``$n$-fold Segal space'' in the rest of the paper, despite it being established in the literature. This is mostly because it can be easily confused with ``$n$-uple Segal space'' and because we feel that the adjective ``globular'' is a better descriptor for the condition satisfied by $n$-fold Segal spaces.

\subsection{Underlying globular objects} \label{sec:underlying}

In \cite[Proposition 4.12]{Haugseng2018} it is shown that the inclusion of $n$-fold Segal objects into $n$-uple Segal objects\footnote{A word of warning when checking the reference: Haugseng uses the notation $\Cat^n(\sA)$ for $n$-uple Segal objects and $\Seg^n(\sA)$ for $n$-fold Segal objects, while we use $\Seg^n(\sA)$ for the former and $\GlbSeg^{n,n}(\sA)$ for the latter.} admits a right adjoint $U^n$. Therefore, given \Cref{prp:glb-is-cat}, the inclusion $\GlbSeg^{n,n}(\sA) \sub \Seg^n(\sA)$ also admits a right adjoint. In this subsection we will produce a chain
\begin{equation*}
	\Fun(\bD^{n,\op},\sA) \to \Glb^{n,1}(\sA) \to \Glb^{n,2}(\sA) \to \dotsb \to \Glb^{n,n-1}(\sA) \to \Glb^{n,n}(\sA)
\end{equation*}
of right adjoints to the inclusions $\Glb^{n,k}(\sA) \sub \Glb^{n,k-1}$ whose composite is the right adjoint of the inclusion $\Glb^{n,n} \sub \Fun(\bD^{n,\op}, \sA)$; by \Cref{prp:glb-is-cat}, the latter is equivalent to the right adjoint $U^n$ of \cite{Haugseng2018}.

\begin{prp} \label{prp:rnk-adjoints}
	For every $n \geq 1$ and $1 \leq k \leq n$, the inclusion $\Glb^{n,k}(\sA) \sub \Glb^{n,k-1}(\sA)$ has a right adjoint $R_k^n : \Glb^{n,k-1}(\sA) \to \Glb^{n,k}(\sA)$.
\end{prp}

\begin{proof}
	We concentrate first on the case $k = 1$. The inclusion $\Glb^{n,1}(\sA) \sub \Seg^{n}(\sA)$ corresponds to that of those functors $\bD^{\op} \to \Seg^{n-1}(\sA)$ whose value at $0$ is constant, meaning we have a pullback square
	\begin{equation*}
		\begin{tikzcd}
			\Glb^{n,1}(\sA) \ar[r, hook] \ar[d, "{X \mapsto X_0}"'] & \Fun(\bD^{n,\op}, \sA) \ar[d, "{X \mapsto X_{0, \bullet, \dotsc, \bullet}}"] \\
			\sA \ar[r, hook] & \Fun(\bD^{n-1,\op}, \sA)
		\end{tikzcd}
	\end{equation*}
	The top map has a right adjoint $R_1^n$ by \cite[Lemma 4.14]{Haugseng2018}: indeed, the bottom map admits a right adjoint (\Cref{lmm:adj-lemma}), the right vertical map is a cartesian fibration by a slight modification of \cite[Lemma 4.15]{Haugseng2018}, and the diagram is a pullback.
	
	By induction on $n$ and $k$ we can assume that the inclusion $\Glb^{n-1,k-1}(\sA) \sub \Glb^{n-1,k-2}(\sA)$ admits a right adjoint $R_{k-1}^{n-1}$. The induced map $\Fun(\bD^\op, \Glb^{n-1,k-1}(\sA)) \to \Fun(\bD^\op, \Glb^{n-1,k-2}(\sA))$ admits a right adjoint $P$ sending $X : \bD^\op \to \Glb^{n-1,k-2}(\sA)$ to $R_{k-1}^{n-1}(X_\bullet) : \bD^\op \to \Glb^{n-1,k-1}(\sA)$ (\cite[Lemma 4.18]{Haugseng2018}). Thanks to \Cref{lmm:adj-lemma}, to show that $P$ restricts to a right adjoint
	\begin{equation*}
		R_{k}^n : \Glb^{n,k-1}(\sA) \to \Glb^{n,k}(\sA)
	\end{equation*}
	it will be enough to show that, for all $X : \bD^\op \to \Glb^{n-1,k-2}(\sA)$, if $X_0$ is constant then $P(X)_0$ is constant. But $P(X)_0 \simeq R^{n-1}_{k-1}(X_0) \simeq X_0$ because $X_0$ (being constant) is already globular up to height $k-1$, so $R^{n-1}_{k-1}$ fixes it; therefore $P(X)_0$ is constant and we're done.
\end{proof}

\begin{prp}
	The right adjoints $R_k^n : \Glb^{n,k-1}(\sA) \to \Glb^{n,k}(\sA)$ restrict to right adjoints $R_k^n : \GlbSeg^{n,k-1}(\sA) \to \GlbSeg^{n,k}(\sA)$ to the inclusion $\GlbSeg^{n,k}(\sA) \sub \GlbSeg^{n,k-1}(\sA)$.
\end{prp}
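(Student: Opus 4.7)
The plan is to invoke \Cref{lmm:adj-lemma}: since $\GlbSeg^{n,j}(\sA)$ sits as a full sub-$\infty$-category of $\Glb^{n,j}(\sA)$ for $j \in \{k-1, k\}$, and the adjunction at the globular level is already provided by \Cref{prp:rnk-adjoints}, the only content left to verify is that $R_k^n$ sends $n$-uple Segal objects in $\Glb^{n,k-1}(\sA)$ to $n$-uple Segal objects in $\Glb^{n,k}(\sA)$. I would prove this by induction on the pair $(n,k)$, mirroring the inductive construction of $R_k^n$ in the proof of \Cref{prp:rnk-adjoints}.

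For the inductive step $k \geq 2$: recall that by construction $R_k^n(X)_i \simeq R_{k-1}^{n-1}(X_i)$ in the first simplicial coordinate. The Segal conditions in coordinates $2, \dotsc, n$ reduce to showing that each slice $R_{k-1}^{n-1}(X_i)$ is an $(n-1)$-uple Segal object. For $i \geq 1$, the hypothesis that $X$ is Segal and globular up to height $k-1$ forces $X_i \in \GlbSeg^{n-1, k-2}(\sA)$, and the inductive hypothesis gives the result; for $i = 0$, globularity at height $1$ makes $X_0$ constant, hence trivially Segal, and left fixed by $R_{k-1}^{n-1}$ because $X_0$ is already globular up to height $k-1$. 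The Segal condition in coordinate $1$ follows because $R_{k-1}^{n-1}$, being a right adjoint, preserves the iterated fiber products that appear in the Segal diagram; applying it to the equivalence $X_t \simeq X_1 \times_{X_0} \dotsb \times_{X_0} X_1$ and evaluating at $(t_2, \dotsc, t_n)$ produces the desired equivalence for $R_k^n(X)$.

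The main obstacle will be the base case $k = 1$, where no levelwise description is available and we must work directly with the pullback construction. Here I would unwind the base change of the adjunction $\mathrm{const} \dashv \ev_{\undl{0^{n-1}}}$ together with the cartesian fibration structure on $X \mapsto X_{0,\bullet,\dotsc,\bullet}$ to obtain the explicit formula
\[
    R_1^n(X)_{t_1, \undl{t}} \simeq X_{t_1, \undl{t}} \times_{X_{0, \undl{t}}} X_{\undl{0^n}},
\]
where the map $X_{\undl{0^n}} \to X_{0,\undl{t}}$ comes from the unique morphism $\undl{0^{n-1}} \to \undl{t}$ in $\bD^{n-1,\op}$ and $X_{t_1,\undl{t}} \to X_{0,\undl{t}}$ is the face map in coordinate $1$. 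Granting this formula, the Segal condition of $R_1^n(X)$ in each coordinate follows from manipulating iterated pullbacks using that limits commute: in coordinate $1$, the Segal diagram simplifies via the identity $(B \times_A C)^{\times_C m} \simeq B^{\times_A m} \times_A C$, reducing the Segal map for $R_1^n(X)$ to $X$'s Segal map base-changed along $X_{\undl{0^n}} \to X_{0,\undl{t}}$; in the remaining coordinates, a similar but more bookkeeping-heavy manipulation exploits that the pullback defining $R_1^n(X)$ is compatible with the face and degeneracy structure of $X$. The technical heart of the proof is verifying the explicit formula for $R_1^n$ and tracking these iterated fiber products carefully enough to confirm all $n$ Segal conditions simultaneously.
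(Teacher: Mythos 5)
Your overall framework and the inductive step $k \geq 2$ match the paper's proof: invoke \Cref{lmm:adj-lemma}, write $R_k^n(X)_{s,\undl{t}} \simeq R_{k-1}^{n-1}(X_s)_{\undl{t}}$, get the Segal condition in coordinate $1$ from the fact that $R_{k-1}^{n-1}$ is a right adjoint and hence preserves the limits in the Segal diagram, and get the remaining coordinates from the inductive hypothesis. The difference is in the base case $k=1$, and here the paper is considerably slicker: it simply re-runs the abstract argument from \Cref{prp:rnk-adjoints} with Segal versions substituted everywhere. The constant-diagram functor $\sA \hookrightarrow \Fun(\bD^{n-1,\op},\sA)$ factors through $\Seg^{n-1}(\sA)$ (constant objects are trivially Segal), and the evaluation functor $\Seg^n(\sA) \to \Seg^{n-1}(\sA)$ at $[0]$ in the first coordinate is still a cartesian fibration by \cite[Lemma 4.15]{Haugseng2018}, so the same pullback square exists at the Segal level and \cite[Lemma 4.14]{Haugseng2018} produces the right adjoint directly—no pointwise formula, no by-hand verification of Segal conditions. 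That abstract route is worth recognizing because it is exactly what the preceding lemmas were set up for.

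Beyond being laborious, your proposed $k=1$ route also contains a concrete error: the explicit formula
\begin{equation*}
    R_1^n(X)_{t_1, \undl{t}} \simeq X_{t_1, \undl{t}} \times_{X_{0, \undl{t}}} X_{\undl{0^n}}
\end{equation*}
is wrong for $t_1 \geq 1$. Pulling back against a single face map $X_{t_1,\undl{t}} \to X_{0,\undl{t}}$ only constrains one of the $t_1 + 1$ vertices of a $(t_1,\undl{t})$-cell to be degenerate, whereas globularity at height $1$ requires \emph{all} of them to lie in $X_{\undl{0^n}}$. The correct formula (the $k=1$ analogue of \Cref{prp:ra-as-pullback}, combined with the description of the right adjoint $r$ in \Cref{prp:ev-adjoints}) is the pullback
\begin{equation*}
    R_1^n(X)_{t_1, \undl{t}} \simeq X_{t_1, \undl{t}} \times_{X_{0, \undl{t}}^{\times(t_1+1)}} X_{\undl{0^n}}^{\times(t_1+1)},
\end{equation*}
against the full $(t_1+1)$-fold product. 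You can recover this from $R_1^n(X) \simeq X \times_{r(eX)} r(\tau_0 X)$, where $e$ evaluates the first coordinate at $[0]$ and $r(Y)_{t_1,\undl{t}} \simeq Y_{\undl{t}}^{\times(t_1+1)}$. With the corrected formula your plan to check the Segal conditions via iterated pullback manipulations would go through, but the abstract argument the paper uses is both cleaner and avoids the need to verify any explicit formula.
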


\begin{proof}
	The case $k = 1$ holds for the same reasons as in \Cref{prp:rnk-adjoints} since the constant diagram map $\sA \hookrightarrow \Fun(\bD^{n-1},\sA)$ factors through $\Seg^{n-1}(\sA)$ and since the map $\Seg^n(\sA) \to \Seg^{n-1}(\sA)$ evaluating at $[0]$ on the \emph{first} coordinate is a cartesian fibration (again by \cite[Lemma 4.15]{Haugseng2018}). Assume by induction that $R_{k-1}^{n-1}(X)$ satisfies the Segal condition whenever $X$ does. Then, if $X \in \GlbSeg^{n,k-1}(\sA) \sub \Fun(\bD^\op, \GlbSeg^{n-1,k-2}(\sA))$ is written as a Segal presheaf $Y : \bD^\op \to \GlbSeg^{n-1,k-2}(\sA)$, we have that $R_k^n(X)_{s,\undl{t}} \simeq R_{k-1}^{n-1}(Y_s)_{\undl{t}}$; thus $R_k^n(X)$ satisfies the Segal condition in the first coordinate since $R_{k-1}^{n-1}$ preserves all limits and in the remaining coordinates by the induction hypothesis. Hence $R_k^n$ restricts to a functor $\GlbSeg^{n,k-1}(\sA) \to \GlbSeg^{n,k}(\sA)$ and the claim follows by \Cref{lmm:adj-lemma}.
\end{proof}

\begin{crl} \label{crl:ra-composition}
	The composite
	\begin{equation*}
		R^n := R_n^n R_{n-1}^n \dotsm R_{2}^n R_1^n : \Seg^n(\sA) \to \GlbSeg^{n,1}(\sA) \to \dotsb \to \GlbSeg^{n,n}(\sA)
	\end{equation*}
	is a right adjoint to the inclusion $\Glb^{n,n}(\sA) \sub \Seg^n(\sA)$, and it is equivalent to the right adjoint $U^n$ under the identification of \Cref{prp:glb-is-cat}.
\end{crl}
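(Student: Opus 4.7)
The plan is to assemble the tower of right adjoints provided by the two preceding propositions into a single adjunction, and then invoke uniqueness of right adjoints for the identification with Haugseng's $U^n$.

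First, I would remark that every $n$-uple Segal object is vacuously globular up to height $0$, so one can identify $\Seg^n(\sA) = \GlbSeg^{n,0}(\sA)$ and rewrite the chain of fully faithful inclusions as
\[
\GlbSeg^{n,n}(\sA) \hookrightarrow \GlbSeg^{n,n-1}(\sA) \hookrightarrow \cdots \hookrightarrow \GlbSeg^{n,1}(\sA) \hookrightarrow \GlbSeg^{n,0}(\sA) = \Seg^{n}(\sA).
\]
By the previous proposition, each inclusion admits the right adjoint $R_k^n$. Since the composite of right adjoints is right adjoint to the composite of left adjoints, the functor $R^n = R_n^n \circ R_{n-1}^n \circ \cdots \circ R_1^n$ is automatically right adjoint to the composite inclusion $\GlbSeg^{n,n}(\sA) \hookrightarrow \Seg^n(\sA)$, which is the first claim.

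For the comparison with $U^n$, I would invoke \Cref{prp:glb-is-cat} to identify the inclusion $\GlbSeg^{n,n}(\sA) \hookrightarrow \Seg^n(\sA)$ with the inclusion of $n$-fold Segal objects into $n$-uple Segal objects. Haugseng's $U^n$ is, by definition, right adjoint to precisely this inclusion, so by essential uniqueness of right adjoints there is a canonical equivalence $R^n \simeq U^n$ compatible with the two inclusions.

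The argument is entirely formal once the $R_k^n$'s have been constructed and once the identification $\GlbSeg^{n,n}(\sA) \simeq S^n(\sA)$ is in hand, so there is no real obstacle to overcome — all the technical content has already been dispatched in the preceding two propositions and in \Cref{prp:glb-is-cat}.
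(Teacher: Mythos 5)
Your proof is correct and matches the paper's intended argument: the corollary is stated without proof precisely because, as you observe, it follows formally from composing the tower of adjunctions $\GlbSeg^{n,k}(\sA) \hookrightarrow \GlbSeg^{n,k-1}(\sA)$ (with $\GlbSeg^{n,0}(\sA) = \Seg^n(\sA)$) and then invoking \Cref{prp:glb-is-cat} together with essential uniqueness of right adjoints to identify $R^n$ with Haugseng's $U^n$.
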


We also have a relatively explicit formula for $R_k^n(X)$ in terms of $X$ generalizing \cite[Remark 4.13]{Haugseng2018}.

\begin{dfn}
	Let $\tau_k : \Fun(\bD^{n,\op}, \sA) \to \Fun(\bD^{k,\op}, \sA)$ denote the functor which evaluates at $[0]$ in the last $n-k$ coordinates: $(\tau_k X)_{\undl{t}} \simeq X_{\undl{t},\undl{0}}$ for any $[\undl{t}] \in \bD^k$. We call $\tau_k X$ the \emph{truncation of $X$} to height $k$.
\end{dfn}

\begin{rmk}
	Notice that $\tau_k$ restricts to truncations functors
	\begin{equation*}
		\Seg^n(\sA) \to \Seg^k(\sA), \quad \Glb^{n,l}(\sA) \to \Glb^{k,l'}(\sA), \quad \GlbSeg^{n,l}(\sA) \to \GlbSeg^{k,l'}(\sA)
	\end{equation*}
	where $l' = l$ if $l \leq n-k$ and $l' = k$ if $l \geq n-k$.
\end{rmk}

\begin{prp} \label{prp:ra-as-pullback}
	Fix $2 \leq k \leq n+1$ and let $X \in \Glb^{n,k-1}(\sA)$. Denote by $e : \Fun(\bD^{n,\op}, \sA) \to \Fun(\bD^{n-1,\op}, \sA)$ the functor evaluating the $k$th coordinate at $[0]$, and let $r$ be its right adjoint. Then the commutative square
	\begin{equation*}
		\begin{tikzcd}
			R_k^n(X) \ar[r] \ar[d] & X \ar[d] \\
			r (\tau_{k-1} X) \ar[r] & r (e X)
		\end{tikzcd}
	\end{equation*}
	in $\Fun(\bD^{n,\op},\sA)$ is a pullback.
\end{prp}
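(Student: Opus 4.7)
My plan is to show that the pullback of the three other corners,
$$P := X \times_{r(eX)} r(\tau_{k-1} X),$$
lies in $\Glb^{n,k}(\sA)$ and satisfies the universal property of $R_k^n(X)$. Here I view the $(k-1)$-simplicial object $\tau_{k-1} X$ as an $(n-1)$-simplicial one via the constant-extension functor $\iota_!$, where $\iota : \bD^{k-1,\op} \hookrightarrow \bD^{n-1,\op}$ is the inclusion adjoining $[0]$'s in the last $n-k$ slots; the bottom map is then $r$ applied to the counit $\iota_! \tau_{k-1} X \simeq \iota_! \iota^\ast eX \to eX$ of $\iota_! \dashv \iota^\ast$.

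For globularity at height $k$, I would apply $e$ to the pullback square: since pullbacks are pointwise, $e$ preserves them, and the identity $e \circ r \simeq \id$ (coming from the formula $(rY)_{\undl{s}, u, \undl{v}} = Y_{\undl{s}, \undl{v}}^{u+1}$ of \Cref{prp:ev-adjoints} evaluated at $u = 0$) yields $eP \simeq \iota_! \tau_{k-1} X$, which is constant in the last $n-k$ coordinates. For heights $j < k$ I would compute pointwise at $(\undl{t}, 0, a_{j+1}, \ldots, a_n)$ with $\undl{t} \in \bD^{j-1}$: globularity of $X$ at height $j$ collapses all three pullback entries to $Z := X_{\undl{t}, 0, \undl{0}}$ (respectively $Z^{a_k+1}$), the two legs become the diagonal $Z \to Z^{a_k+1}$ and an equivalence, and the resulting pullback is just $Z$, independently of $(a_{j+1}, \ldots, a_n)$.

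For the universal property, given $W \in \Glb^{n,k}(\sA)$ I would use the pullback decomposition and the adjunction $e \dashv r$ to reduce $\Map(W, P) \simeq \Map(W, X)$ to showing that $\Map(eW, \iota_! \tau_{k-1} X) \to \Map(eW, eX)$ is an equivalence. Globularity of $W$ at height $k$ gives $eW \simeq \iota_! \tau_{k-1} W$; then the adjunction $\iota_! \dashv \iota^\ast$, together with the fully faithfulness of $\iota_!$ from \Cref{prp:ev-adjoints} (whence $\iota^\ast \iota_! \simeq \id$), identifies both mapping spaces with $\Map(\tau_{k-1} W, \tau_{k-1} X)$, and the induced comparison map becomes the identity. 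Combined with the universal property of $R_k^n$, this gives $P \simeq R_k^n(X)$ and proves that the square is a pullback.

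The main obstacle is the pointwise computation for heights $j < k$: neither $r(eX)$ nor $r(\tau_{k-1} X)$ is individually globular at those heights, because the $(a_k+1)$-fold powers introduced by $r$ in the $k$-th slot obstruct constancy in the $k$-th coordinate; restoring globularity requires the pullback with $X$ to collapse those powers through the diagonal, and this collapse succeeds precisely because $X$ is already globular up to height $k-1$.
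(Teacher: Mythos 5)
Your proposal is correct and follows essentially the same route as the paper: form $P = X \times_{r(eX)} r(\tau_{k-1}X)$, check that it lies in $\Glb^{n,k}(\sA)$, and then use the adjunctions $e \dashv r$ and $\iota_! \dashv \iota^\ast$ together with the identification $eW \simeq \iota_!\tau_{k-1}W$ for $W$ globular at height $k$ to get $\Map(W,P) \simeq \Map(W,X)$ naturally, whence $P \simeq R_k^n(X)$. You are in fact slightly more thorough than the paper, which verifies globularity of $P$ only at height $k$ and leaves the heights $j<k$ (your pointwise collapse of the $(a_k+1)$-fold powers through the diagonal, using globularity of $X$ up to height $k-1$) implicit.
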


\begin{proof}
	First we will show that $P := X \times_{r(e X)} r (\tau_{k-1} X) \in \Glb^{n,k}(\sA)$ by showing that $P_{\undl{u},0,\undl{v}} \simeq P_{\undl{u},\undl{0^{n-k}}}$ for any $\undl{u} \in \bD^{k-1}$ and $\undl{v} \in \bD^{n-k-1}$. A quick computation shows that
	\begin{equation*}
		(r(\tau_{k-1} X))_{\undl{u},0,\undl{v}} \simeq X_{\undl{u},\undl{0^{n-k}}}, \qquad (r(eX))_{\undl{u},0,\undl{v}} \simeq X_{\undl{u},0,\undl{v}}
	\end{equation*}
	and so
	\begin{equation*}
		P_{\undl{u},0,\undl{v}} \simeq X_{\undl{u},0,\undl{v}} \times_{X_{\undl{u},0,\undl{v}}} X_{\undl{u},\undl{0^{n-k}}} \simeq X_{\undl{u},\undl{0^{n-k}}} \simeq P_{\undl{u},\undl{0^{n-k}}}.
	\end{equation*}
	
	Now note that if $Y \in \Glb^{n,k}(\sA)$ then
	\begin{equation*}
		\Map(Y, r(\tau_{k-1} X)) \simeq \Map(e Y, \tau_{k-1} X) \simeq \Map(\tau_{k-1} Y, \tau_{k-1} X)
	\end{equation*}
	and
	\begin{equation*}
		\Map(Y, r(e X)) \simeq \Map(e Y, e X) \simeq \Map(\tau_{k-1} Y, e X) \simeq \Map(\tau_{k-1} Y, \tau_{k-1} X).
	\end{equation*}
	since $eY \simeq \tau_{k-1} Y$ by the globularity of $Y$ at height $k$. The equivalences are seen to commute with the map $r(\tau_{k-1} X) \to r(eX)$, so
	\begin{equation*}
		\Map(Y,P) \simeq \Map(Y,X) \times_{\Map(Y, r(eX))} \Map(Y, r(\tau_{k-1} X)) \simeq \Map(Y,X)
	\end{equation*}
	naturally in $Y \in \Glb^{n,k}(\sA)$, which implies that $P \simeq R_k^n(X)$.
\end{proof}

\subsection{Segalification} \label{sec:segalification}

From now on we will focus on the case $\sA = \Spaces$ which is central to the aim of this paper; nevertheless, most of what will appear can be generalized to the case of an arbitrary $\infty$-topos.

Recall that since $\Seg(\Spaces)$ is obtained from the presheaf $\infty$-category $\Fun(\bD^{\op}, \Spaces)$ by localizing at the Segal maps, $\Seg(\Spaces)$ is a \emph{reflective sub-$\infty$-category} of $\Fun(\bD^\op, \Spaces)$, meaning that there is a left adjoint $\LL : \Fun(\bD^\op, \Spaces) \to \Seg(\Spaces)$ to the inclusion. We call $\LL$ the \emph{Segalification functor}. The inclusion $\Seg^n(\Spaces) \sub \Fun(\bD^{n,\op}, \Spaces)$ is also reflective and so comes with its own Segalification functor $\LL : \Fun(\bD^{n,\op}, \Spaces) \to \Seg^n(\Spaces)$.

\begin{dfn}
	The \emph{Segalification functor in the $k$th coordinate} is the functor $\LL_k : \Fun(\bD^{n,\op}, \Spaces) \to \Fun(\bD^{n-1,\op}, \Seg(\Spaces))$ defined by
	\begin{equation*}
		\Fun(\bD^{n,\op}, \Spaces) \simeq \Fun(\bD^{n-1,\op}, \Fun(\bD^\op, \Spaces)) \xlongrightarrow{\LL \circ -} \Fun(\bD^{n-1,\op}, \Spaces)
	\end{equation*}
	where the first equivalence moves the $k$th copy of $\bD^\op$ to the right-hand side of $\Fun(-,-)$.
\end{dfn}

Note that we can apply $\LL_k$ iteratively, one $k$ at a time, to obtain a functor $\Fun(\bD^{n,\op}, \Spaces) \to \Fun(\bD^{n,\op}, \Spaces)$. This functor is \emph{not equivalent} to $\LL : \Fun(\bD^{n,\op}, \Spaces) \to \Fun(\bD^{n,\op}, \Spaces)$. Indeed, for any $k \neq l$, $\LL_l \LL_k X$ is Segal in the $l$th coordinate but is not guaranteed to be Segal in the $k$th coordinate -- this is essentially because $\LL_l$ might not preserve finite limits. There is a partial fix for this:
\begin{fct}[{\cite[Lemma 2.15]{BS2024}}]
	Assume that $X \in \Glb^{n,n}(\Spaces)$ satisfies the Segal condition in the first $k-1$ coordinates. Then $\LL_k X$ remains globular (i.e. $\LL_k X \in \Glb^{n,n}(\Spaces)$) and satisfies the Segal condition in the first $k$ coordinates.
\end{fct}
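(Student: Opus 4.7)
The plan is to split the conclusion into three assertions: (a) $\LL_k X$ remains globular, (b) $\LL_k X$ satisfies the Segal condition in the $k$th coordinate, and (c) $\LL_k X$ satisfies the Segal condition in the first $k-1$ coordinates. Part (b) is immediate from the very construction of $\LL_k$, which Segalifies $X$ pointwise in the $k$th coordinate. Parts (a) and (c) both exploit the explicit formula $(\LL_k X)_{t_1, \dotsc, t_n} \simeq \LL(X_{t_1, \dotsc, t_{k-1}, \bullet, t_{k+1}, \dotsc, t_n})_{t_k}$ together with two elementary observations about the ordinary Segalification $\LL$: a constant simplicial space is already Segal, so $\LL$ fixes it, and $\LL$ preserves the space of $0$-simplices, i.e. $\LL(Y)_0 \simeq Y_0$.

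For (a) I would verify globularity of $\LL_k X$ at each height $j$ by case analysis on whether $j < k$, $j = k$, or $j > k$. When $j < k$, globularity of $X$ at height $j$ makes the simplicial space $X_{\dotsc, 0_j, \dotsc, \bullet_k, \dotsc}$ already constant in the $k$th coordinate, so applying $\LL$ has no effect. When $j = k$, the identity $\LL(Y)_0 \simeq Y_0$ reduces the question back to the globularity of $X$ at height $k$. When $j > k$, globularity of $X$ at height $j$ gives an equivalence of simplicial spaces in the $k$th direction $X_{\dotsc, 0_j, \dotsc, \undl{s}} \to X_{\dotsc, 0_j, \dotsc, \undl{0}}$, which $\LL$ sends to an equivalence since it is a functor.

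For (c), I would fix a coordinate $j < k$ and freeze all entries other than the $j$th and $k$th, reducing to a bisimplicial space $Y_{a,b}$ in which $Y_{a,\bullet}$ is a simplicial space in the $k$th direction. The hypothesis that $X$ is Segal in coordinate $j$ gives $Y_{m,\bullet} \simeq Y_{1,\bullet} \times_{Y_{0,\bullet}}^{m} Y_{1,\bullet}$ as simplicial spaces, while globularity of $X$ at height $j$ shows that $Y_{0,\bullet}$ is a \emph{constant} simplicial space. The Segal condition for $\LL_k X$ in coordinate $j$ then reduces to showing that $\LL$ preserves this particular pullback. I expect this to be the main obstacle: $\LL$ is a left adjoint, so it preserves colimits but not arbitrary finite limits, and the whole point of the fact is to isolate the hypothesis under which the relevant limits \emph{are} preserved. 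The technical lemma I would aim for is that $\LL$ preserves pullbacks whose base is a constant simplicial space. To establish it I would try to express $\LL$ as a transfinite iteration of a ``free composite'' pushout construction in the style of Rezk, verifying stage-by-stage that the construction commutes with base change along a constant; intuitively, each freely added composite sits in the fibers of the map to the constant base, so the iteration remains compatible with pullback along it.
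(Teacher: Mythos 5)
Your decomposition into (a) globularity, (b) Segal in the $k$th coordinate, (c) Segal in coordinates $j<k$ matches how the paper (following \cite{BS2024}) organizes the argument, and parts (a) and (b) are correct. For (a), note that the paper's own written-out generalization (\Cref{prp:fix-segalification}) only treats heights $j\leq k-1$ because it works under the weaker hypothesis $X\in\Glb^{n,k-1}(\Spaces)$; your case analysis for $j=k$ (via $\LL(Y)_0\simeq Y_0$) and for $j>k$ (via functoriality of $\LL$ applied to a levelwise equivalence of simplicial spaces) is exactly what is additionally needed to get all the way to $\Glb^{n,n}(\Spaces)$, and both cases are sound.

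The gap is in (c). You have correctly isolated the key technical point --- that one must commute $\LL$ past the Segal-condition pullbacks $Y_{1,\bullet}\times_{Y_{0,\bullet}}\dotsb\times_{Y_{0,\bullet}}Y_{1,\bullet}$, and that the base $Y_{0,\bullet}$ being constant (by globularity at height $j$) is the hypothesis that makes this feasible --- but the lemma ``$\LL$ preserves pullbacks over a constant base'' is stated without proof, and the proposed route to it (expressing $\LL$ as a transfinite iteration of pushouts and checking stage-by-stage compatibility with base change) is not developed and is genuinely delicate: $\LL$ is a reflective localization, not a cellular/transfinite construction by definition, and no explicit transfinite presentation of it is given in the literature you could lean on, so there is no clear inductive structure to carry the base-change argument through. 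What \cite{BS2024} (and the paper, in \Cref{prp:fix-segalification}) uses instead is a \emph{single explicit colimit formula} for $\LL$ over the necklace category $\bN^\op$ (\Cref{fct:formula-segalification}) together with the fact (\Cref{fct:sifted-colim}, \cite[Lemmas 1.27, 2.14]{BS2024}) that a colimit over $\bN^\op$ of Segal spaces with constant $0$-th level is again Segal. That second fact is a siftedness-type statement for $\bN^\op$ relative to a constant base, and it is precisely the rigorous substitute for the pullback-preservation lemma you conjecture. Your intuition about ``freely added composites living in the fibers over the constant base'' is the right heuristic, but making it precise without the necklace formula and its siftedness-relative-to-a-constant property is where the argument would stall.
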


Thus if we Segalify a globular presheaf one coordinate at a time we don't have to worry about the Segal condition breaking at any point. The fix can be strengthened to presheaves that are globular only up to height $k$; in fact, the following result was the main drive to develop the definition of $\Glb^{n,k}(-)$ in the first place.

\begin{prp} \label{prp:fix-segalification}
	Assume $X \in \Glb^{n,k-1}(\Spaces)$ satisfies the Segal condition in the first $k-1$ coordinates. Then $\LL_k X$ remains in $\Glb^{n,k-1}(\Spaces)$ and satisfies the Segal condition in the first $k$ coordinates.
\end{prp}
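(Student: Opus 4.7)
The plan is to mirror the proof of the cited fact \cite[Lemma 2.15]{BS2024} while tracking which globularity hypotheses actually enter the argument. The key structural observation is that $\LL_k$ acts slicewise in the $n-1$ coordinates other than the $k$th: for each fixed $(\undl{u},\undl{v}) \in \bD^{k-1,\op}\times \bD^{n-k,\op}$, one has $(\LL_k X)_{\undl{u},\bullet,\undl{v}} \simeq \LL(X_{\undl{u},\bullet,\undl{v}})$, where the right-hand side is the Segalification of a single simplicial space in the $k$th variable.

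I would first verify preservation of globularity at each height $j \leq k-1$. For $\undl{t} \in \bD^{j-1}$, globularity of $X$ at height $j$ says that the $(n-j)$-simplicial space $X_{\undl{t},0,\undl{\bullet}}$ is constant. Since $j < k$, the $k$th variable lies among the bullet indices, so after fixing the remaining bullet indices the slice in the $k$th variable is a constant simplicial space, hence already Segal; then $\LL$ acts as the identity on it, and $(\LL_k X)_{\undl{t},0,\undl{\bullet}} \simeq X_{\undl{t},0,\undl{\bullet}}$ remains constant. This gives $\LL_k X \in \Glb^{n,k-1}(\Spaces)$.

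Next, I would verify the Segal condition for $\LL_k X$ in each coordinate $i \leq k-1$ (the Segal condition in the $k$th coordinate is immediate from the construction of $\LL_k$). The Segal condition on $X$ in the $i$th coordinate provides, for every $m$ and every tuple of the remaining $n-1$ indices, an equivalence of simplicial spaces in the $k$th variable exhibiting $X_{\undl{t}_{<i},m,\undl{t}_{>i}}$ as an iterated pullback of $X_{\undl{t}_{<i},1,\undl{t}_{>i}}$'s over the common base $X_{\undl{t}_{<i},0,\undl{t}_{>i}}$. By globularity of $X$ at height $i$ — available precisely because $i \leq k-1$ — this base is a constant simplicial space in the $k$th variable. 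Applying $\LL$ and invoking the key fact that Segalification preserves pullbacks whose base is a constant simplicial space, together with the observation that $\LL$ fixes that constant base, yields the desired Segal condition for $\LL_k X$ in the $i$th coordinate upon evaluating in the $k$th variable.

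The main obstacle is the key lemma that Segalification preserves pullbacks over constant simplicial spaces; since $\LL$ is not left exact this requires a separate argument. It can be proved by observing that a constant simplicial space $c = \mathrm{const}(c_0)$ induces an equivalence of slice $\infty$-categories $\Fun(\bD^\op,\Spaces)_{/c} \simeq \Fun(\bD^\op,\Spaces_{/c_0})$ that identifies the Segal conditions on the two sides (since the forgetful functor $\Spaces_{/c_0} \to \Spaces$ preserves connected limits) and transports the relevant pullback over $c$ to an honest pullback in $\Fun(\bD^\op,\Spaces_{/c_0})$, where $\LL$ is well-behaved. This is the technical heart of \cite[Lemma 2.15]{BS2024}, and the point of our strengthening is simply that the argument never appeals to globularity of $X$ at heights $\geq k$, so the weaker hypothesis $X \in \Glb^{n,k-1}(\Spaces)$ suffices.
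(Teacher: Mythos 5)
Your globularity step is essentially the paper's own argument and is fine. For the Segal conditions in coordinates $i<k$, however, your proof hinges on a key lemma that you do not actually prove: that $\LL$ preserves (iterated) pullbacks over a \emph{constant} simplicial space. The slice identification $\Fun(\bD^\op,\Spaces)_{/c}\simeq\Fun(\bD^\op,\Spaces_{/c_0})$ only rephrases the pullback over $c$ as a finite product of presheaves valued in the slice topos; the phrase ``where $\LL$ is well-behaved'' is precisely the point at issue, since Segalification is a non-left-exact localization and there is no a priori reason it should commute with finite products there, nor that Segalification of $\Spaces_{/c_0}$-valued presheaves is computed on underlying spaces. Moreover, attributing this statement to \cite[Lemma 2.15]{BS2024} is not accurate: that lemma (and its proof) never isolates such a pullback-preservation claim. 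To close the gap you would invoke the necklace formula (\Cref{fct:formula-segalification}) -- $\Map(N_1\vee\dotsb\vee N_t,-)$ carries your pullback over the constant base to a pullback over $c_0$, since necklaces are weakly contractible -- and then commute the $\bN^{t,\op}$-colimit with this finite limit using that $\bN^\op$ is sifted and sifted colimits commute with finite products in the topos $\Spaces_{/c_0}$; these are exactly the Barkan--Steinebrunner inputs.

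For comparison, the paper's proof avoids any product-preservation statement: it applies the necklace formula directly to $(\LL_k X)_{\undl{t},\bullet,\undl{u},1,\undl{v}}$, exhibiting it as an $\bN^\op$-colimit of iterated pullbacks of slices of $X$, each Segal in the $i$th coordinate, whose value at $\bullet=0$ is constant by globularity at height $i\leq k-1$; \Cref{fct:sifted-colim} then gives the Segal condition (the case of general index in the $k$th coordinate being handled by the already-established Segal condition in that coordinate). Once you supply the argument sketched above, your route becomes a legitimate and arguably tidy repackaging built from the same ingredients -- but as written, the decisive step is asserted rather than proved.
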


An immediate consequence is:

\begin{crl} \label{crl:alternate-segalification}
	If $X \in \Fun(\bD^{n,\op}, \Spaces)$ then
	\begin{equation*}
		(R_n^n \circ \LL_n \circ R_{n-1}^n \circ \LL_{n-1} \circ \dotsb \circ R_1^n \circ \LL_1) (X)
	\end{equation*}
	is a globular $n$-uple Segal space.
\end{crl}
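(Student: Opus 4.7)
The plan is to prove a more refined inductive statement: for each $0 \leq k \leq n$, the partial composite
\begin{equation*}
	F_k(X) := (R_k^n \circ \LL_k \circ \dotsb \circ R_1^n \circ \LL_1)(X)
\end{equation*}
lies in $\Glb^{n,k}(\Spaces)$ and is Segal in its first $k$ coordinates (reading $F_0$ as the identity). Taking $k = n$ then gives an object of $\GlbSeg^{n,n}(\Spaces)$, which by \Cref{prp:glb-is-cat} is exactly a globular $n$-uple Segal space.

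The base case $k = 0$ is vacuous, since $\Glb^{n,0}(\Spaces) = \Fun(\bD^{n,\op}, \Spaces)$ and the Segal condition in zero coordinates is trivial. For the inductive step, suppose $Y := F_{k-1}(X) \in \Glb^{n,k-1}(\Spaces)$ is Segal in its first $k-1$ coordinates. Then \Cref{prp:fix-segalification} directly implies that $\LL_k Y$ is still in $\Glb^{n,k-1}(\Spaces)$ and is now Segal in its first $k$ coordinates. Applying $R_k^n$ lands in $\Glb^{n,k}(\Spaces)$ by \Cref{prp:rnk-adjoints}, so the only remaining task is to verify that the Segal conditions in the first $k$ coordinates are preserved by $R_k^n$.

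To handle this I would invoke the explicit pullback description of \Cref{prp:ra-as-pullback} (or, for the edge case $k = 1$, the analogous pullback square appearing in the proof of \Cref{prp:rnk-adjoints}), which writes $R_k^n(\LL_k Y)$ as a pullback built from $\LL_k Y$, $r(e\LL_k Y)$, and $r(\tau_{k-1} \LL_k Y)$. The functor $r$ of \Cref{prp:ev-adjoints} is defined by iterated products in the $k$th coordinate, so it tautologically satisfies the Segal condition in that coordinate and, being defined coordinate-wise as a limit in the others, preserves the Segal condition in all remaining coordinates. The functors $e$ and $\tau_{k-1}$ (being evaluations at $[0]$) also preserve Segal conditions in the surviving coordinates. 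Since Segal conditions are themselves limit conditions and pullbacks commute with limits, the pullback $R_k^n(\LL_k Y)$ inherits the Segal conditions in the first $k$ coordinates, closing the induction.

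The main concern is bookkeeping the edge case $k = 1$, since \Cref{prp:ra-as-pullback} is stated only for $k \geq 2$; however, the pullback square from the proof of \Cref{prp:rnk-adjoints} for $k = 1$ plays the same role (with a constant functor in place of $r(\tau_{k-1} \LL_k Y)$) and yields the Segal condition in the first coordinate by the same argument. Beyond that, the proof is a direct tracking through the induction of which coordinates become Segal and which stay globular at each step.
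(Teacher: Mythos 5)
Your proof is correct, and it makes explicit a step that the paper treats as immediate. The paper states the corollary as "an immediate consequence" of \Cref{prp:fix-segalification}, but that proposition only controls the behavior of $\LL_k$; the missing ingredient---that $R_k^n$ preserves the partial Segal conditions in the first $k$ coordinates---is exactly what you supply via the pullback description of \Cref{prp:ra-as-pullback} together with the observation that $r$ is Segal-preserving (tautologically Segal in the inserted coordinate, and a finite limit in the rest) and that pullbacks commute with the Segal limits. Your handling of the $k=1$ edge case is a bit informally phrased (the square in the proof of \Cref{prp:rnk-adjoints} is a pullback of $\infty$-categories, not an objectwise pullback formula, so you need to first extract the pointwise formula for $R_1^n(X)$ from it), but the underlying idea---a constant extension of $\tau_0 X$ takes the place of $\tau_{k-1} X$---is sound and gives the same conclusion. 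One small nit: you do not need \Cref{prp:glb-is-cat} at the end, since $\GlbSeg^{n,n}(\Spaces)$ is by definition the $\infty$-category of globular $n$-uple Segal spaces; that proposition only identifies it with $n$-fold Segal spaces.
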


The proof of \Cref{prp:fix-segalification} is a straightforward modification of the proof of \cite[Lemma 2.15]{BS2024}; we will write it down anyway, both for ease of reference and because we will make use of the intermediary \Cref{fct:formula-segalification} in later parts of the paper.

\begin{dfn}
	The \emph{wedge} $A \vee B$ of two bipointed categories $(A, a_0, a_1)$ and $(B, b_0, b_1)$ is the bipointed category $(A \sqcup_{a_1 = b_0} B, a_0, b_1)$. A \emph{necklace} is an iterated wedge $N = [n_1] \vee [n_2] \vee \dotsb \vee [n_k]$ where each representable $[n]$ carries the bipointing $(0,n)$. We denote by $\bN$ the category whose objects are necklaces and whose morphisms are maps of bipointed categories.
\end{dfn}

\begin{fct}[{\cite[Observation 1.16, Proposition 1.17]{BS2024}}] \label{fct:formula-segalification}
	The Segalification functor $\LL : \Fun(\bD^\op, \Spaces) \to \Seg(\Spaces)$ can be computed pointwise as
	\begin{equation*}
		(\LL X)_t \simeq \colim_{(N_1, \dotsc, N_t) \in \bN^{n, \op}} \Map(N_1 \vee N_2 \vee \dotsb \vee N_t, X).
	\end{equation*}
	In particular, $\LL(X)_0 \simeq X_0$ and the space of $1$-morphisms of $\LL X$ is
	\begin{equation*}
		(\LL X)_1 \simeq \colim_{N \in \bN^\op} \Map(N, X) \simeq \colim_{[n_1] \vee \dotsb \vee [n_k]} X_{n_1} \times_{X_0} \dotsb \times_{X_0} X_{n_k} 
	\end{equation*}
	and so any $1$-morphism of $\LL X$ can be written (possibly non-uniquely) as a formal composition of points of $X_1$.
\end{fct}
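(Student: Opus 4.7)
The plan is to prove the formula by realizing the Segalification $\LL$ as a reflective localization at an explicit class of maps---the spine inclusions of necklaces---and exhibiting an explicit presheaf satisfying the resulting universal property. First, I would identify $\Seg(\Spaces) \sub \Fun(\bD^\op, \Spaces)$ as the full sub-$\infty$-category of presheaves $Y$ such that $\Map(N, Y) \to \Map(\mathrm{Sp}(N), Y)$ is an equivalence for every necklace $N$, where $\mathrm{Sp}(N) = [1]^{\vee m}$ denotes the spine of $N$ (with $m$ the total number of edges of $N$). For $N = [n]$ this recovers the ordinary Segal condition, and for a general wedge of representables locality follows by iterating the Segal condition at each bead; consequently $\LL$ is the reflective localization at the saturated class generated by these spine inclusions.

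Next, I would define a candidate $\tilde{\LL}$ by the claimed colimit formula, with functoriality in $[t] \in \bD^\op$ inherited from the functoriality of wedging of bipointed categories in each variable. A natural unit map $X \to \tilde{\LL}X$ at each level $t$ arises from the distinguished object $([1], \dotsc, [1]) \in \bN^{t,\op}$, for which $N_1 \vee \dotsb \vee N_t = [t]$ and so $\Map([t], X) = X_t$. The crux of the proof is verifying that $\tilde{\LL}X$ itself satisfies the Segal condition: since each wedge is a pushout of bipointed categories along endpoints, the mapping space decomposes as
\begin{equation*}
	\Map(N_1 \vee \dotsb \vee N_t, X) \simeq \Map(N_1, X) \times_{X_0} \dotsb \times_{X_0} \Map(N_t, X),
\end{equation*}
and commuting this fiber product with the colimit over $\bN^{t,\op} \simeq (\bN^\op)^t$ produces precisely the Segal decomposition of $\tilde{\LL}(X)_t$ in terms of $(\tilde{\LL}X)_1$ and $(\tilde{\LL}X)_0 \simeq X_0$.

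Finally, to conclude $\tilde{\LL}X \simeq \LL X$, I would check that the unit $X \to \tilde{\LL}X$ is initial among maps from $X$ into Segal spaces. For any Segal $Y$, locality against the necklace spine maps implies that $\Map(N, Y)$ depends only on the total edge count of $N$, and one further Segal collapse identifies $\Map(N_1 \vee \dotsb \vee N_t, Y) \simeq Y_t$ independently of the choice of the $N_i$; the colimit therefore collapses to a point in each tuple of indices and yields $\Map(\tilde{\LL}X, Y) \simeq \Map(X, Y)$. The main obstacle I foresee is the commutation of the colimit over $\bN^{t,\op}$ with the iterated fiber product over $X_0$ in the Segal verification: this is not automatic and requires either a siftedness-type property for $\bN^\op$ or an explicit cofinality/reindexing argument in the style of Dugger--Spivak, which is the content of the cited \cite[Proposition 1.17]{BS2024} and which I would expect to adapt essentially verbatim, using that $\bN^\op$ is closed under the pushout-along-endpoints operation that encodes wedging.
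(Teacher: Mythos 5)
This statement is imported into the paper as a \emph{Fact}, with no proof beyond the citation to \cite[Observation 1.16, Proposition 1.17]{BS2024}, so there is no internal argument to compare against; your outline is a reasonable reconstruction of the kind of argument the citation contains (localize at spine inclusions, define a candidate by the necklace colimit, check Segality, check the universal property). However, as a standalone proof it has real gaps, and your diagnosis of where the difficulty sits is off. Deferring the ``main obstacle'' to \cite[Proposition 1.17]{BS2024} ``essentially verbatim'' is circular here, since that proposition \emph{is} the statement being proved. Moreover, the step you flag as the obstacle is actually the easy one: in the Segal-condition check the index category is the product $(\bN^{\op})^{\times t}$, the $i$th factor $\Map(N_i,X)$ depends only on the $i$th coordinate, and the base of the iterated fiber product is the \emph{constant} space $X_0$; so the commutation follows from universality (descent) of colimits in $\Spaces$, applied one coordinate at a time, with no siftedness or cofinality input needed. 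Siftedness of $\bN^{\op}$ is needed elsewhere (it is exactly the content of the other imported fact, \Cref{fct:sifted-colim}, where all factors vary with the \emph{same} necklace variable).

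The genuine gaps are the two steps you treat lightly. First, functoriality in $[t]$: the simplicial structure on $t \mapsto \colim_{(N_1,\dotsc,N_t)}\Map(N_1\vee\dotsb\vee N_t,X)$ (inner faces concatenate adjacent necklaces, outer faces restrict along the inclusion of a sub-wedge, degeneracies insert trivial beads) must be constructed coherently as a functor $\bD^{\op}\to\Spaces$; ``inherited from the functoriality of wedging'' is not an argument at the $\infty$-categorical level, and this is where a nontrivial construction is required. Second, the universal property: from the levelwise observation that $\Map(N,Y)\simeq Y_{|N|}$ for Segal $Y$ you cannot directly conclude $\Map(\widetilde{\LL}X,Y)\simeq\Map(X,Y)$, because $\widetilde{\LL}X$ is not presented as a colimit of one fixed diagram of simplicial spaces, so the ``collapse'' does not immediately compute the mapping space out of it. One must either verify a localization-recognition criterion (that $\eta_{\widetilde{\LL}X}$ and $\widetilde{\LL}(\eta_X)$ are equivalences, and that $\eta_Y$ is an equivalence for Segal $Y$), or exhibit the unit $X\to\widetilde{\LL}X$ as lying in the strongly saturated class generated by the spine inclusions. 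Even the levelwise collapse $\colim_{N\in\bN^{\op}}\Map(N,Y)\simeq Y_1$ is not for free: $[1]$ is neither initial nor terminal in $\bN^{\op}$, so you need that the diagram inverts all maps \emph{and} that $\bN^{\op}$ is weakly contractible (e.g.\ sifted) --- which is precisely the input you would still have to prove rather than borrow.
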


\begin{fct}[{\cite[Lemmas 1.27, 2.14]{BS2024}}] \label{fct:sifted-colim}
	Suppose $F : \bN^\op \to \Seg(\Spaces)$ is a functor such that $F(-)_0 : \bN^\op \to \Spaces$ is constant. Then $\displaystyle \colim_{N \in \bN^\op} F(N)$ is a Segal space. 
\end{fct}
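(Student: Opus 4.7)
The plan is to show that the pointwise colimit $G := \colim_{\bN^\op} F$ computed in $\Fun(\bD^\op, \Spaces)$ already satisfies the Segal condition. Since the inclusion $\Seg(\Spaces) \hookrightarrow \Fun(\bD^\op, \Spaces)$ admits the Segalification functor $\LL$ as a left adjoint, a pointwise colimit that happens to be Segal automatically coincides with the colimit computed in $\Seg(\Spaces)$. Let $S := F(-)_0$ denote the constant value. Pointwise colimits give $G_t \simeq \colim_N F(N)_t$ for each $t$; assuming the siftedness of $\bN^\op$ to be proved below (from which nonemptiness and weak contractibility follow), we obtain $G_0 \simeq \colim_N S \simeq S$. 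Since each $F(N)$ is Segal, $F(N)_t \simeq F(N)_1 \times_S \cdots \times_S F(N)_1$ for $t \geq 2$, hence
\begin{equation*}
    G_t \simeq \colim_{N \in \bN^\op} \bigl( F(N)_1 \times_S \cdots \times_S F(N)_1 \bigr),
\end{equation*}
while the target of the Segal map is
\begin{equation*}
    G_1 \times_{G_0} \cdots \times_{G_0} G_1 \simeq \bigl( \colim_N F(N)_1 \bigr) \times_S \cdots \times_S \bigl( \colim_N F(N)_1 \bigr).
\end{equation*}
The Segal condition for $G$ is therefore the assertion that the functor $\colim_{\bN^\op} : \Fun(\bN^\op, \Spaces_{/S}) \to \Spaces_{/S}$ commutes with finite products in the slice $\infty$-category $\Spaces_{/S}$ (where finite products are fiber products over $S$). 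This is by definition the statement that $\bN^\op$ is a \emph{sifted} $\infty$-category.

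The problem thus reduces to the main obstacle: verifying combinatorially that $\bN^\op$ is sifted. One needs to show that the diagonal $\bN^\op \to \bN^\op \times \bN^\op$ is cofinal, i.e.\ that for every pair $(N_1, N_2)$ of necklaces the $\infty$-category $\bN_{N_1, N_2}$ of triples $(N, N_1 \to N, N_2 \to N)$ is weakly contractible. The strategy I would pursue is to identify the wedge $N_1 \vee N_2$, equipped with the canonical source inclusion $N_1 \hookrightarrow N_1 \vee N_2$ and the canonical target inclusion $N_2 \hookrightarrow N_1 \vee N_2$ along the joining vertex, as an initial object of (a cofinal sub-$\infty$-category of) $\bN_{N_1, N_2}$. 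Concretely, any triple $(N, f\colon N_1 \to N, g\colon N_2 \to N)$ factors through a chain $N_1 \vee N_2 \to N' \to N$ where $N'$ amalgamates the images of $f$ and $g$; one then verifies by explicit poset-theoretic arguments on necklace maps that the $\infty$-category of such factorizations is contractible. This combinatorial content is essentially that of \cite[Lemma~1.27]{BS2024}.

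Granting siftedness of $\bN^\op$, the computation in the first paragraph shows that $G \in \Seg(\Spaces)$, and by the adjunction argument $G$ coincides with $\colim_{\bN^\op} F$ taken inside $\Seg(\Spaces)$, proving the claim.
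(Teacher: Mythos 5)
The paper does not prove this statement itself: it is quoted from \cite{BS2024}, where (in the notation there) Lemma 2.14 is essentially the reduction you perform and Lemma 1.27 supplies the combinatorial input. Your first half reconstructs that reduction correctly: compute the colimit pointwise, use weak contractibility of $\bN^{\op}$ to get $G_0 \simeq S$, rewrite both sides of the Segal map, and invoke commutation of $\bN^{\op}$-colimits with iterated fiber products over the constant base $S$. One small imprecision there: this commutation is not ``by definition'' siftedness --- the definition concerns finite products in $\Spaces$ (equivalently cofinality of the diagonal); to pass to fiber products over $S$ you need the standard extra observation that colimits in $\Spaces$ are universal (equivalently, that $\Spaces_{/S}$ is an $\infty$-topos, in which sifted colimits again commute with finite products). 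That is a one-line fix.

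The genuine gap is exactly at the step you flag as the main obstacle, the siftedness of $\bN^{\op}$: it is not proved, and the sketch offered would fail as stated. Cofinality of the diagonal $\bN^{\op} \to \bN^{\op} \times \bN^{\op}$ requires weak contractibility of the category whose objects are necklaces $N$ with maps $N_1 \to N$ and $N_2 \to N$ \emph{in} $\bN^{\op}$, i.e.\ spans $N_1 \leftarrow N \to N_2$ in $\bN$. Your proposed initial object, the wedge $N_1 \vee N_2$ with the two endpoint inclusions, is not an object of this (or any) relevant comma category: the inclusion $N_1 \hookrightarrow N_1 \vee N_2$ sends the terminal basepoint of $N_1$ to the joining vertex rather than to the terminal basepoint of the wedge, so it is not a map of bipointed categories and hence not a morphism of $\bN$ (likewise for $N_2$). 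The object that does live in the correct comma category is the wedge equipped with the two collapse maps $N_1 \vee N_2 \to N_i$, but it is neither initial nor terminal there: take $N_1 = N_2 = [1]$ and $N = [2]$ mapping to $N_1$ by collapsing $\{0,1\}$ and to $N_2$ by collapsing $\{1,2\}$; any bipointed map $[2] \to [1] \vee [1] \cong [2]$ (or in the other direction) compatible with the collapses would have to send the middle vertex to $0$ and to $2$ simultaneously. So contractibility of these span categories cannot be obtained by exhibiting the wedge as an initial object, and the hedge ``a cofinal sub-$\infty$-category'' is precisely the missing content. As it stands your argument defers the combinatorial heart to \cite[Lemma 1.27]{BS2024} --- which is legitimate as a citation (it is what the paper does), but it means the proposal is not a proof, and the concrete strategy sketched for that lemma is misdirected.
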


\begin{proof}[{Proof of \Cref{prp:fix-segalification}}]
	If $i \leq k-2$ and $[\undl{t}] \in \bD^i$ then $(\LL_k X)_{\undl{t},0,\undl{\bullet}} \simeq \LL_{k-i-1} (X_{\undl{t},0,\undl{\bullet}})$; but $X_{\undl{t},0,\undl{\bullet}}$ is a constant $(n-i-1)$-simplicial space (since $X$ is globular up to height $k-1$) and hence an $(n-i-1)$-uple Segal space, so it is fixed under Segalification. Therefore $(\LL_k X)_{\undl{t},0,\undl{\bullet}} \simeq X_{\undl{t},0,\undl{\bullet}} \simeq X_{\undl{t}, \undl{0^{n-i}}} \simeq (\LL_k X)_{\undl{t},\undl{0^{n-i}}}$ and so $\LL_k X$ is also globular up to height $k-1$.
	
	By construction $\LL_k X$ is Segal in the $k$th coordinate. To prove it remains Segal in the $i$th coordinate, for $i < k$, we just have to show that $(\LL_k X)_{\undl{t},\bullet,\undl{u},l,\undl{v}}$ is a Segal space when $l = 0, 1$, where we have fixed $[\undl{t}] \in \bD^{i-1}$, $[\undl{u}] \in \bD^{k-i-1}$, and $[\undl{v}] \in \bD^{n-k}$. For $l = 0$ there is nothing to do: $\LL_k$ fixes the space of objects (\Cref{fct:formula-segalification}) and so $(\LL_k X)_{\undl{t},\bullet,\undl{u},0,\undl{v}} \simeq X_{\undl{t},\bullet,\undl{u},0,\undl{v}}$ is clearly Segal. For $l = 1$, \Cref{fct:formula-segalification} gives us
	\begin{equation*}
		(\LL_k X)_{\undl{t},\bullet,\undl{u},1,\undl{v}} \simeq \colim_{[n_1] \vee \dotsb \vee [n_j] \in \bN^\op} X_{\undl{t},\bullet,\undl{u},n_1,\undl{v}} \times_{X_{\undl{t},\bullet,\undl{u},0,\undl{v}}} \dotsb \times_{X_{\undl{t},\bullet,\undl{u},0,\undl{v}}} X_{\undl{t},\bullet,\undl{u},n_j,\undl{v}}
	\end{equation*}
	as presheaves on $\bD$. We can now apply \Cref{fct:sifted-colim}: each $X_{\undl{t},\bullet,\undl{u},l,\undl{v}}$ is Segal by assumption (as $i < k$) and the Segal condition is preserved under pullbacks, so each $X_{\undl{t},\bullet,\undl{u},n_1,\undl{v}} \times_{X_{\undl{t},\bullet,\undl{u},0,\undl{v}}} \dotsb \times_{X_{\undl{t},\bullet,\undl{u},0,\undl{v}}} X_{\undl{t},\bullet,\undl{u},n_j,\undl{v}}$ is a Segal space; moreover, plugging $\bullet = 0$ yields
	\begin{equation*}
		X_{\undl{t},0,\undl{u},n_1,\undl{v}} \times_{X_{\undl{t},0,\undl{u},0,\undl{v}}} \dotsb \times_{X_{\undl{t},0,\undl{u},0,\undl{v}}} X_{\undl{t},0,\undl{u},n_j,\undl{v}} \simeq X_{\undl{t}, \undl{0^{n-i}}} \times_{X_{\undl{t}, \undl{0^{n-i}}}} \dotsb \times_{X_{\undl{t}, \undl{0^{n-i}}}} X_{\undl{t}, \undl{0^{n-i}}} \simeq X_{\undl{t}, \undl{0^{n-i}}}
	\end{equation*}
	since $X$ is globular at height $i$ for $i \leq k-1$. The fact implies that the colimit is again a Segal space, so we're done.
\end{proof}

\subsection{A brief comment on completeness} \label{sec:completeness}

There is an additional condition that one can impose on globular $n$-uple Segal spaces, called \emph{completeness}, which we won't dwell much on. They key property we need is that the full sub-$\infty$-category of $\GlbSeg^{n,n}(\Spaces)$ spanned by the complete objects is a model of the $\infty$-category of $(\infty,n)$-categories; this was proven for $n = 1$ in \cite{JT2007} and for arbitrary $n$ in \cite{Barwick2005}. In other words, we have a fully faithful functor $\fCat_{(\infty,n)} \hookrightarrow \GlbSeg^{n,n}(\Spaces)$ with a left adjoint $C : \GlbSeg^{n,n}(\Spaces) \to \fCat_{(\infty,n)}$ called the \emph{completion functor}. This is good news for us since the $(\infty,n)$-category we are interested in should have a mapping-out universal property, so we can reasonably construct it as $C(X)$ for some $X \in \GlbSeg^{n,n}(\Spaces)$. Then, instead of studying $C(X)$ closely, we can study the space $\Map(C(X), \sD)$ of functors into an $(\infty,n)$-category $\sD$ by studying the (possibly simpler) equivalent space $\Map(X, \sD)$.

In light of this equivalence, from now on we will use the term ``$(\infty,n)$-category'' to mean ``complete globular $n$-uple Segal space''. We will also treat every strict $n$-category as an $(\infty,n)$-category and thus as a globular $n$-uple Segal space.

\begin{rmk} \label{rmk:walking-morphisms}
	There is a functor $w : \bD^n \to \fCat_{n}$ which sends $[\undl{t}]$ to the \emph{walking $\undl{t}$-cell}. It is uniquely determined by the following three properties:
	\begin{enumerate}[label=(\alph*)]
		\item $w(k) := w([1, \dotsc, 1, 0, \dotsc, 0])$ is the walking $k$-morphism;
		\item if $\undl{t} = (\undl{r},0,\undl{s})$ then $w(\undl{t}) = w(\undl{r}, \undl{0})$;
		\item $w$ is co-Segal: it sends the Segal maps to pushouts. 
	\end{enumerate}
	It can be verified that the composite $w : \bD^n \to \fCat_{n} \sub \fCat_{(\infty,n)} \to \GlbSeg^{n,n}(\Spaces)$ satisfies
	\begin{equation*}
		\Map(w(\undl{t}), X) \simeq \Map([\undl{t}], X) \simeq X_{\undl{t}}
	\end{equation*}
    for any globular $n$-uple Segal space $X$.
\end{rmk} 

\subsection{Adjoint morphisms}

We conclude this section with a few definitions.

\begin{dfn} \label{dfn:adj}
	Denote by $\Adj$ the universal $2$-category containing an adjunction $f \dashv g$ (see, for example, \cite{RV2016}). Explicitly, $\Adj$ is generated by two objects $x, y$, two $1$-morphisms $f : x \to y$, $g : y \to x$, and two $2$-morphism $\eta : \id(x) \Rightarrow gf$, $\varepsilon : fg \Rightarrow \id(y)$ satisfying the snake equations
	\begin{equation*}
		(\varepsilon \circ_1 \id(f)) \circ_2 (\id(f) \circ_1 \eta) = \id(f), \qquad (\id(g) \circ_1 \varepsilon) \circ_2 (\eta \circ_1 \id(g)) = \id(g).
	\end{equation*}
	As usual, we call $f$ the \emph{left adjoint}, $g$ the \emph{right adjoint}, $\eta$ the \emph{unit}, and $\varepsilon$ the \emph{counit}. An \emph{adjunction of $1$-morphisms} in an $(\infty,n)$-category $\sC$ is a map $\Adj \to \sC$. 
\end{dfn}

\begin{dfn}
	Let $\sC$ be an $(\infty,n)$-category and let $1 \leq k \leq n-1$. An \emph{adjunction of $k$-morphisms} in $\sC$ is an adjunction of $1$-morphisms in $\mathrm{Mor}^{k-1}(\sC)$, the $(\infty,n-k+1)$-category of $(k-1)$-morphisms in $\sC$. If $X \in \GlbSeg^{nn,}(\Spaces)$ has completion $\mathscr{X}$, an \emph{adjunction of $k$-morphisms} in $X$ is an adjunction of $k$-morphisms in $\mathscr{X}$. 
\end{dfn}

\begin{rmk}
	If $X$ is not complete we can still compute the adjunctions in it without first completing it: they are given as in \Cref{dfn:adj} except that the snake equations only have to hold up to an invertible higher morphism. If $X$ is complete then higher invertible morphisms are equivalent to identities, so the problem of distinguishing the two definitions does not arise.
\end{rmk}

We borrow the following terminology from \cite{DPP2003}:

\begin{dfn}
	Let $X \in \GlbSeg^{n,n}(\Spaces)$. We say that $X$ is \emph{k-sinister} if every $k$-morphism of $X$ extends to an adjunction of $k$-morphisms in $X$ for which it is a left adjoint. We say that $X$ is \emph{sinister} if it is $k$-sinister for all $k = 1, \dotsc, n-1$.
\end{dfn}

\section{Squares and zigzags} \label{sec:squares}

In this section we will define our main object of interest, a functor $\zig_{+}^{n+1} : \GlbSeg^{n,n}(\Spaces) \to \GlbSeg^{n+1,n+1}(\Spaces)$ such that the morphisms of $\zig_{+}^{n+1}(X)$ are certain zigzag diagrams in $X$. First we will have to introduce an auxiliary functor $\Sq^n : \GlbSeg^{n,n}(\Spaces) \to \Seg^{n}(\Spaces)$, the \emph{higher square functor}, which we then modify a bit to obtain $\zig_{+}^{n+1}$.

\subsection{Lax cubes} \label{sec:lax-cubes}

We borrow the following result from \cite[Theorem A]{Campion2023}:
\begin{prp} \label{prp:lax-gray-cubes}
	For any $n \geq 1$ there is a functor $- \otimes - : \fCat_{(\infty,n)} \times \fCat_{(\infty,n)} \to \fCat_{(\infty,n)}$ which turns $\fCat_{(\infty,n)}$ into a closed monoidal $\infty$-category with unit $[0]$, the terminal $(\infty,n)$-category.
\end{prp}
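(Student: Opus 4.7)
Since the statement is cited directly from \cite{Campion2023}, there is no new content to establish here, but let me sketch the line of argument I would follow to reconstruct it. The plan is to exhibit the tensor product on a well-chosen presentation of $\fCat_{(\infty,n)}$ and then transfer it back by a monoidal localization argument. Specifically, I would realize $\fCat_{(\infty,n)}$ as an accessible localization of presheaves of spaces on a small category of ``cells'', say $\Theta_n$ or the category of Gray-pasting shapes, whose representables admit a classical strict $n$-categorical Gray tensor product defined combinatorially on pasting diagrams (in the style of Steiner or Crans).

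First I would extend this tensor from representables to all presheaves by Day convolution, obtaining a closed symmetric or non-symmetric monoidal structure on $\mathrm{Fun}(\Theta_n^{\op}, \Spaces)$ with unit the presheaf represented by $[0]$. Closedness at this stage is automatic from the adjoint functor theorem since $\mathrm{Fun}(\Theta_n^{\op}, \Spaces)$ is presentable and $- \otimes -$ preserves colimits separately in each variable by construction. The core step is then to verify that the set of maps defining the localization $\mathrm{Fun}(\Theta_n^{\op}, \Spaces) \to \fCat_{(\infty,n)}$ (Segal maps, globularity, completeness) is compatible with $\otimes$, in the sense that the local equivalences form a $\otimes$-ideal; this would invoke the standard monoidal localization criterion (the tensor of a local equivalence with a representable is a local equivalence).

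The step I expect to be the main obstacle is this monoidal compatibility: one has to show that $W \otimes [\undl{t}]$ is a Segal/completeness equivalence whenever $W$ is, for every representable $[\undl{t}]$. For the Segal maps this requires a cellular decomposition argument showing that $[\undl{t}] \otimes -$ sends cell-inclusions to pushouts in a compatible way, which is exactly the combinatorial heart of Campion's work (and earlier arguments of Gagna--Harpaz--Lanari and Maehara); for completeness the compatibility is comparatively soft once Segality is in hand. Once this is verified, the localization inherits a closed monoidal structure, and the unit remains $[0]$ because $[0]$ is already local.

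Finally I would check that the resulting tensor product genuinely agrees, on strict $n$-categories viewed inside $\fCat_{(\infty,n)}$, with the classical Gray tensor product, so that the notation is unambiguous; this is a direct calculation on generators and relations using the Day convolution formula. This last step is not logically necessary for the proposition but is what makes it useful for the rest of the paper.
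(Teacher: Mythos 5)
The paper offers no proof of this proposition: it is lifted verbatim from \cite[Theorem A]{Campion2023}, and you correctly recognize this in your first sentence, so there is formally nothing to compare against. Your reconstruction sketch is a reasonable high-level outline of how such a theorem could be established (start from a combinatorial tensor on cells, Day-convolve, check that the localizing maps form a tensor ideal), and it correctly identifies the monoidal compatibility of the localization as the technical crux. That said, be a little careful about presenting it as ``the'' argument: Campion's actual proof is organized somewhat differently --- it leans heavily on Steiner's theory of augmented directed chain complexes and on a careful cofibrancy analysis for the strict Gray tensor product, rather than on a bare Day convolution over $\Theta_n$, and the characterization of the resulting $\otimes$ is via a universal property (colimit-preserving in each variable, agreeing with the cartesian product on $\infty$-groupoids) rather than by transport of a convolution structure. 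Since the proposition in this paper is used only as a black box, none of this affects the surrounding argument, but if you ever need to open the box you should consult the source rather than rely on this sketch.
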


\begin{dfn}
	Denote by $\square^n : \bD^n \to \fCat_{(\infty,n)}$ the composite
	\begin{equation*}
		y \otimes \dotsb \otimes y : \bD^n \hookrightarrow (\fCat_{(\infty,1)})^n \hookrightarrow (\fCat_{(\infty,n)})^n \to  \fCat_{(\infty,n)}
	\end{equation*}
	of the $n$-fold product of the Yoneda embedding $y$ with the tensor product $\otimes$ from Campion's theorem. We call $\square(n) := \square^n(1, \dotsc, 1) \in \fCat_{(\infty,n)}$ the \emph{$n$-dimensional lax cube}.
\end{dfn}

\begin{lmm} \label{lmm:square-segal}
	The functor $\square^n$ has the following properties:
	\begin{enumerate}
		\item it factors through gaunt strict $n$-categories;
		\item it is co-Segal: its values are given by iterated pushouts of the values of elementary $n$-tuples, and these pushouts can be computed strictly or weakly;
		\item for any $a, b \geq 0$ such that $a + b = n+1$ there is a commutative diagram
		\begin{equation*}
			\begin{tikzcd}
				\bD^{n-1} \ar[d, "\simeq"'] \ar[r, "\square^{n-1}"] & \fCat_{(\infty,n-1)} \ar[dd, hook] \\
				\bD^{a} \times \{[0]\} \times \bD^b \ar[d, hook] & \\
				\bD^n \ar[r, "\square^n"'] & \fCat_{(\infty,n)}
			\end{tikzcd}
		\end{equation*}
	\end{enumerate}
\end{lmm}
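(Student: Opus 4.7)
The plan is to verify each of the three properties directly from the description of $\square^n$ as an iterated lax Gray tensor product of representables, relying on \Cref{prp:lax-gray-cubes} and standard properties of Gray products.

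For (1), each $[t_i]$ is a poset, hence a gaunt strict $1$-category, so it suffices to know that the lax Gray tensor product of gaunt strict $n$-categories is again gaunt strict. The resulting cubes $[t_1] \otimes \cdots \otimes [t_n]$ are the classical \emph{directed cubes} of Crans--Gray, whose $k$-cells inherit well-defined orientations from the simplex factors and hence are all non-identity and non-invertible; this makes them both strict and gaunt. The identification of the $(\infty,n)$-categorical tensor product from \Cref{prp:lax-gray-cubes} with the classical Crans--Gray one on the gaunt strict subcategory can be extracted from uniqueness of such closed monoidal structures, or more directly by inspection of Campion's construction on representables.

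For (2), the closed monoidal property from \Cref{prp:lax-gray-cubes} implies that $- \otimes X$ preserves all colimits for each $X$, so the $n$-variable tensor product preserves colimits separately in each factor. The Yoneda embedding $y : \bD \hookrightarrow \fCat_{(\infty,1)}$ sends Segal pushouts $[t] \simeq [1] \sqcup_{[0]} \cdots \sqcup_{[0]} [1]$ to genuine pushouts in $\fCat_{(\infty,1)}$, so $\square^n$ sends the multivariable Segal pushouts in $\bD^n$ to iterated pushouts in $\fCat_{(\infty,n)}$. The comparison with strict pushouts follows because the maps involved are cellular inclusions among gaunt strict subcategories of the lax cube, a setting in which strict and $\infty$-categorical pushouts are well known to agree.

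Property (3) is then formal: since $[0]$ is the monoidal unit of $\otimes$, $\square^n$ evaluated at a tuple with $[0]$ in the $(a+1)$-th slot reduces to $[t_1] \otimes \cdots \otimes [t_a] \otimes [t_{a+2}] \otimes \cdots \otimes [t_n]$, and the inclusion $\fCat_{(\infty,n-1)} \hookrightarrow \fCat_{(\infty,n)}$ intertwines the two tensor products via the same uniqueness invoked in (1). The main obstacle is precisely these strict/gaunt identifications, since \Cref{prp:lax-gray-cubes} is stated at the $(\infty,n)$-categorical level and must be compared with the classical Crans--Gray construction to obtain statements about strict $n$-categories; however, for the restricted case of tensor powers of simplicial representables this comparison is well-documented and can be invoked without further work.
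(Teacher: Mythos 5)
Your argument follows essentially the same route as the paper: for (1) that $\otimes$ restricts to the classical Gray tensor product of strict $n$-categories, for (2) cocontinuity of $\otimes$ in each variable together with the co-Segal property of the Yoneda embedding, and for (3) unitality of $[0]$. The one caution is that your appeal to a general \emph{uniqueness} of closed monoidal structures to identify Campion's $\otimes$ with the classical Crans--Gray tensor is not a self-contained argument (no such general uniqueness principle exists); the paper instead takes as a cited fact from \cite{Campion2023} that $\otimes$ restricts to the strict Gray tensor product and that the relevant pushouts may be computed strictly, which is the correct grounding for both (1) and (2).
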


\begin{proof}
	The first property follows from the fact that $\otimes$ restricts to the usual Gray tensor product of strict $n$-categories. The second property holds since the Yoneda embedding $y : \bD \to \fCat_{(\infty,1)}$ is co-Segal and $\otimes$ preserves colimits in each variable; that the pushouts can be computed strictly is spelled out in detail in \cite{Campion2023}. The third property is a reformulation of the fact that $[0]$ is the unit of $\otimes$.
\end{proof}

Our next order of business is to understand $\square(n)$ for all $n$; thanks to the two properties above this is enough to understand $\square^n(\undl{t})$ for all $\undl{t}$. The description of $\square(n)$ that we will use is a small modification of the one presented in \cite{ABS2002} with the only difference occurring in the sign rule that determines the orientation of the boundary of a given $k$-morphism of $\square(n)$. This will be made clearer once we have defined the objects of interest and stated the characterization.

We fix a combinatorial $n$-cube $I^n$, the $n$-fold product of the combinatorial interval $I = \{0, 1\}$. Let $p^0 = \{0\}$ and $p^1 = \{1\}$.

\begin{dfn}
	A \emph{$k$-dimensional face $f$ of $I^n$} is a word $a_1 a_2 \dotsb a_n$ of length $n$ in the alphabet $\{p^0, p^1, I\}$ containing exactly $k$ many copies of $I$. For two faces $f = a_1 a_2 \dotsb a_n$ and $g = b_1 b_2 \dotsb b_n$, we say that \emph{$f$ is contained in $g$} and write $f \sub g$ if $a_i \neq b_i$ implies $b_i = I$.
\end{dfn}

\begin{rmk} \label{rmk:faces}
	The characteristic map of a face $f$ is the injective function $\chi_f : I^k \to I^n$ defined as the product $\vphi_1 \times \vphi_2 \times \dotsb \times \vphi_n$, where
	\begin{equation*}
		\vphi_i =
		\begin{cases}
			p^l \hookrightarrow I & \text{if }a_i = p^l, \\
			\id_I & \text{if } a_i = I
		\end{cases}
	\end{equation*}
	is either an inclusion or the identity map on $I$. Geometrically, a face $f = a_1 a_2 \dotsm a_n$ with $a_i \in \{p^0, p^1, I\}$ corresponds to the subset $\prod_i a_i \sub I^n$, which is also the image of $\chi_f$.
\end{rmk}

\begin{dfn} \label{dfn:parity}
	We define the \emph{parity $\sigma(f,g)$ of $f$ relative to $g$} for a $k$-dimensional face $f$ contained in a $(k+1)$-dimensional face $g$ of $I^n$.
	\begin{itemize}
		\item If $k = n-1$ then $g = I^n$ and $f$ contains exactly one copy of $p^l$, say in coordinate $r$, for some $l \in \{0,1\}$. The parity of $f$ is the number $\sigma(f, I^n) := r + l + n$ computed modulo $2$.
		\item If $k < n-1$ then the characteristic map of $f$ factors through that of $g$ via a function $I^k \to I^{k+1}$. In this case we set $n = k+1$ and treat $g$ as $I^{n}$, $f$ as a face of $g$ of dimension $n - 1 = k$, and apply the previous definition.
	\end{itemize}
\end{dfn}

\begin{rmk}
	An equivalent way to compute $\sigma(f,g)$ is as follows. Take the word $g = b_1 \dotsb b_n$ and erase all $n - k - 1$ copies of $p^l$, leaving only the $k+1$ copies of $I$. Say that those copies of $p^l$ were $b_{i_1}, \dotsc, b_{i_{n-k-1}}$. Now erase from $f = a_1 \dotsb a_n$ the entries $a_{i_1}, \dotsc, a_{i_{n-k-1}}$, which were all copies of $p^l$ since $f \sub g$. Now the resulting word has length $k+1$ and contains $k$ copies of $I$ and a single $p^l$, say in the new coordinate $r$. The number $\sigma(f,g)$ is precisely $r + l + (k + 1)$.
\end{rmk}

\begin{fct}[{Reformulation of \cite[Theorem 1.3]{ABS2002}}] \label{prp:cube-structure}
	For any $n$, $\square(n)$ is a strict gaunt finite $n$-category with the following properties:
	\begin{enumerate}[label=(\alph*)]
		\item if $G^n_k$ denotes the set of $k$-dimensional faces of $I^n$ then there is an injection from $G^n_k \hookrightarrow M^n_k$ to the set of non-degenerate $k$-morphisms of $\square(n)$;
		\item if $f \in G^n_k$, the source/target of $f$ when treated as a $k$-morphism of $\square(n)$ is the (unique) composition of the even/odd $(k-1)$-dimensional faces of $f$, where the parity is calculated relative to $f$;
		\item the subset $G^n_k$ and the identity $k$-morphisms of lower dimensional morphisms freely generate $M^n_k$ under composition, where freely means that the only relation is the cancellation of identities.
	\end{enumerate}
\end{fct}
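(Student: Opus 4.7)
The plan is to reduce the statement to the classical combinatorial model of the lax Gray cube given in \cite{ABS2002}, and then reconcile the sign convention of \Cref{dfn:parity} with the one used there. By Campion's theorem (\Cref{prp:lax-gray-cubes}) and \Cref{lmm:square-segal}(1), $\square(n)=[1]^{\otimes n}$ where $\otimes$ restricts to the usual Gray tensor product on strict $n$-categories; since $[1]$ is gaunt, finite, and strict, and Gray tensor preserves these properties, $\square(n)$ is a strict gaunt finite $n$-category, settling the initial claim.

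For (a) and (c), I would simply invoke the ABS description: they show that the non-degenerate $k$-morphisms of $[1]^{\otimes n}$ are in bijection with the $k$-dimensional faces $G^n_k$, and that $[1]^{\otimes n}$ is freely generated (modulo identity cancellation) by these faces together with the identities on lower-dimensional morphisms. Since these claims are about the underlying $n$-category rather than about source/target conventions, they transfer verbatim.

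For (b), the content is to check that our sign convention produces exactly the source and target maps coming from the Gray tensor structure. I would verify this inductively on $n$: for $k = n-1$, each codimension-one face $f = a_1 \dotsb a_n$ with $a_r = p^l$ is either in the source or target of $I^n$, and one checks that the parity prescription $r + l + n \pmod 2$ agrees (possibly after a global swap of source/target, which is harmless since $\square(n)$ is self-dual under $(-)^{\op_k}$ in each coordinate) with the alternating decomposition from ABS. Once the codimension-one case is handled, the characterization in the remark following \Cref{dfn:parity} --- which factors $\sigma(f,g)$ through the restriction to the subcube $g \cong I^{k+1}$ --- gives the general case automatically, since by \Cref{lmm:square-segal}(3) the inclusion $\square(k+1) \hookrightarrow \square(n)$ induced by $g$ is an embedding of strict $n$-categories compatible with source and target.

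The main obstacle is purely bookkeeping: since \Cref{dfn:parity} is a slight reformulation of the ABS sign rule, one has to carefully check that the two rules either agree outright or differ by a consistent relabelling on each dimension. This boils down to verifying one base case ($n=2$, where the two squares $\square(2)$ have exactly one non-identity $2$-morphism going, say, from $(p^1 I) \circ (I p^0)$ to $(I p^1) \circ (p^0 I)$) and then propagating through the inductive definition, exploiting that compositions in $\square(n)$ can be computed strictly (\Cref{lmm:square-segal}(2)) so no coherence issues obstruct the comparison.
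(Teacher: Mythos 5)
Your proposal follows essentially the same route as the paper: the paper treats this as a citation-level Fact, taking (a), (c), and the strict/gaunt/finite structure verbatim from \cite[Theorem 1.3]{ABS2002}, and handling (b) by observing that its parity rule ($r+l+n$ versus the ABS rule $r+l+1$) differs exactly by reversing the even-dimensional morphisms, so the two conventions are exchanged by $(-)^{\op_2,\op_4,\dotsc}$ --- which is the same convention-reconciliation you carry out, just packaged as an induction on $n$. Two small inaccuracies to fix in your write-up: the faces in $G^n_k$ correspond bijectively to the \emph{generating} (indecomposable) $k$-morphisms, not to all non-degenerate $k$-morphisms (by (c) the latter also include composites, so (a) only asserts an injection), and the relevant self-duality of $\square(n)$ is $(-)^{\op_2,\op_4,\dotsc}$, realized by reversing the order of the tensor factors, rather than an ``$(-)^{\op_k}$ in each coordinate''.
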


\begin{rmk}
	As mentioned before, the only change we made from the construction in \cite{ABS2002} is in how the parity of a face is defined; translating the definition given right before their Theorem 1.3 we see that they set it to be $\sigma(f,I^n) = r + l + 1$, without the contribution of the dimension $n$. Thus our parity corresponds with theirs when $n$ is odd and is opposite to theirs when $n$ is even. The parity only changes the direction of the even-dimensional morphisms, so one can pass from one model to the other by applying $(-)^{\op_2, \op_4, \dotsc}$. We will explain later why our definition works better for our purposes.
\end{rmk}

\begin{exm}
	Here are the first three values of $\square(n)$. These can be computed from \Cref{prp:cube-structure} or looked up -- for example, they appear in \cite[Figure 1]{Campion2023}.
	\begin{enumerate}[start = 0]
		\item $\square(0)$ is the terminal category $[0]$, which we draw like this: $\bullet$. 
		\item $\square(1)$ is the walking arrow $[1]$, which we draw like this: $\begin{tikzcd}[sstyle] \bullet \ar[r] & \bullet \end{tikzcd}$.
		\item $\square(2)$ is the walking lax commutative square, which we draw like this: $\begin{tikzcd}[sstyle] \bullet \ar[r] \ar[d] & \bullet \ar[d] \ar[dl, phantom, "{\rotatebox{-45}{$\Downarrow$}}" description] \\ \bullet \ar[r] & \bullet \end{tikzcd}$.
		\item We draw $\square(3)$ as two $2$-categories connected by a $3$-morphism:
		\begin{equation*}
			\begin{tikzcd}[sstyle]
				\bullet \ar[rr] \ar[dd] \ar[dr] & & \bullet \ar[dr] \ar[dl, phantom, "{\rotatebox{-45}{$\Downarrow$}}" description] & \\
				& \bullet \ar[rr] \ar[dd] \ar[dl, phantom, "{\rotatebox{-45}{$\Downarrow$}}" description] & & \bullet \ar[dd] \ar[ddll, phantom, "{\rotatebox{-45}{$\Downarrow$}}" description] \\
				\bullet \ar[dr] & & & \\
				& \bullet \ar[rr] & & \bullet
			\end{tikzcd}
			\, \Rrightarrow \,
			\begin{tikzcd}[sstyle]
				\bullet \ar[rr] \ar[dd] & & \bullet \ar[dr] \ar[dd] \ar[ddll, phantom, "{\rotatebox{-45}{$\Downarrow$}}" description] & \\
				& & & \bullet \ar[dd] \ar[dl, phantom, "{\rotatebox{-45}{$\Downarrow$}}" description] \\
				\bullet \ar[dr] \ar[rr] & & \bullet \ar[dr] \ar[dl, phantom, "{\rotatebox{-45}{$\Downarrow$}}" description] & \\
				& \bullet \ar[rr] & & \bullet
			\end{tikzcd}
		\end{equation*}
	\end{enumerate}
\end{exm}

\begin{crl}
	Let $F(n)$ denote the poset of $k$-dimensional faces of $I^n$ for $0 \leq k < n$, ordered by containment. Then there is a functor $F(n) \to \fCat_{(\infty,n)}$, $f \mapsto \square(\dim f)$ whose colimit $\partial \square(n) \in \fCat_{(\infty,n)}$ is the underlying sub-$(n-1)$-category of $\square(n)$. In particular we have a colimit decomposition
	\begin{equation*}
		\square(n) \simeq \partial \square(n) \sqcup_{w(n-1) \sqcup w(n-1)} w(n)
	\end{equation*}
	where $w(n)$ is the walking $n$-morphism.
\end{crl}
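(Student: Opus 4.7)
The plan is to first construct the functor $F(n) \to \fCat_{(\infty,n)}$ using the Gray tensor product, then identify its colimit with the underlying $(n-1)$-category of $\square(n)$, and finally read off the pushout decomposition from the fact that there is exactly one top-dimensional face.

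For the functor, each face $f = a_1 \cdots a_n$ of dimension $k$ comes with a characteristic map $\chi_f = \vphi_1 \times \cdots \times \vphi_n$ from \Cref{rmk:faces}, which is a morphism $[1]^k \to [1]^n$ of $\bD^n$ with each $\vphi_i$ either a point inclusion $p^l \hookrightarrow [1]$ or $\id_{[1]}$. Applying the $n$-fold tensor product that defines $\square^n$ gives an induced morphism $\square(k) \to \square(n)$ in $\fCat_{(\infty,n)}$. If $f \sub g$, then in each coordinate the corresponding $\vphi_i^f$ factors through $\vphi_i^g$ in $\bD$ (the condition $a_i \neq b_i \Rightarrow b_i = I$ is exactly what is needed to form the factorization), and tensoring yields a compatible factorization $\square(\dim f) \to \square(\dim g) \to \square(n)$. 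This assembles into a functor $F(n) \to \fCat_{(\infty,n)} / \square(n)$.

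Next I would compute the colimit and compare it to the underlying sub-$(n-1)$-category $\mathrm{sk}_{n-1}\square(n)$. By part (1) of \Cref{lmm:square-segal} the target is a gaunt strict $n$-category, and \Cref{prp:cube-structure} presents $\square(n)$ as freely generated under composition, with only identity cancellation as a relation, by one non-degenerate $k$-morphism per $k$-face. The subcategory $\mathrm{sk}_{n-1}\square(n)$ is therefore freely generated in the same sense by the non-degenerate morphisms indexed by proper faces. Part (2) of \Cref{lmm:square-segal} guarantees that the relevant pushouts can be computed strictly, so it suffices to check the equivalence at the strict level: the canonical map $\colim_{F(n)} \square(\dim -) \to \mathrm{sk}_{n-1}\square(n)$ is surjective on generating morphisms by construction, and injective because the only relations on either side are identity cancellations, each of which is already present inside some $\square(\dim f)$.

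For the pushout decomposition, observe that the single $n$-dimensional face $II \cdots I$ contributes the unique non-degenerate $n$-morphism of $\square(n)$. By part (b) of \Cref{prp:cube-structure} its source and target are specific composites of the $(n-1)$-dimensional faces, so both are $(n-1)$-morphisms already lying in $\partial \square(n)$. Adjoining a single $n$-morphism to $\partial \square(n)$ along a prescribed source and target is by definition the pushout $\partial \square(n) \sqcup_{w(n-1) \sqcup w(n-1)} w(n)$, and the free-generation statement in part (c) of \Cref{prp:cube-structure} ensures that this recovers all of $\square(n)$.

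The main obstacle is reconciling the weak colimit in $\fCat_{(\infty,n)}$ with the strict colimit of gaunt $n$-categories used implicitly by \Cref{prp:cube-structure}; this is precisely what part (2) of \Cref{lmm:square-segal} is designed to handle, so once the book-keeping about faces and their parities is done carefully, no further coherence issues should arise.
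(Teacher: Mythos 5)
Your proposal is correct and follows essentially the same route as the paper, which deduces the corollary directly from \Cref{prp:cube-structure} (faces generate, compositions are free) together with the strict-versus-weak colimit comparison of \Cref{lmm:square-segal}; you simply spell out the construction of the functor via characteristic maps and the generation/identity-cancellation argument in more detail than the paper's one-line proof.
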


\begin{proof}
	This is immediate from the fact that the $k$-dimensional faces of $I^n$ generate all the $k$-morphisms of $\square(n)$, and that the compositions are all free (i.e. obtained by pasting walking $k$-morphisms together).
\end{proof}

\subsection{Some maps of lax cubes}

The advantage of having an explicit description of $\square(n)$ is that mapping out of it becomes easier. In this subsection we will study ``collapse'' maps $\kappa(n) : \square(n) \to w(n)$ onto the walking $n$-morphism.

First recall that $w(n)$ has exactly two non-degenerate parallel $k$-morphisms for $0 \leq k < n$ and a single $n$-morphism. Label the unique $n$-morphism by $J^n$ and label the two $k$-morphisms via the words $J^k q^l$; with respect to the (one or two) $(k+1)$-morphism(s) in $w(n)$, the one with $l = 0$ is the source and the one with $l = 1$ is the target. Note that the source and target of $J^k q^l$ are given by $J^{k-1} q^0$ and $J^{k-1} q^1$, respectively. Now, for a face $f$ of $I^n$ of dimension smaller than $n$ there exists a unique pair $(k_f,l_f)$ such that the word representing $f$ begins with $I^{k_f} p^{l_f}$. Note that $\dim f \geq k_f$ since there could be some more copies of $I$ in the word representing $f$ that appear after $p^{l_f}$. 

\begin{dfn}
	Define $\kappa(n)$ on a generator $f$ of $\square(n)$ by 
	\begin{equation*}
		\kappa(n)(f) := \id^{\dim f - k_f} (J^{k_f} q^{l_f})
	\end{equation*}
	if $\dim f < n$ and $\kappa(n)(I^n) := J^n$. Composition in $\square(n)$ is free so we are forced to define $\kappa(n)(f \circ_k g) := \kappa(n)(f) \circ_k \kappa(n)(g)$ for arbitrary morphisms $f, g$ of $\square(n)$.
\end{dfn}

\begin{exm} \label{exm:kappa}
	It might be instructive to draw what happens when $n = 2$ and $n = 3$ ($\kappa(n)$ is the identity when $n \leq 1$). This is $\kappa(2)$: 
	\begin{equation*}
		\begin{tikzcd}[sstyle]
			{\color{red} \bullet} \ar[r, color = blue] \ar[d, color = red] & {\color{olive} \bullet} \ar[d, color = olive] \ar[dl, phantom, "\rotatebox{-45}{$\Downarrow$}" description, color = green] \\
			{\color{red} \bullet} \ar[r, color = yellow] & {\color{olive} \bullet}
		\end{tikzcd}
		\quad \xlongrightarrow{\kappa(2)} \quad
		\begin{tikzcd}[sstyle]
			{\color{red} \bullet} \ar[rr, bend left = 60, color = blue] \ar[rr, bend right = 60, color = yellow] \ar[rr, phantom, "\Downarrow" description, color = green] & & {\color{olive} \bullet}
		\end{tikzcd}
	\end{equation*}
	We interpret the colors as telling us where each component is sent to. For example, the red colored dots and arrow on the left are all sent to the red dot on the right, and the top blue arrow on the left is sent to the top blue arrow on the right.
	
	This is $\kappa(3)$:
	\begin{equation*}
		\begin{tikzcd}[sstyle]
			{\color{red} \bullet} \ar[rr, color = blue] \ar[dd, color = red] \ar[dr, color = red] & & {\color{olive} \bullet} \ar[dr, color = olive] \ar[dl, phantom, "{\rotatebox{-45}{$\Downarrow$}}" description, color = green] & \\
			& {\color{red} \bullet} \ar[rr, color = yellow] \ar[dd, color = red] \ar[dl, phantom, "{\rotatebox{-45}{$\Downarrow$}}" description, color = red] & & {\color{olive} \bullet} \ar[dd, color = olive] \ar[ddll, phantom, "{\rotatebox{-45}{$\Downarrow$}}" description, color = yellow] \\
			{\color{red} \bullet} \ar[dr, color = red] & & & \\
			& {\color{red} \bullet} \ar[rr, color = yellow] & & {\color{olive} \bullet}
		\end{tikzcd}
		\, {\color{brown} \Rrightarrow} \,
		\begin{tikzcd}[sstyle]
			{\color{red} \bullet} \ar[rr, color = blue] \ar[dd, color = red] & & {\color{olive} \bullet} \ar[dr, color = olive] \ar[dd, color = olive] \ar[ddll, phantom, "{\rotatebox{-45}{$\Downarrow$}}" description, color = blue] & \\
			& & & {\color{olive} \bullet} \ar[dd, color = olive] \ar[dl, phantom, "{\rotatebox{-45}{$\Downarrow$}}" description, color = olive] \\
			{\color{red} \bullet} \ar[dr, color = red] \ar[rr, color = blue] & & {\color{olive} \bullet} \ar[dr, color = olive] \ar[dl, phantom, "{\rotatebox{-45}{$\Downarrow$}}" description, color = pink] & \\
			& {\color{red} \bullet} \ar[rr, color = yellow] & & {\color{olive} \bullet}
		\end{tikzcd}
		\quad \xlongrightarrow{\kappa(3)} \quad
		\begin{tikzcd}[sstyle]
			{\color{red} \bullet} \ar[rrr, ""'{name=T}, bend left = 50, color = blue] \ar[rrr, ""{name=B}, bend right = 50, color = yellow] & & & {\color{olive} \bullet}
			\ar[from=T, to=B, Rightarrow, bend right = 60, ""{name=C}, color = green]
			\ar[from=T, to=B, Rightarrow, bend left = 60, ""'{name=A}, color = pink]
			\ar[from=C, to=A, phantom, "\Rrightarrow", color = brown]
		\end{tikzcd}
	\end{equation*}
	Obviously, in both examples, whenever a $k$-morphism $f$ on the left has the same color as a $j$-morphism $g$ on the right and $j < k$ then we are indicating that $f$ is sent to the corresponding identity $k$-morphism of $g$. 
\end{exm}

\begin{prp}
	The associations above make $\kappa(n)$ into a functor $\square(n) \to w(n)$ of $n$-categories.
\end{prp}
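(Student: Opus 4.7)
The plan is to exploit part (c) of \Cref{prp:cube-structure}: since $\square(n)$ is freely generated as a strict $n$-category by its face set $G^n_\bullet$ under composition modulo identity cancellation, a strict $n$-functor out of $\square(n)$ is uniquely determined by values on generators that respect identities, sources, and targets. Identities of lower-dimensional generators are sent to identities by the very form $\id^{\dim f - k_f}(J^{k_f}q^{l_f})$ of the definition, so everything reduces to checking that, for each generator $f$, one has $\kappa(n)(\partial^\epsilon f) = \partial^\epsilon \kappa(n)(f)$ for $\epsilon\in\{0,1\}$, where $\partial^\epsilon f$ is the pasting of codim-$1$ faces of $f$ of parity $\epsilon$ relative to $f$.

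The main observation is a dichotomy for the codim-$1$ faces of a generator $f = a_1\dotsb a_n$ with leading block $I^{k_f}p^{l_f}$ (or of $f = I^n$ with empty leading block). Writing $g$ for the face obtained by replacing the $I$ in position $j$ by $p^l$: if $j > k_f+1$ (\emph{interior}), then $g$ retains the leading block, so $k_g = k_f$ and $\kappa(n)(g) = \id^{\dim f - 1 - k_f}(J^{k_f}q^{l_f})$, which equals $\partial^\epsilon \kappa(n)(f)$ in this regime; if $j \leq k_f$ (\emph{boundary}), then $k_g = j - 1 < k_f$ and $\kappa(n)(g) = \id^{\dim f - j}(J^{j-1}q^l)$ is a multiply degenerate identity whose underlying cell has dimension $j-1 < \dim f - 1$. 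In the pasting assembling $\partial^\epsilon f$, boundary-type faces are absorbed by unitality in $w(n)$, while interior-type faces all have the same image and their composite collapses to a single copy of $\partial^\epsilon \kappa(n)(f)$. When $\dim f = k_f$ (no interior faces) or $f = I^n$ (no leading block at all), the analysis is the same except that the face in the \emph{last} candidate position ($j = k_f$, respectively $j = n$) has $\dim f - j = 0$ and so maps under $\kappa(n)$ to the non-identity $J^{\dim f - 1}q^l$ rather than to an absorbed identity.

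The remaining technical point is parity bookkeeping in these two edge cases: one must verify that the surviving face with $l = 0$ has parity $0$ (source side) and the one with $l = 1$ has parity $1$ (target side). A direct application of \Cref{dfn:parity} -- erase from $g$'s word the positions at which $f$ has a $p$, and compute $r + l + \dim f$ modulo $2$ -- yields $\sigma(g,f)\equiv l\pmod{2}$ in both cases, which is precisely the assignment needed to match $\partial^0 \kappa(n)(f) = J^{\dim f - 1} q^0$ and $\partial^1 \kappa(n)(f) = J^{\dim f - 1} q^1$ on the $w(n)$ side. This is exactly the place where our parity convention (with the additional summand $+n$, distinct from the convention in \cite{ABS2002}) is designed to be invoked; it is also the subtlest part of the argument. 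Once the parity compatibility is established, $\kappa(n)$ respects identities, sources, and targets on all generators, hence extends uniquely to a strict $n$-functor $\square(n) \to w(n)$, as claimed.
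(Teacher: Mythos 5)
Your proof is correct and follows essentially the same route as the paper: reduce to checking source/target preservation on the generating faces, note that every codimension-one face except the one breaking the leading $I$-block at its last slot maps to an identity that is absorbed in $w(n)$, and use the parity convention $\sigma \equiv r+l+\dim$ to confirm the surviving face lands on the correct side. The only difference is presentational: you treat the cases $\dim f > k_f$ and $f = I^n$ directly via the interior/boundary dichotomy and carry out the parity computation explicitly, whereas the paper writes out only the case $\dim f = k_f$ and deduces the rest.
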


\begin{proof}
	Composition is taken care of, as are identities, so we just have to check that $\kappa(n)$ preserves the sources and targets of the generating morphisms. We will only treat the case where $\dim f = k_f$, as the general case can be deduced from it. If $\dim f = k_f$ then $f$ starts with $I^{k_f} p^{l_f}$ and contains no other copies of $I$. Then $\kappa(n)(f) = J^{k_f} q^{l_f}$, so we have to verify that the source of $f$ is sent to $J^{k_f-1} q^{0}$. Now, the even subfaces of $f$ come in two flavors: there's the one which starts with $I^{k_f-1} p^0 p^{l_f}$, which $\kappa(n)$ sends to $J^{k_f-1} q^{0}$, and the other ones all start with $I^a p^c I^b p^{l_f}$, which forces $\kappa(n)$ to send them to some identity of a lower dimensional morphism. Therefore the composition of the even subfaces of $f$, i.e. the source of $f$, is sent to a composition of $J^{k_f-1} q^{0}$ with some identities of lower dimensional morphisms; but the only composition of that form available in $w(n)$ returns $J^{k_f-1} q^{0}$, which is what we needed. The same argument applied to $I^{k_f-1} p^1 p^{l_f}$ shows that the target of $f$ is sent to $J^{k_{f}-1} q^1$.
\end{proof}

We want to extend $\kappa(n)$ to a natural transformation of functors $\square^n \to w$. To do this we will need the following technical lemma:

\begin{lmm}
	Let $F, G : \bD^n \to \fCat_n$ be Segal functors valued in strict $n$-categories; in particular, the Segal maps are isomorphisms, not just equivalences. Let $\sigma, \tau : [0] \to [1]$ and $\rho : [1] \to [0]$ be the three maps in $\bD$ between $[0]$ and $[1]$, and let $\gamma : [1] \to [2]$ denote the unique active map. Let $\{ \alpha_{\undl{t}} : F(\undl{t}) \to G(\undl{t}) \mid \undl{t} \text{ is an elementary $n$-tuple}\}$ be a collection of maps defined for elementary $n$-tuples. Extend $\alpha_{\undl{t}} : F(\undl{t}) \to G(\undl{t})$ to non-elementary $n$-tuples via the Segal maps:
	\begin{equation*}
		\alpha_{\undl{t}} := \colim_{\undl{e} \to \undl{t}} \alpha_{\undl{e}} : F(\undl{t}) \cong \colim_{\undl{e} \to \undl{t}} F(\undl{e}) \to \colim_{\undl{e} \to \undl{t}}  G(\undl{e}) \cong G(\undl{t})
	\end{equation*}
	where the colimit is taken over all inert map $\undl{e} \to \undl{t}$ with $\undl{e}$ elementary. Then $\alpha_{\undl{t}}$ for $\undl{t} \in \bD^n$ form the components of a natural transformation $\alpha : F \to G$ if and only if the natural diagram 
	\begin{equation*}
		\begin{tikzcd}
			F(\undl{s}) \ar[r, "\alpha_{\undl{s}}"] \ar[d, "F(\vphi)"'] & G(\undl{s}) \ar[d, "G(\vphi)"] \\
			F(\undl{t}) \ar[r, "\alpha_{\undl{t}}"] & G(\undl{t})
		\end{tikzcd}
	\end{equation*}
	commutes for all $\undl{s}, \undl{t}, \vphi$ with the following properties:
	\begin{enumerate}
		\item $\undl{s} = (\undl{a}, s_i, \undl{b})$ and $\undl{t} = (\undl{a}, t_i, \undl{b})$ with $\undl{a}$ and $\undl{b}$ both elementary tuples;
		\item $\vphi_i : [s_i] \to [t_i]$ is one of $\sigma$, $\tau$, $\rho$, or $\gamma$;
		\item $\vphi_j = \id$ for all $j \neq i$.
	\end{enumerate}
\end{lmm}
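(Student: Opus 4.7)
The plan is to prove the non-trivial direction; the converse is immediate. I would reduce the general naturality condition to the listed special cases in three stages.

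The first two reductions are largely formal. Any morphism $\vphi : \undl{s} \to \undl{t}$ in $\bD^n$ decomposes as a sequential composition $\vphi = \vphi^{(n)} \circ \dotsb \circ \vphi^{(1)}$ where $\vphi^{(i)}$ is the identity outside the $i$-th coordinate, and since horizontal pastings of commuting squares remain commuting, it suffices to handle each $\vphi^{(i)}$ individually. Next, with the index $i$ and the map $\vphi_i : [s_i] \to [t_i]$ fixed, I would reduce the outer tuples $\undl{a} = (s_1, \dotsc, s_{i-1})$ and $\undl{b} = (s_{i+1}, \dotsc, s_n)$ to elementary tuples: the strict Segal condition presents $F(\undl{a}, s_i, \undl{b})$ as an iterated strict pullback over the values at elementary inert covers of $\undl{a}$ and $\undl{b}$, and analogously for $G$. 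Since $\alpha$ is by construction the colimit of its elementary components, naturality for elementary $\undl{a}, \undl{b}$ propagates via the universal property of the pullback to naturality for arbitrary $\undl{a}, \undl{b}$.

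The main content is in the third stage: given elementary $\undl{a}$ and $\undl{b}$, reducing from an arbitrary $\vphi_i : [s_i] \to [t_i]$ in $\bD$ to the four generators $\sigma, \tau, \rho, \gamma$. I would use the active–inert factorization in $\bD$: every $\vphi_i$ is the composition of an inert map with an active map. Under the Segal isomorphism $F([k]) \cong F([1]) \times_{F([0])} \dotsb \times_{F([0])} F([1])$, the inert part becomes a projection onto a subset of consecutive Segal factors, which decomposes into iterated applications of $F(\sigma)$ and $F(\tau)$ in individual factors; the active part decomposes into iterated composites via $F(\gamma)$, the basic composition; and the degenerate direction is handled by iterating $F(\rho)$. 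At each primitive step naturality is one of the hypotheses, and vertical composition of commuting squares then yields naturality for $\vphi_i$.

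The main obstacle will be this last combinatorial step --- explicitly decomposing a general $\vphi_i$ through the generators and verifying that the strict Segal isomorphisms for $F$ and $G$ intertwine compatibly with $\alpha$ at each intermediate level. I expect this will proceed by induction on $s_i$ and $t_i$, peeling off one generator at a time and invoking the Segal isomorphism to recast the remaining factor as a lower-dimensional composition where the inductive hypothesis or one of the base cases applies.
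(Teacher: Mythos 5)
Your three-stage plan --- reduce to single-coordinate maps, reduce the inactive coordinates to elementary tuples, then decompose the remaining one-variable map through the four generators --- is essentially the strategy the paper follows, though the paper takes a shorter route: it reduces directly to face and degeneracy maps of $\bD^n$, writes each such map in coordinate $i$ as a wedge (pushout over $[0]$) of identities with exactly one of $\sigma, \tau, \rho, \gamma$, and then observes that the full naturality square is a colimit of the hypothesized basic squares. Your use of active-inert factorization is a valid alternative to the paper's face-and-degeneracy generation, but it adds moving parts rather than saving work.

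There is, however, a direction error running through your Stages 2 and 3 that would make the argument fail as written. You invoke the \emph{limit} form of the Segal condition, writing $F([k]) \cong F([1]) \times_{F([0])} \dotsb \times_{F([0])} F([1])$, speak of the inert part as a ``projection onto Segal factors,'' and propagate naturality ``via the universal property of the pullback.'' But $F, G : \bD^n \to \fCat_n$ are \emph{covariant} functors, and the Segal condition the lemma assumes is the co-Segal one: the statement's own formula $F(\undl{t}) \cong \colim_{\undl{e} \to \undl{t}} F(\undl{e})$ exhibits $F([k])$ as an iterated \emph{pushout} $F([1]) \sqcup_{F([0])} \dotsb \sqcup_{F([0])} F([1])$, not a pullback. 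Consequently the inert maps $\sigma, \tau$ induce colimit \emph{inclusions} rather than projections, and the universal property you should be using is that a map \emph{out of} a colimit is determined by its restrictions along the cocone legs --- this is what lets commutativity of the basic squares propagate to commutativity of the assembled square. Once you replace pullbacks by pushouts and projections by inclusions throughout, your reduction goes through and agrees with the paper's proof.
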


\begin{proof}
	One direction is clear. For the other, we need to show that all naturality diagrams commute. It's enough to check this for face and degeneracy maps, as they generate $\bD^n$ under composition. A face map is the product of identities together with a map $[k] \to [k+1]$ whose image misses the value $i$; if $i = 0$ we can write this map as
	\begin{equation*}
		[k-1] \cong [0] \sqcup_{[0]} [k-1] \xlongrightarrow{\tau \sqcup \id} [1] \sqcup_{[0]} [k-1] \cong [k],
	\end{equation*} 
	if $i = k+1$ we can write it as
	\begin{equation*}
		[k-1] \cong [k-1] \sqcup_{[0]} [0] \xlongrightarrow{\id \sqcup \sigma} [k-1] \sqcup_{[0]} [1] \cong [k],
	\end{equation*}
	and if $1 \leq i \leq k$ we can write it as
	\begin{equation*}
		[k-1] \cong [i] \sqcup_{[0]} [1] \sqcup_{[0]} [k-i-2] \xlongrightarrow{\id \sqcup \gamma \sqcup \id} [i] \sqcup_{[0]} [2] \sqcup_{[0]} [k-i-2] \cong [k].
	\end{equation*} 
	Similarly, a degeneracy map is the product of identities together with a map of the form $[k+1] \to [k]$ with some $i$ in the codomain having exactly two preimages; we can always write the last map as
	\begin{equation*}
		[k] \cong [i] \sqcup_{[0]} [1] \sqcup_{[0]} [k-i-1] \xlongrightarrow{\id \sqcup \rho \sqcup \id} [i] \sqcup_{[0]} [0] \sqcup_{[0]} [k-i-1] \cong [k-1].
	\end{equation*}
	Thus we can observe that the naturality diagram of a face or degeneracy map is a colimit of naturality diagrams of maps which are products of identities and one of $\sigma, \tau, \rho, \gamma$. If the latter all commute, which is the assumption in the claim, then all their colimits commute; thus all naturality diagrams commute, concluding the proof.
\end{proof}

\begin{prp} \label{prp:kappa-existence}
	There is a natural transformation $\kappa : \square \to w$ of functors $\bD^n \to \fCat_{(\infty,n)}$ such that the component of $\kappa$ at $(1, \dotsc, 1)$ is given by $\kappa(n)$.
\end{prp}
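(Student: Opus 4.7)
The plan is to apply the preceding lemma to $F = \square^n$ and $G = w$, both of which factor through strict gaunt $n$-categories (Lemma \ref{lmm:square-segal}(1) and Remark \ref{rmk:walking-morphisms}) and are co-Segal on $\bD^n$ (Lemma \ref{lmm:square-segal}(2) and Remark \ref{rmk:walking-morphisms}(c)). For an elementary $n$-tuple $\undl{t}$ with exactly $k$ entries equal to $1$, iterated application of Lemma \ref{lmm:square-segal}(3) gives compatible identifications $\square^n(\undl{t}) \cong \square(k)$ and $w(\undl{t}) \cong w(k)$; under these identifications I set $\kappa_{\undl{t}} := \kappa(k)$. By the preceding lemma it then suffices to verify that the naturality squares commute for maps that are the identity in all coordinates but one, in which they are one of $\sigma, \tau, \rho, \gamma$, and whose remaining coordinates are indexed by elementary tuples.

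The face cases $\vphi = \sigma, \tau$ reduce to the following claim about the collapse maps: the restriction of $\kappa(k)$ to any of the codimension-one source/target boundaries of $\square(k)$ agrees with $\kappa(k-1)$ post-composed with the corresponding source/target inclusion $w(k-1) \hookrightarrow w(k)$. This is immediate from the explicit formula $\kappa(k)(f) = \id^{\dim f - k_f}(J^{k_f} q^{l_f})$ together with the observation that a face of $I^{k-1}$, viewed as a face of $I^k$ via insertion of a constant coordinate $p^0$ or $p^1$, has the same values $(k_f, l_f)$. The degeneracy case $\vphi = \rho$ is similar: inserting a constant factor $I$ into the word for $f$ and simultaneously lifting the collapse shifts $\dim f$ and $k_f$ by the same amount, so the naturality square commutes on the nose.

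The main case is $\vphi = \gamma : [1] \to [2]$, where $\undl{s} = (\undl{a}, 1, \undl{b})$ is elementary but $\undl{t} = (\undl{a}, 2, \undl{b})$ is not. Here $\kappa_{\undl{t}}$ is determined via the co-Segal pushout
\[
\square^n(\undl{t}) \cong \square^n(\undl{a}, 1, \undl{b}) \sqcup_{\square^n(\undl{a}, 0, \undl{b})} \square^n(\undl{a}, 1, \undl{b})
\]
together with the analogous pushout for $w(\undl{t})$, the map between them being induced by $\kappa$ on each summand. Since $\gamma$ selects the diagonal composite of the two morphisms in the pushout, commutativity of the naturality square amounts to the claim that $\kappa(k{+}1)$ (where $k$ is the number of $1$s in $(\undl{a}, \undl{b})$) sends the composite morphism of $\square(k+1)$ singled out by $\gamma$ to the corresponding composite in $w(k+1)$. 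This is built into the construction: $\kappa(k+1)$ was defined to satisfy $\kappa(k+1)(f \circ_j g) = \kappa(k+1)(f) \circ_j \kappa(k+1)(g)$ on arbitrary composites, so there is nothing further to check.

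I expect the $\gamma$ case to be the main obstacle, since it is the one that requires translating between the co-Segal pushout description of $\square^n$ and the combinatorial description of $\kappa(n)$; the remaining cases are routine unwindings of \Cref{prp:cube-structure} and of the formula defining $\kappa(n)$.
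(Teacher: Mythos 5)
There is a genuine gap, and it occurs at the very first step: the identification ``$w(\undl{t}) \cong w(k)$ where $k$ is the total number of $1$s in $\undl{t}$'' is false. \Cref{lmm:square-segal}(3) lets you delete $0$-coordinates for $\square^n$ (because $[0]$ is the unit of $\otimes$), but the walking-cell functor $w$ obeys a different rule: by property (b) of \Cref{rmk:walking-morphisms}, $w(\undl{r},0,\undl{s}) = w(\undl{r},\undl{0})$, i.e.\ $w$ forgets everything \emph{after the first $0$}. Concretely, $\square^n(1,0,1) \cong \square(2)$ is the lax square, while $w(1,0,1) = w(1) = [1]$, not $w(2)$. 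So your prescription $\kappa_{\undl{t}} := \kappa(k)$ does not even have the stated codomain on such tuples, and the ensuing naturality checks are carried out against the wrong targets. For instance, for the face map $\sigma$ in the first coordinate with $n=2$, the square forces the restriction of $\kappa(2)$ to the left vertical edge of $\square(2)$ to factor through $w(0,1) = [0]$, i.e.\ to be constant at the source object of $w(2)$ -- which is what $\kappa(2)$ actually does -- whereas under your identification $w(0,1) \cong w(1)$ the required compatibility is with a non-constant map and fails. The correct definition (the one the paper uses) is: for elementary $\undl{t} = (\undl{1^{k-1}},0,\undl{s})$, first apply the projection $\square(\undl{t}) \to \square(\undl{1^{k-1}},\undl{0}) \simeq \square(k-1)$ collapsing all trailing directions, and only then apply $\kappa(k-1)$. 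The mismatch between the cubical behavior of $\square^n$ and the globular behavior of $w$ is precisely the content of the proposition, so it cannot be elided.

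Two further points would still need attention even after fixing the components. First, your incidental claim that inserting a constant coordinate $p^0$ or $p^1$ leaves $(k_f,l_f)$ unchanged is only true when the insertion happens after the initial block of $I$s; insertions inside that block change $(k_f,l_f)$, which is exactly why the face cases in the leading coordinates behave differently from those in the trailing ones. Second, the active case $\vphi = \gamma$ is not automatic from ``$\kappa(n)$ preserves composites'': the map $\square^n(\gamma)$ sends a generating face to a genuine composite of faces in the glued double cube, and one must check (as the paper does, splitting into the case where the doubled coordinate follows a $0$, where everything reduces to a projection, and the case where it lies in the leading block of $1$s, where one verifies on generators that $\kappa(n)$ is compatible with the pushout decomposition $\square(\undl{1},2,\undl{1}) \cong \square(n) \sqcup_{\square(\undl{1},0,\undl{1})} \square(n)$) that collapsing the composite agrees with $w(\gamma)$ applied to the collapse. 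That check is routine but it is a check, not a tautology of the definition.
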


\begin{proof}
	First we define $\kappa_{\undl{t}}$ on elementary $n$-tuples $\undl{t}$ as follows: if $\undl{t} = (\undl{1}, 0, \undl{s})$ with $\undl{s}$ an elementary $(n-k)$-tuple (so there are $k-1$ many $1$s at the start of $\undl{t}$), then
	\begin{equation*}
		\kappa_{\undl{t}} : \square(\undl{t}) \to \square(\undl{1}, \undl{0}) \simeq \square(k-1) \xlongrightarrow{\kappa(k-1)} w(k-1) \simeq w(\undl{1}, \undl{0}) \simeq w(\undl{t})
	\end{equation*}
	where the first map is induced by the projection $\undl{s} \to \undl{0}$, the second is the equivalence of \Cref{lmm:square-segal}, and the last two are the equivalences in \Cref{rmk:walking-morphisms}. Now we can define $\kappa_{\undl{t}}$ for non-elementary $\undl{t}$ by extending along the Segal decomposition of $\square(\undl{t})$ and $w(\undl{t})$. Using the lemma, to prove that $\kappa_{\undl{t}}$ is natural in $\undl{t}$ it will be enough to prove that the square
	\begin{equation*}
		\begin{tikzcd}
			\square^n(\undl{s}) \ar[r, "\kappa_{\undl{s}}"] \ar[d, "\square^n(\vphi)"'] & w(\undl{s}) \ar[d, "w(\vphi)"] \\
			\square^n(\undl{t}) \ar[r, "\kappa_{\undl{t}}"] & w(\undl{t})
		\end{tikzcd}
	\end{equation*}
	commutes for all $\undl{s}, \undl{t}, \vphi$ with the following properties:
	\begin{enumerate}
		\item $\undl{s} = (\undl{a}, s_i, \undl{b})$ and $\undl{t} = (\undl{a}, t_i, \undl{b})$ with $\undl{a}$ and $\undl{b}$ both elementary tuples;
		\item $\vphi_i : [s_i] \to [t_i]$ is one of $\sigma$, $\tau$, $\rho$, or $\gamma$;
		\item $\vphi_j = \id$ for all $j \neq i$.
	\end{enumerate}
	The cases $\vphi_i \in \{\sigma, \tau, \rho\}$ follow immediately from the definition of $\kappa_{\undl{t}}$. For $\vphi_i = \gamma$, $\undl{s} = (\undl{a}, 1, \undl{b})$, and $\undl{t} = (\undl{a}, 2, \undl{b})$ we have two cases
	\begin{enumerate}
		\item If $\undl{a}$ starts with $k$ many $1$s followed by at least one $0$ then we can rewrite the diagram as
		\begin{equation*}
			\begin{tikzcd}
				\square(\undl{a}, 1, \undl{b}) \ar[r] \ar[d] & \square(k) \ar[d, equal] \ar[r] & w(k) \ar[r, "\cong"] \ar[d, equal] & w(\undl{a}, 1, \undl{b}) \ar[d, "\cong"] \\
				\square(\undl{a}, 2, \undl{b}) \ar[r, "f"] & \square(k) \ar[r] & w(k) \ar[r, "\cong"] & w(\undl{a}, 2, \undl{b})
			\end{tikzcd}
		\end{equation*}
		where $f$ is the map
		\begin{equation*}
			\square(\undl{a}, 2, \undl{b}) \cong \square(\undl{a}, 1, \undl{b}) \sqcup_{\square(\undl{a}, 0, \undl{b})} \square(\undl{a}, 1, \undl{b}) \to \square(k) \sqcup_{\square(k)} \square(k) \simeq \square(k).
		\end{equation*}
		This ends up being equivalent to the projection $\square(\undl{a}, 2, \undl{b}) \to \square(k)$ induced by the map $(\undl{a}, 2, \undl{b}) \to (\undl{1}, 0, \dotsc, 0)$, so the left square commutes. The other two obviously commute as well, and so the original diagram commutes.
		\item If $\undl{a} = \undl{1}$ then we may as well assume $\undl{b} = \undl{1}$, since the case where $\undl{b}$ contains a $0$ can be turned into a lower dimensional case where $\undl{b}$ does not contain any $0$s. Assume the $2$ appears in coordinate $k + 1$. Then the diagram reads
		\begin{equation*}
			\begin{tikzcd}
				\square(n) \ar[rrr, "\kappa(n)"] \ar[d] & & & w(n) \ar[d] \\
				\square(n) \sqcup_{\square(\undl{1}, 0, \undl{1})} \square(n) \ar[r] & \square(n) \sqcup_{\square(k)} \square(n) \ar[rr, "\kappa(n) \sqcup_{\kappa(k)} \kappa(n)"'] & & w(n) \sqcup_{w(k)} w(n)
			\end{tikzcd}
		\end{equation*}
		That this diagram commutes can be verified on the generators of $\square(n)$ and it's mainly just an exercise in understanding the notation, so we leave it to the reader. \qedhere
	\end{enumerate}
\end{proof}

\begin{exm}
	To see how $\kappa$ is defined on elementary representables in low dimensional cases we refer the reader to \Cref{exm:kappa}. There, the left red face is sent to the left red vertex of $w(3)$, and the map doing that is precisely $\kappa_{0,1,1} : \kappa(0,1,1) \to w(0,1,1) \cong w(0)$; similarly for the right olive face. The front face is collapsed to an edge via the map $\kappa_{1,0,1}$, and similarly for the back face, while the top and bottom faces are sent to copies of $w(2)$ via $\kappa_{1,1,0}$.
\end{exm}

\subsection{Higher square functor}

\begin{dfn}
	The \emph{higher square functor} $\Sq^n : \Glb^n(\Spaces) \to \Seg^n(\Spaces)$ is defined by currying the functor
	\begin{equation*}
		\bD^{n, \op} \times \GlbSeg^{n,n}(\Spaces) \xlongrightarrow{(\square^n)^\op \times \id} \fCat_{(\infty,n)}^\op \times \GlbSeg^{n,n}(\Spaces) \sub \GlbSeg^{n,n}(\Spaces)^\op \times \GlbSeg^{n,n}(\Spaces) \xlongrightarrow{\Map(-,-)} \Spaces
	\end{equation*}
	to a functor $\GlbSeg^{n,n}(\Spaces) \to \Fun(\bD^{n,\op}, \Spaces)$, which ends up landing in $\Seg^n(\Spaces)$.
\end{dfn}

\begin{rmk}
	For any $X \in \Glb^n(\Spaces)$ the $\undl{t}$-morphisms of $\Sq^n(X)$ satisfy
	\begin{equation*}
		\Sq^n(X)_{\undl{t}} \simeq \Map(\square^n(\undl{t}), X).
	\end{equation*}
	Thus, given the description of \Cref{prp:cube-structure}, we can think of points of $\Sq^n(X)_{\undl{t}}$ as certain iterated cubical diagrams in $X$.
\end{rmk}

\begin{exm} \label{exm:sq2}
	In the case $n = 2$ the $(a,b)$-morphisms of $\Sq^2(X)$ are easy to draw in terms of the morphisms of $X$: they are diagrams in $X$ of the form
	\begin{equation*}
		\begin{tikzcd}[sstyle]
			\bullet \ar[r] \ar[d] & \bullet \ar[r] \ar[d] \ar[dl, phantom, "\rotatebox{-45}{$\Downarrow$}" description] & \bullet \ar[r] \ar[d] \ar[dl, phantom, "\rotatebox{-45}{$\Downarrow$}" description] & \bullet \ar[d] \ar[dl, phantom, "\rotatebox{-45}{$\Downarrow$}" description] \\
			\bullet \ar[r] \ar[d] & \bullet \ar[r] \ar[d] \ar[dl, phantom, "\rotatebox{-45}{$\Downarrow$}" description] & \bullet \ar[r] \ar[d] \ar[dl, phantom, "\rotatebox{-45}{$\Downarrow$}" description] & \bullet \ar[d] \ar[dl, phantom, "\rotatebox{-45}{$\Downarrow$}" description] \\
			\bullet \ar[r] & \bullet \ar[r] & \bullet \ar[r] & \bullet \\
		\end{tikzcd}
	\end{equation*}
	with $a$ columns and $b$ rows of $2$-morphsims $\Rightarrow$, each one going between two composite $1$-morphisms. The composition of two diagrams is given by pasting them together and then composing the resulting $2$-morphisms. 
\end{exm}

\begin{rmk}
	Recall that there is a natural transformation $\kappa : \square^n \to w$ (\Cref{prp:kappa-existence}). From the definition of $\Sq^n$ we see that $\kappa$ induces a natural transformation of functors from the inclusion $\GlbSeg^{n,n}(\Spaces) \sub \Seg^n(\Spaces)$ to the higher square functor $\Sq^n : \GlbSeg^{n,n}(\Spaces) \to \Seg^n(\Spaces)$, which we denote by $\kappa^\ast$. In particular we have a map $\kappa^\ast_X : X \to \Sq^n(X)$ for any $X \in \GlbSeg^{n,n}(\Spaces)$ which we call the \emph{canonical inclusion}. Indeed, it can be shown that $\kappa_X^\ast$ induces an equivalence $X \simeq U^n \Sq^n(X)$ between $X$ and the underlying fully globular $n$-uple Segal space of $\Sq^n(X)$.
\end{rmk}

\begin{exm}
	In the case $n = 2$, by examining \Cref{exm:kappa} we see that the canonical inclusion sends a $2$-morphism $\alpha : w(2) \to X$ of $X$ to the $(1,1)$-morphism $\kappa^2_X(\alpha) \simeq \alpha \circ \kappa(2) : \square^2 \to w(2) \to X$ of $\Sq^2(X)$ obtained by thickening $\alpha$ in the vertical direction:
	\begin{equation*}
		\begin{tikzcd}[sstyle]
			{\color{red} \bullet} \ar[rr, bend left = 60, color = blue] \ar[rr, bend right = 60, color = yellow] \ar[rr, phantom, "\Downarrow" description, color = green] & & {\color{olive} \bullet}
		\end{tikzcd}
		\quad \mapsto \quad
		\begin{tikzcd}[sstyle]
			{\color{red} \bullet} \ar[r, color = blue] \ar[d, color = red, equal] & {\color{olive} \bullet} \ar[d, color = olive, equal] \ar[dl, phantom, "\rotatebox{-45}{$\Downarrow$}" description, color = green] \\
			{\color{red} \bullet} \ar[r, color = yellow] & {\color{olive} \bullet}
		\end{tikzcd}
	\end{equation*}
	For arbitrary $n$ the picture is very similar: an $n$-morphism is taken to a thickened version of itself that fits in a cubical mold.
\end{exm}

Next we introduce \emph{signatures} for the square functor. 
\begin{dfn}
	Consider the walking span
	\begin{equation*}
		\Lambda : \, \quad - \leftarrow 0 \rightarrow +
	\end{equation*}
	and its iterated product $\Lambda^n$. An object of $\undl{b} \in \Lambda^n$, called a \emph{signature}, is an $n$-tuple valued in $\{-,0,+\}$. For each symbol $s \in \{-, 0, +\}$ we define a functor $F_s : \bD \to \bD$: 
	\begin{itemize}
		\item $F_+$ is the identity functor;
		\item $F_{-}$ is the functor reversing the linear order of each element of $\bD$;
		\item $F_0$ is the constant functor on $[0]$.
	\end{itemize}
	Thus for every signature $\undl{b} \in \Lambda^n$ we get a functor $F_{\undl{b}} = \prod_i F_{b_i} : \bD^n \to \bD^n$.
\end{dfn}

\begin{dfn}
	The \emph{higher square functor with signature $\undl{b} \in \Lambda^n$}, denoted $\Sq^n_{\undl{b}} : \GlbSeg^{n,n}(\Spaces) \to \Seg^n(\Spaces)$, is defined by currying the functor
	\begin{equation*}
		\bD^{n, \op} \times \GlbSeg^{n,n}(\Spaces) \xlongrightarrow{((\square^n)^\op \circ F_{\undl{b}}^\op) \times \id} \fCat_{(\infty,n)}^\op \times \GlbSeg^{n,n}(\Spaces) \sub \GlbSeg^{n,n}(\Spaces)^\op \times \GlbSeg^{n,n}(\Spaces) \xlongrightarrow{\Map(-,-)} \Spaces
	\end{equation*}
	to a functor $\GlbSeg^{n,n}(\Spaces) \to \Fun(\bD^{n,\op}, \Spaces)$ which ends up landing in $\Seg^n(\Spaces)$.
\end{dfn}

\begin{rmk} \label{rmk:explicit-sq}
	Explicitly, say that $\undl{b}$ has a $0$ in coordinates $z_1 < z_2 < \dotsb < z_p$ and a $-$ in coordinates $m_1 < m_2 < \dotsc < m_q$; then
	\begin{equation*}
		\Sq^n_{\undl{b}}(X) \simeq I_{z_p} \dotsm I_{z_2} I_{z_1} ( \Sq^{n-p}(\tau_{n-p} X)^{\op_{m_1}, \op_{m_2}, \dotsc, \op_{m_q}} )
	\end{equation*}
	where $I_r : \Seg^k(\Spaces) \to \Seg^{k+1}$ is the left adjoint of the evaluation functor at $[0]$ on the $r$th factor of $\bD$. This follows from the fact that $\square^n(\undl{t},\undl{0}) \simeq \square^{n-p}(\undl{t})$ for any $(n-p)$-tuple $\undl{t}$. In particular, for any $\undl{c} = (\undl{a}, \undl{b}) \in \Lambda^{m} \times \Lambda^n$ we have
	\begin{equation*}
		\tau_m \Sq^{m+n}_{\undl{c}}(X) \simeq \Sq^{m+n}_{\undl{a},\undl{0}}(X) \simeq \Sq^{m}_{\undl{a}}(\tau_m X).
	\end{equation*}
\end{rmk}

\begin{rmk}
	The assignment $\undl{b} \mapsto \Sq^n_{\undl{b}}$ is functorial over $\Lambda^n$: there are maps
	\begin{equation*}
		\Sq^n_{b_1, \dotsc, 0, \dotsc, b_n}(X) \to \Sq^n_{b_1, \dotsc, -, \dotsc, b_n}(X), \quad \Sq^n_{b_1, \dotsc, 0, \dotsc, b_n}(X) \to \Sq^n_{b_1, \dotsc, +, \dotsc, b_n}(X),
	\end{equation*}
	where we changed the $i$th coordinate, and these maps pairwise commute for different values of $i$. 
\end{rmk}

\subsection{Zigzagification}

Fix $X \in \GlbSeg^{n,n}(\Spaces)$. Since in particular $X \in \GlbSeg^{n+1,n+1}(\Spaces)$, for every $\undl{b} \in \Lambda^n$ we can construct the $(n+1)$-uple Segal space $\Sq^{n+1}_{\undl{b},+}(X)$, naturally in $\undl{b}$ and $X$. Let $Z^{n+1}_+(X) \in \Fun(\bD^{n+1,\op}, \Spaces)$ denote the presheaf obtained by taking the pointwise colimit of $\Sq^{n+1}_{\undl{b},+}(X)$:
\begin{equation*}
	Z^{n+1}_+(X)_{\undl{t}} := \colim_{\undl{b} \in \Lambda^n} \left( \Sq^{n+1}_{\undl{b},+} (X)_{\undl{t}} \right).
\end{equation*}

Note that $Z^{n+1}_+(X)$ is \emph{not} an $(n+1)$-uple Segal space, since the colimit was taken only pointwise. At this point we could apply the Segalification functor $\LL^{n+1}$ from \Cref{sec:segalification} but doing so naively would not yield the desired result. Instead, consider the following:

\begin{dfn} \label{dfn:zigzag}
	Let $X \in \GlbSeg^n(\Spaces)$. The \emph{(positive) zigzagification of $X$} is
	\begin{equation*}
		\zig_{+}^{n+1}(X) := (R_{n+1}^{n+1} \LL_{n+1} R_{n}^{n+1} \LL_{n} \dotsb R_{1}^{n+1} \LL_{1}) (Z^{n+1}_+(X)),
	\end{equation*}
	which is a globular $(n+1)$-uple Segal space by \Cref{crl:alternate-segalification}.
\end{dfn}

\begin{rmk}
	The canonical inclusion $\kappa_X^\ast : X \to \Sq^{n+1}(X)$ extends to a map $X \to \zig_{+}^{n+1}(X)$ of globular $n$-uple Segal spaces: this is because $X$ is already globular and Segal, and so applying $R_i^n$ and $\LL_j$ does not affect it.
\end{rmk}

\begin{rmk}
	There is a \emph{negative} zigzagification $\zig_{-}^{n+1}(X)$ of $X$ as well, obtained by replacing the $+$ with a $-$ in the definition of $Z^{n+1}_+(X)$. It can be readily seen that $Z^{n+1}_{-}(X) \simeq (Z^{n+1}_+(X))^{\op_{n+1}}$ and so $\zig_-^{n+1}(X) \simeq \zig_{+}^{n+1}(X)^{\op_{n+1}}$ since segalification and globularization commute with taking opposites.
\end{rmk}

\begin{prp} \label{prp:truncation-zigzag}
	For any $0 \leq k \leq n$ we have
	\begin{equation*}
		\tau_k \zig_{+}^{n+1}(X) \simeq (R_k^k \LL_k \dotsb R_1^k \LL_1)(\tau_k Z^{n+1}_+(X)) \simeq (R_k^k \LL_k \dotsb R_1^k \LL_1)(Z^{k}(\tau_k X))
	\end{equation*}
	where $Z^{k}(Y)_{\undl{t}} \simeq \displaystyle \colim_{\undl{b} \in \Lambda^k} \left( \Sq^{k}_{\undl{b}} (Y)_{\undl{t}} \right)$ for any $Y \in \GlbSeg^{k,k}(\Spaces)$.
\end{prp}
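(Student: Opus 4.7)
The plan is to establish three ingredients and assemble them. First, I claim that for $j > k$ both $R_j^{n+1}$ and $\LL_j$ are absorbed by $\tau_k$: $\tau_k R_j^{n+1} Y \simeq \tau_k Y$ and $\tau_k \LL_j Y \simeq \tau_k Y$. Second, for $j \leq k$ both operators commute with $\tau_k$ in the sense that $\tau_k R_j^{n+1} Y \simeq R_j^k \tau_k Y$ and $\tau_k \LL_j Y \simeq \LL_j \tau_k Y$. Third, $\tau_k Z^{n+1}_+(X) \simeq Z^k(\tau_k X)$. The first two ingredients allow me to move $\tau_k$ inward past the compositions $R_j^{n+1} \LL_j$, one pair at a time, yielding the first equivalence in the statement; ingredient (iii) then gives the second equivalence.

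The statements about $\LL_j$ follow directly from the necklace formula (\Cref{fct:formula-segalification}): Segalification preserves the value at $[0]$ in its own coordinate (giving absorption when $j > k$) and operates independently of the other coordinates (giving commutation when $j \leq k$, since the colimit formula in coordinate $j$ commutes with the evaluation at $[0]$ in coordinates $> k$). The statements about $R_j^{n+1}$ will use the pullback description of \Cref{prp:ra-as-pullback}. For $j > k$, evaluating $R_j^{n+1}(Y)$ with $0$ in the $j$th slot returns $Y$ evaluated with zeros in positions $j$ through $n+1$, so iterating this from $j = n+1$ down to $j = k+1$ collapses to $\tau_k Y$. For $j \leq k$, the pullback $R_j^{n+1} Y \simeq Y \times_{r(eY)} r(\tau_{j-1}Y)$ is preserved by the limit-preserving functor $\tau_k$, and the three operators $r$, $e$, $\tau_{j-1}$ restrict along $\tau_k$ to their $k$-dimensional analogues, identifying the truncated pullback with $R_j^k \tau_k Y$.

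For the third ingredient, \Cref{rmk:explicit-sq} gives $\tau_k \Sq^{n+1}_{(\undl{b},+)}(X) \simeq \Sq^k_{\undl{b}'}(\tau_k X)$, where $\undl{b}' \in \Lambda^k$ is the projection of $\undl{b} \in \Lambda^n$ onto its first $k$ entries. Since $\tau_k$ preserves the pointwise colimit defining $Z^{n+1}_+$, this yields $\tau_k Z^{n+1}_+(X)_{\undl{t}} \simeq \colim_{\undl{b} \in \Lambda^n} \Sq^k_{\undl{b}'}(\tau_k X)_{\undl{t}}$. Writing $\Lambda^n \simeq \Lambda^k \times \Lambda^{n-k}$ and applying Fubini for colimits, the inner colimit over $\Lambda^{n-k}$ is that of a constant diagram and collapses because $\Lambda^{n-k}$ is contractible (each factor $\Lambda$ being a V-shaped poset with initial vertex), leaving $\colim_{\undl{c} \in \Lambda^k} \Sq^k_{\undl{c}}(\tau_k X) = Z^k(\tau_k X)$. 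The main obstacle I expect is verifying the commutation $\tau_k R_j^{n+1} \simeq R_j^k \tau_k$ for $j \leq k$: although the pullback description makes it plausible, matching up the right adjoints, evaluations, and truncations across the $(n+1)$- and $k$-dimensional ambient functor categories requires a careful unpacking of how each piece restricts along $\tau_k$.
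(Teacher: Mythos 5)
Your proposal is correct and follows essentially the same two-step strategy as the paper's proof: first push $\tau_k$ past the chain $R_{n+1}^{n+1}\LL_{n+1}\dotsm R_1^{n+1}\LL_1$, then use the formula for $\Sq^{m+n}_{\undl{a},\undl{b}}$ under truncation together with the contractibility of $\Lambda^{n-k}$ to collapse the colimit defining $Z^{n+1}_+$. The paper compresses your ingredients (i) and (ii) into the single clause ``$\LL_j$ and $R_j^m$ commute with evaluation at $[0]$,'' so your version is a welcome elaboration that correctly distinguishes the absorption case $j>k$ from the genuine commutation case $j\leq k$ (where the superscript of $R$ must drop from $n+1$ to $k$), and it also fixes the indexing of the signature decomposition (the paper's $\Lambda^{n-k-1}$ should read $\Lambda^{n-k}$).
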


\begin{proof}
	The first equivalence is a consequence of the fact that $\LL_j$ and $R_j^m$ commute with evaluation at $[0]$ for any $j$ and $m$. For the second equivalence, write
	\begin{equation*}
		Z_+^{n+1}(X)_{\undl{t}} \simeq \colim_{\undl{c} \in \Lambda^{n-k-1}} \colim_{\undl{b} \in \Lambda^k} \left( \Sq^{n+1}_{\undl{b},\undl{c},+}(X) \right)_{\undl{t}}.
	\end{equation*}
	Then applying $\tau_k$ turns each $\Sq^{n+1}_{\undl{b},\undl{c},+}(X)$ into $\Sq^{k}_{\undl{b},\undl{0}}(X) \simeq \Sq^k_{\undl{b}}(\tau_k X)$ by \Cref{rmk:explicit-sq} and the outermost colimit becomes a constant colimit at $\Sq^k_{\undl{b}}(\tau_k X)$.
\end{proof}

Using the formula for $\LL_k$ given in \Cref{fct:formula-segalification} and the previous proposition we can describe $\zig_{+}^{n+1}(X)$ informally as follows:
\begin{itemize}
	\item its objects are the objects of $X$;
	\item its $1$-morphisms are zigzags $1$-morphisms of $X$;
	\item its $2$-morphisms are obtained by taking all lax commutative squares in $X$, forming globular zigzags in the first direction, and then forming zigzags of those in the second direction;
	\item its $3$-morphisms are obtained by taking all lax commuative cubes in $X$, forming globular zigzags in the first direction, then forming globular zigzags of those in the second direction, and finally forming zigzags of those in the third direction;
	\item its $k$-morphisms, in general, are iterated zigzags of lax commutative $k$-cubes in $X$ where the zigzags are taken in order from the first direction to the $k$th direction.
\end{itemize}

\section{Free adjoints and zigzagification} \label{sec:properties}

In this section we will prove that the functor $\zig_{+}^2 : \GlbSeg^{1,1}(\Spaces) \to \GlbSeg^{2,2}(\Spaces)$ freely adds right adjoints to the morphisms of a Segal space. The main technical hurdle is to show that, in a technical sense, $\zig_+^2(X)$ is generated by the adjunction data. We first prove some helpful results about generating spaces in globular Segal spaces.

\subsection{Some results on generators}

\begin{dfn}
    Let $X \in \GlbSeg^{n,n}(\Spaces)$ be a globular $n$-uple Segal space. Let $A \hookrightarrow X_{\undl{1^n}}$ be a monomorphism. Recall that the Segal condition gives us composition operations $- \circ_j -$ on $X_{\undl{1^n}}$ for $1 \leq j \leq n$. We say that $A$ \emph{generates $X$ under composition} if it has the following properties:
    \begin{enumerate}
        \item for all $0 \leq k \leq n-1$, the iterated identity map $\id(-) : X_{\undl{1^k}, \undl{0^{n-k}}} \to X_{\undl{1^n}}$ factors through $A$;
        \item the smallest subspace of $X_{\undl{1^n}}$ which contains $A$ and is closed under all the composition operations is $X_{\undl{1^n}}$.
    \end{enumerate}
    We call the $k$-morphisms $f \in A$ the \emph{generating $k$-morphisms}.
\end{dfn}

\begin{rmk} \label{rmk:extension-to-el}
    The first condition implies that $A$ is closed under the source and target maps, as those land in the morphisms of dimension strictly lower than $n$. In particular, if $\bD^{n,\op}_{\el}$ denotes the full subcategory of $\bD^{n,\op}$ spanned by the elementary $n$-tuples (recall that these are the ones whose entries are in $\{0,1\}$), then $A$ determines a functor $A^\el : \bD^{n,\op}_{\el} \to \Spaces$ with $A^\el_{\undl{1^n}} \simeq A$ and $A^{\el}_{\undl{e}} \simeq X_{\undl{e}}$ for any other elementary $\undl{e} \neq \undl{1^n}$.
\end{rmk}

\begin{rmk} \label{rmk:extension-to-simp}
    Let $X \in \GlbSeg^{n,n}(\Spaces)$. Any functor $W : \bD^{n,\op}_{\el} \to \Spaces$ with a map to $W \to X\vert_{\bD^{n,\op}_{\el}}$ can be extended to a functor $M(W) : \bD^{n,\op} \to \Spaces$ with a map $M(W) \to X$ that returns the given map upon restricting to $\bD^{n,\op}_{\el}$. Indeed, for any $[\undl{t}] \in \bD^{n, \op}$ we can define
    \begin{equation*}
        M(W)_{\undl{t}} := \left( \lim_{[\undl{e}] \to [\undl{t}]} W_{\undl{e}} \right) \times_{\left( \displaystyle \lim_{[\undl{e}] \to [\undl{t}]} X_{\undl{e}} \right)} X_{\undl{t}} \hookrightarrow X_{\undl{t}}
    \end{equation*}
    where the limits are taken over elementary $n$-tuples $\undl{e}$; it's not hard to see that $M(W)_{\undl{t}}$ is natural in $\undl{t}$ and that the projections to $X_{\undl{t}}$ define the induced map $M(W) \hookrightarrow X$. In the case $W = A^{\el}$, $M(A^{\el})$ is the $n$-simplicial space whose value at $\undl{t}$ is the subspace of $X_{\undl{t}}$ all whose elementary components (the projections to $X_{\undl{e}}$) land in $A$.
\end{rmk}

\begin{prp}
    Let $X \in \GlbSeg^{n,n}(\Spaces)$ and let $A \hookrightarrow X_{\undl{1^n}}$ generate $X$ under composition. Then there is a sequence of $n$-simplicial spaces
    \begin{equation*}
        M(A^{\el}) = M(A(1)^{\el}) \hookrightarrow M(A(2)^{\el}) \hookrightarrow M(A(3)^{\el}) \hookrightarrow \dotsb
    \end{equation*}
    whose colimit is equivalent to $X$.
\end{prp}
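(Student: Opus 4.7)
My plan is to construct the sequence by iteratively closing $A$ under the composition operations of $X$, and then to use the functor $M$ from \Cref{rmk:extension-to-simp} to promote each step into an $n$-simplicial subspace of $X$ whose colimit exhausts all of $X$.

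First, I would define $A(k) \hookrightarrow X_{\undl{1^n}}$ inductively: set $A(1) := A$, and let $A(k+1)$ be the image of the map
\[
A(k) \,\sqcup\, \bigsqcup_{j=1}^n \bigl(A(k) \times_{X_{\undl{1^{j-1}}, \undl{0^{n-j+1}}}} A(k)\bigr) \longrightarrow X_{\undl{1^n}}
\]
obtained by combining the inclusion $A(k) \hookrightarrow X_{\undl{1^n}}$ with each Segal composition $-\circ_j -$ (note that the source/target in direction $j$ lies in $X_{\undl{1^{j-1}}, \undl{0^{n-j+1}}}$ by globularity). This yields an ascending chain of subspaces $A(1) \hookrightarrow A(2) \hookrightarrow \dotsb$ of $X_{\undl{1^n}}$.

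Next, I would verify that $B := \colim_k A(k)$ equals $X_{\undl{1^n}}$. By construction $B$ contains $A$, so by the generation hypothesis it suffices to check that $B$ is closed under each $-\circ_j -$. Since sequential colimits in $\Spaces$ commute with the finite pullback appearing in the composition span, one has
\[
B \times_{X_{\undl{1^{j-1}}, \undl{0^{n-j+1}}}} B \;\simeq\; \colim_k \bigl( A(k) \times_{X_{\undl{1^{j-1}}, \undl{0^{n-j+1}}}} A(k) \bigr),
\]
and each term on the right maps into $A(k+1) \subseteq B$ by definition. Minimality then forces $B \simeq X_{\undl{1^n}}$.

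Finally, I would use \Cref{rmk:extension-to-el} to extend the chain to subfunctors $A(k)^{\el} \hookrightarrow X\vert_{\bD^{n,\op}_{\el}}$, and then \Cref{rmk:extension-to-simp} to obtain the required chain of monomorphisms $M(A(k)^{\el}) \hookrightarrow X$ in $\Fun(\bD^{n,\op}, \Spaces)$. To identify $\colim_k M(A(k)^{\el})$ with $X$, I evaluate at $[\undl{t}]$ and commute the sequential colimit past the finite limit defining $M(-)_{\undl{t}}$: at elementary $\undl{e} \neq \undl{1^n}$ the values $A(k)^{\el}_{\undl{e}}$ are constantly $X_{\undl{e}}$, while at $\undl{e} = \undl{1^n}$ the colimit is $B \simeq X_{\undl{1^n}}$ by the previous step, and the pullback in the formula for $M$ collapses to give $X_{\undl{t}}$. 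The main subtlety I anticipate is justifying this last commutation, since the indexing category of elementary tuples mapping to $[\undl{t}]$ is a priori infinite; a finite cofinal subdiagram suffices thanks to the Segal condition on $X$, but this should be spelled out carefully.
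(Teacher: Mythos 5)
Your proof is correct, and it takes a genuinely different route from the paper's at two points. First, you define the filtration $A(k)$ inductively as a one‑step closure under each $-\circ_j-$, whereas the paper defines $A(p)$ as the image of the total composition map $c : X_{\undl{p^n}} \to X_{\undl{1^n}}$ restricted to the subspace of $X_{\undl{p^n}}$ whose elementary components all lie in $A$ (i.e.\ ``$p$-by-$\dotsb$-by-$p$ grid compositions''). The two towers have the same colimit, but the paper's definition requires an implicit padding-with-identities argument to see that every composite eventually appears in some $A(p)$, while your inductive definition makes closure under composition tautological and lets you conclude $\colim_k A(k) = X_{\undl{1^n}}$ directly from the minimality clause in the definition of ``generates.'' Second, for the passage from $\colim_k A(k) \simeq X_{\undl{1^n}}$ to $\colim_k M(A(k)^{\el}) \simeq X$, the paper argues via the filteredness of the colimit (every $\undl{t}$-morphism has all elementary components appearing at some finite stage), while you commute the sequential colimit through the limit defining $M(-)_{\undl{t}}$ pointwise; both work. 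The ``main subtlety'' you flag at the end, however, is not actually an issue: for fixed $[\undl{t}]$, the indexing category of the limit in the formula for $M(W)_{\undl{t}}$ — maps $[\undl{e}] \to [\undl{t}]$ with $\undl{e}$ elementary — is already finite (there are only $2^n$ elementary $n$-tuples, each with finitely many maps to $[\undl{t}]$), so sequential colimits commute with it without any appeal to cofinality or the Segal condition.
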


\begin{proof}
    Fix a $p \geq 1$ and let $I_p$ denote the category of inert maps $[\undl{e}] \to [\undl{p^n}]$ in $\bD^n$ such that $\undl{e}$ is an elementary $n$-tuple. Define the space
    \begin{equation*}
        A(p) := \mathrm{im} \left( \lim_{([\undl{e}] \to [\undl{p^n}]) \in I_p} A^{\el}_{\undl{e}} \to \lim_{([\undl{e}] \to [\undl{p^n}]) \in I_p} X_{\undl{e}} \xlongrightarrow{\simeq} X_{\undl{p^n}} \xlongrightarrow{c} X_{\undl{1^n}} \right) \hookrightarrow X_{\undl{1^n}} 
    \end{equation*}
    as the essential image of the total composition map $c : X_{\undl{p^n}} \to X_{\undl{1^n}}$ (induced by the function $[1] \cong \{0,p\} \to [p]$ in all coordinates) with its domain restricted so that the elementary components of each $f \in X_{\undl{p^n}}$ come from $A$. Informally (but correctly), we think of $A(p)$ as the subspace of $X_{\undl{1^n}}$ containing those morphisms which can be obtained starting from $A$ by doing at most $p$ iterated compositions in each direction (so a total of $n p$ compositions). Note that since $A$ generates $X$ and $A \sub A(p)$ for any $p$, $A(p)$ also generates $X$. Using \Cref{rmk:extension-to-el} and \Cref{rmk:extension-to-simp} we can extend $A(p)$ to $M(A(p)^{\el})$. It's clear that $A(1) \simeq A$ and hence that $M(A(1)^{\el}) \simeq M(A^{\el})$, and the induced maps come from the inclusions $A(p) \hookrightarrow A(p+1)$ obtained, for example, by composing with identities at the last step in every direction. 
    
    Finally we need $X \simeq \displaystyle \colim_{p \to \infty} M(A(p)^{\el})$. This is a filtered colimit, so it will be enough to show that every $\undl{t}$-morphism $f$ of $X$ appears at some point in $M(A(p)^{\el})_{\undl{t}}$ for some $p$. Indeed, by the Segal condition, it is sufficient for the elementary components $\{f_a\}_{a = 1, \dotsc, b}$ of $f$ (the order is irrelevant, the important thing is that all of them are listed) to appear at some stage; since each of those appears at some stage $p_a$ by the assumption that $A$ generates $X$, we can take $p = \max \{p_a\}$.
\end{proof}

\begin{prp}
    Let $X, Y \in \GlbSeg^{n,n}(\Spaces)$ and let $A \hookrightarrow X_{\undl{1^n}}$ generate $X$ under composition. Then the canonical map
    \begin{equation*}
        \Map(X, Y) \to \Map(M(A(\undl{1^n})^{\el}), Y)
    \end{equation*}
    is a monomorphism of spaces.
\end{prp}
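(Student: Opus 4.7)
The plan is to exploit the sequential colimit presentation of $X$ from the previous proposition and turn the question into an inductive one about a tower.

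First, since $X \simeq \colim_p M(A(p)^{\el})$ as a filtered (sequential) colimit in $\Fun(\bD^{n,\op},\Spaces)$, the universal property of colimits yields $\Map(X,Y) \simeq \lim_p \Map(M(A(p)^{\el}),Y)$. The map in the statement is then the projection from this sequential limit to its initial term $\Map(M(A(1)^{\el}),Y) = \Map(M(A^{\el}),Y)$. Because monomorphisms in $\Spaces$ are stable under arbitrary limits, it suffices to show that each transition map
\[
r_p : \Map(M(A(p+1)^{\el}),Y) \to \Map(M(A(p)^{\el}),Y)
\]
is a monomorphism; by a second induction, it then suffices to do $p=1$ and iterate.

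The key point is that the inclusion $M(A(p)^{\el}) \hookrightarrow M(A(p+1)^{\el})$ only introduces new information at the top elementary tuple $\undl{1^n}$: for elementary $\undl{e} \neq \undl{1^n}$ one has $A(q)^{\el}_{\undl{e}} = X_{\undl{e}}$ by definition, so the two presheaves coincide at such $\undl{e}$. At $\undl{1^n}$ the newly adjoined morphisms lie in $A(p+1) \setminus A(p)$, and by the definition of $A(p+1)$ every such morphism is the total composite of a $\undl{(p+1)^n}$-morphism $\tilde{g}$ of $X$ whose elementary components land in $A \sub A(p)$, so $\tilde{g}$ already lives in $M(A(p)^{\el})_{\undl{(p+1)^n}}$. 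Since $Y$ is Segal, its $\undl{(p+1)^n}$-morphisms are computed as an iterated fiber product of elementary morphisms; hence the image of $g$ in $Y$ is forced, \emph{as a property rather than extra data}, by the images of the elementary components of $\tilde{g}$, all of which are determined by the restriction to $M(A(p)^{\el})$.

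To turn this into a clean mono argument, I would realize $M(A(p+1)^{\el})$ as a colimit (an iterated pushout, indexed by the new top cells) obtained from $M(A(p)^{\el})$ by gluing in, for each new $g \in A(p+1) \setminus A(p)$, the representable $\square^n(\undl{(p+1)^n})$-shaped cell together with a chosen witness $\tilde g$, and then collapsing along the total-composite map. Mapping into a Segal object $Y$ turns this colimit into a limit; the Segal condition on $Y$ makes the pieces of the limit contractible over their boundary in $\Map(M(A(p)^{\el}),Y)$, which exhibits $r_p$ as pulled back from the projection out of a space that is $(-1)$-truncated over its base. The main obstacle is bookkeeping: making the pushout description of $M(A(p+1)^{\el})$ precise, so that the contractibility statement delivered by the Segal condition on $Y$ matches up with the simplicial identifications present in $M(A(p+1)^{\el})$ — in particular, one must verify that the two notions of ``witness of composition'' (as a $\undl{(p+1)^n}$-morphism in $X$ versus as a Segal filler in $Y$) agree up to contractible ambiguity, which is exactly the content of globularity together with the Segal condition.
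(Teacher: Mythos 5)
Your core mechanism is the paper's: present $X$ as the sequential colimit of the $M(A(p)^{\el})$ and use the Segal condition on $Y$ to see that the values on newly adjoined top cells are forced, because each new cell is the total composite of a witness whose elementary pieces come from $A$. The paper organizes this slightly differently (it reduces a map $F : X \to Y$ to its components via the end formula for natural transformations and determines $F$ on all of $A(p)$ at once from $F$ on $A(1)$, using the covering map $A(1) \times_{X_0} \dotsb \times_{X_0} A(1) \to A(p)$ and the Segal condition on $Y$), whereas you climb the tower one stage at a time and invoke stability of monomorphisms under the limit of the tower; that reduction is fine, and the two organizations are interchangeable.

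However, two points need repair. First, the assertion that a witness $\tilde g$ ``already lives in $M(A(p)^{\el})_{\undl{(p+1)^n}}$'' is false in general: membership in $M(W)_{\undl{t}}$ requires \emph{all} components indexed by maps from elementary tuples --- including the long composites along active maps --- to land in $W$, and the total composite of $\tilde g$ is only known to lie in $A(p+1)$. This is harmless, since the Segal condition on $Y$ only consumes the \emph{inert} elementary components of $\tilde g$, which do lie in $A \sub A(p)$, but the sentence as written is wrong. Second, and more substantively, your ``clean mono argument'' is deferred to an unproved presentation of $M(A(p+1)^{\el})$ as an iterated pushout over $M(A(p)^{\el})$ of representable $\undl{(p+1)^n}$-cells, one for each new top cell with a chosen witness. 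That presentation is not available as stated: $A(p+1)$ is defined as an essential image, and the witness of a given $g \in A(p+1)$ is neither unique nor canonical (the fiber of the total-composite map over $g$ need not be contractible), so at best one gets a \v{C}ech-type descent presentation rather than a cell attachment; moreover, restriction along an effective epimorphism is not automatically a monomorphism on mapping spaces, so even the descent version requires an argument rather than being formal. None of this machinery is needed: as in the paper, a map out of $M(A(p+1)^{\el})$ is determined by its compatible components, and naturality together with the Segal condition on $Y$ already forces the value on each new cell $g$ to be the composite in $Y$ of the values on the inert elementary components of any witness $\tilde g$, all of which are recorded by the restriction to $M(A^{\el})$. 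If you replace the pushout step by this componentwise argument, your proof goes through and coincides with the paper's.
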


\begin{proof}
    A map $F : X \to Y$ is uniquely determined by its compatible components $F_{\undl{t}} : X_{\undl{t}} \to Y_{\undl{t}}$ (see, for example, the end formula for natural transformations in \cite[Proposition 5.1]{GHN2017}). We will show that each of those components is uniquely determined by its restriction to $M(A(1)^{\el})_{\undl{t}}$. For notational simplicity we will fix $n$ to be $1$ (so that the tuple $\undl{t}$ becomes just $t$), trusting that the reader won't have any troubles extending the argument to an arbitrary $n$. 

    By naturality and the Segal condition, the map $F_t : X_t \to Y_t$ is uniquely determined by the maps $F_1$ and $F_0$; the latter is also determined by $F_1$ via the map $\id(-) : X_0 \to X_1$, so we will focus on the former. Since $X_1 \simeq \displaystyle \colim_{p \to \infty} A(p)$ we have that $F_1$ is determined by $F_1(p) : A(p) \to Y_1$. Consider now the composition map 
    \begin{equation*}
        \underbrace{A(1) \times_{X_0} \dotsb \times_{X_0} A(1)}_{\text{$p$ copies}} \to X_1 \times_{X_0} \dotsb \times_{X_0} X_1 \simeq X_{p} \to X_1
    \end{equation*}
    whose essential image is $A(p)$. Postcomposing with $F_1$ and using the Segal condition on $Y$ gives a commutative square
    \begin{equation*}
        \begin{tikzcd}
            \underbrace{A(1) \times_{X_0} \dotsb \times_{X_0} A(1)}_{\text{$p$ copies}} \ar[r, "F_1(1)^{\times p}"] \ar[d] & \underbrace{Y_1 \times_{Y_0} \dotsb \times_{Y_0} Y_1}_{\text{$p$ copies}} \ar[d] \\
            A(p) \ar[r, "F_1(p)"'] & Y_1
        \end{tikzcd}
    \end{equation*}
    which implies that $F_1(p)$ is uniquely determined by $F_1(1)$. Combining these observations, we deduce that $F : X \to Y$ is uniquely determined by $F\vert_{M(A(1)^\el)} : M(A(1)^\el) \to Y$, as needed.
\end{proof}

\subsection{Adjoints from zigzags of commutative squares}

Fix a Segal space $X$. We want to apply the above results to the case of $\zig^2_+(X)$ so that we can simplify the space of maps out of $\zig^2_+(X)$. Before tackling $\zig_{+}^2(X)$ directly, let's start by understanding $\Sq^2(X)$. Any $1$-morphism $f : x \to y$ of $X$ gives rise to two special $(1,1)$-morphisms of $\Sq^2(X)$, depicted as commutative squares $\square^2 \to X$ in $X$:
\begin{equation*}
	E_f := \,
	\begin{tikzcd}
		x \ar[r, "f"] \ar[d, "f"'] & y \ar[d, equal] \\
		y \ar[r, equal] & y
	\end{tikzcd}
	\qquad \qquad
	H_f := \,
	\begin{tikzcd}
		x \ar[r, equal] \ar[d, equal] & x \ar[d, "f"] \\
		x \ar[r, "g"'] & y
	\end{tikzcd}
\end{equation*}
By pasting the squares together we can see that they satisfy
\begin{equation*}
	E_f \circ_1 H_f \simeq \id_2(f^\to) \qquad \text{and} \qquad E_f \circ_2 H_f \simeq \id_1(f^\downarrow)
\end{equation*}
where $f^\to$ and $f^\downarrow$ are, respectively, the $(1,0)$-morphism and the $(0,1)$-morphism in $\Sq^2(X)$ induced by the $1$-morphism $f$ of $X$. In this situation say that $f^\to$ and $f^\downarrow$ are \emph{companions}, which is a notion that can be defined for arbitrary double Segal spaces (\Cref{dfn:companions}). In fact, $\Sq^2(X)$ is universal with respect to those double Segal spaces that admit certain companions. We will discuss this in more detail in \Cref{sec:extensions}.

\begin{rmk} \label{rmk:E-H-decomp}
	Part of proving the universality of $\Sq^2(X)$ involves showing that the $(1,1)$-morphisms of the form $E_f$, $H_f$, and $\id_2(f)$ generate all $(1,1)$-morphisms under composition. This follows from a straightforward calculation: for any commutative square determined by $g \circ_1 f \simeq k \circ_1 h$ in $X$ we have
	\begin{equation*}
		\begin{tikzcd}
			w \ar[r, "f"] \ar[d, "h"'] & x \ar[d, "g"] \\
			y \ar[r, "k"'] & z
		\end{tikzcd}
		\quad \simeq \quad
		\begin{tikzcd}
			w \ar[d, equal] \ar[r, "f"] & y \ar[d, equal] \ar[dr, phantom, "\circ_1"] & y \ar[d, equal] \ar[r, equal] & y \ar[d, "g"] \\
			w \ar[r, "f"] & y & y \ar[r, "g"] & z \ar[ddlll, phantom, "\circ_2"] \\ \\
			w \ar[r, "h"'] \ar[d, "h"'] & x \ar[d, equal] \ar[dr, phantom, "\circ_1"] & y \ar[r, "k"'] \ar[d, equal] & z \ar[d, equal] \\
			y \ar[r, equal] & y & y \ar[r, "k"'] & z
		\end{tikzcd}
		\quad \simeq \quad
		(\id_2(k^\to) \circ_1 E_h) \circ_2 (H_g \circ_1 \id_2(f^\to)).
	\end{equation*}
\end{rmk}

Analogous results hold in $\Sq^2_{-+}(X)$: for every $1$-morphism $f^{\op_1}$ of $X^{\op_1}$ (where we use the superscript $(-)^{\op_1}$ to denote that $f$ is the original $1$-morphism in $X$) there are $2$-morphisms $E_{f^{\op_1}}$ and $H_{f^{\op_1}}$ satisfying $H_{f^{\op_1}} \circ_1 E_{f^{\op_1}} \simeq \id_2((f^{\op_1})^\to)$ and $E_{f_{\op_1}} \circ_2 H_{f^{\op_1}} \simeq \id_{1}((f^{\op_1})^\downarrow)$, and these $(1,1)$-morphisms plus the identities generate the whole space of $(1,1)$-morphisms under composition. For notational convenience we will use $f^\leftarrow$ to denote $(f^{\op_1})^\to$. 

\begin{prp} \label{prp:adj-in-zig}
	For any $1$-morphism $f : x \to y$ in $X$ there is an adjunction $f^\to \dashv f^\leftarrow$ in $\zig_{+}^2(X)$ with unit $\eta = H_{f^{\op_1}} \circ_1 H_{f}$ and counit $\varepsilon = E_f \circ_1 E_{f^{\op_1}}$.
\end{prp}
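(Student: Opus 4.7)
The plan is to verify the two snake equations; the boundaries of $\eta$ and $\varepsilon$ as $\id(x) \Rightarrow f^\leftarrow \circ_1 f^\to$ and $f^\to \circ_1 f^\leftarrow \Rightarrow \id(y)$ follow immediately from pasting the relevant squares and observing that their extreme vertical legs are identities, so these composites genuinely define 2-morphisms in $\zig_{+}^2(X)$.

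The computation rests on two ingredients: the companion identities recalled just before the proposition, namely $E_f \circ_1 H_f \simeq \id_2(f^\to)$ and $E_f \circ_2 H_f \simeq \id_1(f^\downarrow)$ in $\Sq^2(X)$, together with their $\Sq^2_{-+}(X)$ analogues $H_{f^{\op_1}} \circ_1 E_{f^{\op_1}} \simeq \id_2(f^\leftarrow)$ and $E_{f^{\op_1}} \circ_2 H_{f^{\op_1}} \simeq \id_1(f^\downarrow)$; and the interchange law, which holds in the double Segal space underlying $\zig_{+}^2(X)$. For the first snake equation I would expand
\[
(\varepsilon \circ_1 \id(f^\to)) \circ_2 (\id(f^\to) \circ_1 \eta) = (E_f \circ_1 E_{f^{\op_1}} \circ_1 \id_2(f^\to)) \circ_2 (\id_2(f^\to) \circ_1 H_{f^{\op_1}} \circ_1 H_f)
\]
and apply interchange to rewrite this as a horizontal triple composite of vertical pairs. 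The outer two pairs simplify by absorption of $\id_2$ (a vertical identity), the middle pair collapses to $\id_1(f^\downarrow)$ by the $\Sq^2_{-+}(X)$ companion identity, and the horizontal absorption of $\id_1(f^\downarrow)$ (a horizontal identity) leaves $E_f \circ_1 H_f$, which is $\id_2(f^\to) = \id(f^\to)$ by the $\Sq^2(X)$ companion identity.

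The second snake equation is handled by the mirror argument: after expansion and interchange, the middle column collapses via $E_f \circ_2 H_f \simeq \id_1(f^\downarrow)$, and the resulting horizontal composite simplifies to $H_{f^{\op_1}} \circ_1 E_{f^{\op_1}} \simeq \id_2(f^\leftarrow) = \id(f^\leftarrow)$. The main obstacle is purely bookkeeping: tracking which 2-morphisms live in $\Sq^2(X)$ versus $\Sq^2_{-+}(X)$, and confirming that each horizontal composite in $\zig_{+}^2(X)$ as well as each application of interchange has matching boundary data. No deeper conceptual ideas are required beyond the companion identities and the interchange law.
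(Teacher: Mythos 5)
Your overall shape of the argument---verify boundaries, then compute the triangle identities using the companion relations $E_f \circ_1 H_f \simeq \id_2(f^\to)$, $E_f \circ_2 H_f \simeq \id_1(f^\downarrow)$ and interchange---is the natural first thing to try, and the paper explicitly flags it as the naive strategy in the remark immediately following the proposition. But the step ``apply interchange to rewrite this as a horizontal triple composite of vertical pairs'' has a genuine gap: the vertical pairs you form are not $2$-morphisms of $\zig_{+}^2(X)$, because they are not globular. Concretely, for the first snake identity the three columns would be $\id_2(f^\to)\circ_2 H_f$, $E_{f^{\op_1}}\circ_2 H_{f^{\op_1}}$, and $E_f\circ_2 \id_2(f^\to)$, and each of these has exactly one nontrivial vertical leg (equal to $f$), so none of them lands in the globular part. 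The definition
\[
\zig_{+}^2(X) = (R_2^2\,\LL_2\,R_1^2\,\LL_1)(Z^2_+(X))
\]
imposes globularity via $R_1^2$ \emph{before} the vertical Segalification $\LL_2$ is applied, so the $\circ_2$ operation in $\zig_{+}^2(X)$ only exists between globular horizontal zigzags; the intermediate columns your interchange step requires simply do not exist in the object you are working in. There is no larger ambient double Segal space in sight where both $\Sq^2_{++}(X)$- and $\Sq^2_{-+}(X)$-squares can be vertically composed without restriction while mapping to $\zig_{+}^2(X)$.

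The paper's fix is to sidestep interchange entirely: one exhibits an explicit element $\alpha \in (R_1^2\LL_1 Z^2_+(X))_{1,2}$ (built by horizontally gluing three elements $\beta_1, \beta_2, \beta_3$ of the appropriate $\Sq^2_{\pm,+}(X)_{1,2}$) whose three faces $\alpha_{01}$, $\alpha_{12}$, $\alpha_{02}$ are, respectively, $\id_2(f^\to)\circ_1 \eta$, $\varepsilon\circ_1 \id_2(f^\to)$, and $\id_2(f^\to)$. Each face is globular, so $\alpha$ survives into $\zig_{+}^2(X)_{1,2}$, where it witnesses the relation $\alpha_{12}\circ_2\alpha_{01}\simeq\alpha_{02}$, i.e.\ the first snake equation; the second is symmetric. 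The horizontal composites of the $\beta_i$'s (which \emph{are} allowed, since $\circ_1$ is introduced by $\LL_1$ before globularity is imposed) are computed via the necklace formula of \Cref{fct:formula-segalification}. Your companion-identity computations are exactly what makes the components of $\alpha$ come out right, so the arithmetic in your write-up is reusable; it is the justification for applying interchange that needs to be replaced by the construction of such a witness.
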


\begin{proof}
	The first step is to ensure that $f^\to$, $f^\leftarrow$, $\eta$, and $\varepsilon$ are valid morphisms in $\zig_{+}^2(X)$. Recall that
	\begin{equation*}
		\zig_{+}^2(X) := (R^2_2 \LL_2 R_1^2 \LL_1) \left( Z^{2}_+(X) \right), \qquad Z^2_+(X)_{\undl{t}} := \Sq^2_{++}(X)_{\undl{t}} \sqcup_{\Sq^2_{0+}(X)_{\undl{t}}} \Sq^2_{-+}(X)_{\undl{t}}.
	\end{equation*}
	The space of $1$-morphisms of $\zig_{+}^2(X)$ contains the space $Z^2_+(X)_{1,0}$ and so, in particular, $f^\to \in \Sq^2_{++}(X)_{1,0}$ and $f^\leftarrow \in \Sq^2_{-+}(X)_{1,0}$ are valid $1$-morphisms. The space of $(1,1)$-morphisms of $\LL_1 Z^2_+(X)$ contains $Z^2_+(X)_{1,1}$, hence it contains $H_f, E_f \in \Sq^2_{++}(X)_{1,1}$ and $H_{f^{\op_1}}, E_{f^{\op_1}} \in \Sq^2_{-+}(X)_{1,1}$ and therefore it contains the (free) compositions $\eta$ and $\varepsilon$. Note also that $\eta$ and $\varepsilon$ are already globular: if $\partial_1^0 f \simeq x$ and $\partial_1^1 f \simeq y$ then
	\begin{equation*}
		\partial_1^0 \eta \simeq \partial_1^0 H_f \simeq \id^2(x) \simeq \partial_1^1 H_{f^{\op_1}} \simeq \partial_1^1 \eta, \qquad \partial_1^0 \varepsilon \simeq \partial_1^0 E_{f^{\op_1}} \simeq \id^2(y) \simeq \partial_1^1 E_{f} \simeq \partial_1^1 \varepsilon.
	\end{equation*}
	Therefore $\eta, \varepsilon \in (R_1^2 \LL_1 Z^2_+ (X))_{1,1}$ and so $\eta, \varepsilon \in \zig_{+}^2(X)_{1,1}$.
	
	Now we have to show that the snake equations hold. The argument is the same for both equations, so we will only write down one of them. More precisely, we will prove that there is some $\alpha \in (\LL_1 Z^2_+(X))_{1,2}$ whose components are
	\begin{equation*}
		\alpha_{01} \simeq \id_2(f^\to) \circ_1 \eta, \quad \alpha_{12} \simeq \varepsilon \circ_1 \id_2(f^\to), \quad \alpha_{02} \simeq \id_2(f^\to),
	\end{equation*}
	where $\alpha_{ij} \in (\LL_1 Z^2_+(X))_{1,1}$ denotes the restriction of $\alpha$ along the map $\{i,j\} \to [2]$ in the second coordinate; moreover, each $\alpha_{ij}$ is globular (as can be seen by computing the vertical boundaries) and so $\alpha$ belongs to $(R_1^2 \LL_1 Z^2_+(X))_{1,2}$. The existence of this $\alpha$ is enough to prove the snake equations because, in $\zig_{+}^2(X)$, $\alpha$ is turned into a witness for an equivalence $\alpha_{12} \circ_2 \alpha_{01} \simeq \alpha_{02}$.
	
	Consider the following points of $Z^2_+(X)_{1,2}$:
	\begin{align*}
		\beta_1 & := (H_f, \id_2(f^\to)) \in \Sq^2_{++}(X)_{1,1} \times_{\Sq^2_{++}(X)_{1,0}} \Sq^2_{++}(X)_{1,1} \simeq \Sq^2_{++}(X)_{1,2}, \\
		\beta_2 & := (H_{f^{\op_1}}, E_{f^{\op_1}}) \in \Sq^2_{-+}(X)_{1,1} \times_{\Sq^2_{-+}(X)_{1,0}} \Sq^2_{-+}(X)_{1,1} \simeq \Sq^2_{-+}(X)_{1,2}, \\
		\beta_3 & := (\id_{2}(f^\rightarrow), E_f) \in \Sq^2_{++}(X)_{1,1} \times_{\Sq^2_{++}(X)_{1,0}} \Sq^2_{++}(X)_{1,1} \simeq \Sq^2_{++}(X)_{1,2},
	\end{align*}
	where we have encoded a point $\gamma$ of $\Sq^{2}_{\pm, +}(X)_{1,2}$ as the pair $(\gamma_{01}, \gamma_{12})$ consisting of its two components in $\Sq^{2}_{\pm, +}(X)_{1,1}$ (we can do this since $\Sq^2(-)$ is a double Segal space). Note that we have
	\begin{equation*}
		\partial_1^1 \beta_1 \simeq (f^\downarrow, \id^2(y)) \simeq \partial_1^0 \beta_2, \qquad \partial_1^1 \beta_2 \simeq (\id^2(x), f^\downarrow) \simeq \partial_1^0 \beta_3
	\end{equation*}
	in $\Sq^{2}_{0+}(X)_{0,2}$ and so we can form the horizontal composition $\alpha := \beta_3 \circ_1 \beta_2 \circ_1 \beta_1$ in $\LL_1 Z^2_+(X)$. Since $\alpha \in \Sq^2_{++}(X)_{1,2} \times_{\Sq^2_{++}(X)_{0,2}} \Sq^2_{++}(X)_{1,2} \times_{\Sq^2_{++}(X)_{0,2}} \Sq^2_{++}(X)_{1,2}$, the formula for $\LL_1$ in \Cref{fct:formula-segalification} tells us that we can compute the components of $\alpha$ as the free composition of the components of the $\beta_i$. Therefore
	\begin{align*}
		\alpha_{01} & \simeq (\beta_3)_{01} \circ_1 (\beta_2)_{01} \circ_1 (\beta_1)_{01} \simeq \id_2(f^\to) \circ_1 H_{f^{\op_1}} \circ_1 H_f \simeq \id_2(f^\to) \circ_1 \eta, \\
		\alpha_{12} & \simeq (\beta_3)_{12} \circ_1 (\beta_2)_{12} \circ_1 (\beta_1)_{12} \simeq E_f \circ_1 E_{f^{\op_1}} \circ_1 \id_2(f^\to) \simeq \varepsilon \circ_1 \id_2(f^\to), \\
		\alpha_{02} & \simeq (\beta_3)_{02} \circ_1 (\beta_2)_{02} \circ_1 (\beta_1)_{02} \simeq E_f \circ_1 \id_1(f^\downarrow) \circ_1 H_f \simeq \id_2(f^\to),
	\end{align*}
	which is exactly what we needed.
\end{proof}

\begin{rmk}
	Here is a short diagrammatic explanation of the proof of \Cref{prp:adj-in-zig} and why it's not as straightforward as it could be. Write the diagram
	\begin{equation*}
		\begin{tikzcd}
			x \ar[r, equal] \ar[d, equal] & x \ar[d, "f"] \ar[r, equal] & x \ar[d, equal] \ar[r, "f"] & y \ar[d, equal] \\
			x \ar[r, "f"] \ar[d, equal] & y \ar[d, equal] & x \ar[l, "f"'] \ar[d, "f"] \ar[r, "f"] & y \ar[d, equal] \\
			x \ar[r, "f"] & y \ar[r, equal] & y \ar[r, equal] & y
		\end{tikzcd}
	\end{equation*}
	representing a formal vertical composite of two $2$-morphisms in $\zig_{+}^2(X)$, with the top row representing $\id_2(f^\to) \circ_1 \eta$ and the bottom row representing $\varepsilon \circ_1 \id_2(f^\to)$. If we didn't have to worry about globularity then we could simply use the interchange law for double Segal spaces, which holds in $\zig_{+}^2(X)$, to compute the vertical composition of each column first and then compose the resulting $(1,1)$-morphisms horizontally. In our setting, however, we can only perform vertical compositions of \emph{globular} $(1,1)$-morphisms (as the $\LL_2$ in $\zig_{+}^2(X) = (R_2^2 \LL_2 R_1^2 \LL_1 Z_+^2)(X)$ comes \emph{after} the $R_1^2$), and the columns of the diagram are not globular. Fortunately we can show that there is a pre-existing relation in $(R_1^2 \LL_1 Z_+^2)(X)$ which allows us to bypass the interchange law and still obtain the desired result.
\end{rmk}

\subsection{Generators for zigzags}

We have just shown that the canonical inclusion $X \to \zig_{+}^2(X)$ promotes every $1$-morphism of $X$ to a left adjoint. We will now prove that the (co)units of these adjunctions are enough to generate $\zig_{+}^2(X)$ under composition.

\begin{lmm} \label{lmm:vert-decomp}
	Every $2$-morphism of $\zig_{+}^2(X)$ can be written as a vertical composition of zigzags $\sigma_k \circ_1 \sigma_{k-1} \circ_1 \dotsb \circ_1 \sigma_1$, where 
	\begin{enumerate}[label=(\alph*)]
		\item each $\sigma_i$ is in $\Sq^2_{++}(X)_{1,1}$ or $\Sq^2_{-+}(X)_{1,1}$,
		\item $\partial_1^0 \sigma_1$ and $\partial_1^1 \sigma_k$ are identities, and
		\item the compositions $\circ_1$ are computed in $\LL_1 Z_+^2(X)$.
	\end{enumerate}
\end{lmm}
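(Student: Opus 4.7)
The plan is to unwind the definition $\zig_{+}^2(X) \simeq \LL_2 W$ with $W := R_1^2 \LL_1 Z^2_+(X)$, using that $R_2^2$ acts as the identity since globularity at height $2$ is automatic for $2$-uple Segal spaces, and then apply \Cref{fct:formula-segalification} twice --- once to $\LL_2$ and once to $\LL_1$ --- to produce vertical and horizontal decompositions in sequence.

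For the vertical decomposition, \Cref{fct:formula-segalification} applied to $\LL_2$ presents a $2$-morphism, i.e.\ a point of $(\LL_2 W)_{1,1}$, as a necklace $[n_1] \vee \dotsb \vee [n_p] \in \bN$ together with matching elements $w_i \in W_{1, n_i}$. Since $\LL_2 W = \zig_{+}^2(X)$ is a globular $2$-uple Segal space, the Segal condition in the second coordinate applied to the image of each $w_i$ under the unit map $W \to \LL_2 W$ shows that this image is the vertical composition in $\LL_2 W$ of the face restrictions $d_j^2 w_i \in W_{1, 1}$ for $j = 0, \dotsc, n_i - 1$. Hence every $2$-morphism of $\zig_{+}^2(X)$ is a vertical composition of elements of $W_{1, 1} = (R_1^2 \LL_1 Z^2_+(X))_{1, 1}$.

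For the horizontal decomposition, I would analyze $W_{1,1}$ directly. By \Cref{prp:ra-as-pullback}, $W_{1,1}$ is the subspace of $(\LL_1 Z^2_+(X))_{1, 1}$ consisting of points whose outer vertical legs are identities. Applying \Cref{fct:formula-segalification} to $\LL_1$,
\[ (\LL_1 Z^2_+(X))_{1, 1} \simeq \colim_{[m_1] \vee \dotsb \vee [m_k] \in \bN^\op} Z^2_+(X)_{m_1, 1} \times_{Z^2_+(X)_{0, 1}} \dotsb \times_{Z^2_+(X)_{0, 1}} Z^2_+(X)_{m_k, 1}. \]
Each $Z^2_+(X)_{m_j, 1}$ is the pushout $\Sq^2_{++}(X)_{m_j, 1} \sqcup_{\Sq^2_{0+}(X)_{m_j, 1}} \Sq^2_{-+}(X)_{m_j, 1}$, so its points come from $\Sq^2_{++}(X)_{m_j, 1}$ or from $\Sq^2_{-+}(X)_{m_j, 1}$. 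Since both $\Sq^2_{++}(X)$ and $\Sq^2_{-+}(X)$ are $2$-uple Segal, the Segal condition in the first coordinate splits each such rectangle as a horizontal stack of $m_j$ single squares in $\Sq^2_{\pm +}(X)_{1, 1}$. Concatenating yields a horizontal composite $\sigma_k \circ_1 \dotsb \circ_1 \sigma_1$ in $\LL_1 Z^2_+(X)$ with each $\sigma_i \in \Sq^2_{++}(X)_{1, 1}$ or $\Sq^2_{-+}(X)_{1, 1}$, giving (a) and (c); condition (b) is then enforced by the $R_1^2$-globularity, which collapses the outermost vertical legs.

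The main obstacle is to justify rigorously the vertical decomposition step: although $\LL_2 W$ is Segal in the second coordinate, rewriting the image of each $w_i \in W_{1, n_i}$ as a composition of its face restrictions in $W_{1, 1}$ requires carefully tracking naturality against the unit $W \to \LL_2 W$ and checking that the $\sum_i n_i$ resulting pieces glue at boundaries of $W_{1, 0}$. I expect this to follow formally from the necklace presentation of $\LL_2$ together with the universal property of Segalification as a reflection, but it deserves an explicit verification.
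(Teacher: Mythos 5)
Your proposal is correct and takes essentially the same route as the paper, whose proof is just the observation that the claim unwinds from the placement of $\LL_1$, $R_1^2$, $\LL_2$ in \Cref{dfn:zigzag} (with $R_2^2 \simeq \id$) together with \Cref{fct:formula-segalification}; in particular the "main obstacle" you flag is already covered by the final assertion of that fact applied to the simplicial space $W_{1,\bullet}$, so no extra verification is needed. One minor correction: the identification of $(R_1^2 \LL_1 Z_+^2(X))_{1,1}$ with the $(1,1)$-cells whose outer vertical legs are identities should be justified from the construction of $R_1^2$ in \Cref{prp:rnk-adjoints} (the $k=1$ case, via the pullback square there), not from \Cref{prp:ra-as-pullback}, which is stated only for $k \geq 2$.
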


\begin{proof}
	This is a restatement of the formula for $\LL_1$ and $\LL_2$ in \Cref{fct:formula-segalification} and the placement of these functors in the definition $\zig_{+}^2(X) = (R_2^2 \LL_2 R_1^2 \LL_1 Z_+^2)(X)$. Namely, first we use $\LL_1$ to form the zigzags, then $R_1^2$ to ensure that each zigzag is globular, and then we use $\LL_2$ to compose these zigzags freely.
\end{proof}

\begin{prp} \label{prp:horiz-decomp}
	Each zigzag $\sigma_k \circ_1 \sigma_{k-1} \circ_1 \dotsb \circ_1 \sigma_1$ as above can be written, in $\zig_{+}^2(X)$, as a composition of $2$-morphisms of the form $\eta_f = H_{f^{\op_1}} \circ_1 H_{f}$, $\varepsilon_f = E_f \circ_1 E_{f^{\op_1}}$, $\id_2(f^\to)$, or $\id_2(f^\leftarrow)$.
\end{prp}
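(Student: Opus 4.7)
The plan is to reduce each individual square $\sigma_i$ to a standard building block, and then exploit the interchange law inside each $\Sq^2_{\pm+}(X)$ (which holds there, being a double Segal space) to reorganize the horizontal composite. First, by inserting $\id_2$-type squares if necessary, I would normalize the zigzag so that the $\sigma_i$ alternate strictly between $\Sq^2_{++}(X)$ and $\Sq^2_{-+}(X)$ and have even length, writing it as $\sigma_{2n} \circ_1 \sigma_{2n-1} \circ_1 \dotsb \circ_1 \sigma_2 \circ_1 \sigma_1$. Then, for each $\sigma_{2i-1} \in \Sq^2_{++}(X)_{1,1}$, the decomposition of \Cref{rmk:E-H-decomp} gives a formula of the shape $(\id_2(l^\to) \circ_1 E_h) \circ_2 (H_j \circ_1 \id_2(f^\to))$ in terms of the four horizontal sides of the underlying square; analogously each $\sigma_{2i} \in \Sq^2_{-+}(X)_{1,1}$ decomposes using the $(-)^{\op_1}$ versions of $E$ and $H$.

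Next, I would analyze the horizontal composite pair-by-pair. Computing $\sigma_{2i} \circ_1 \sigma_{2i-1}$ inside $\LL_1 Z^2_+(X)$ and applying interchange within each of $\Sq^2_{++}(X)$ and $\Sq^2_{-+}(X)$ separately, the $H$-factors living on the shared vertical edge $j$ line up as $H_{j^{\op_1}} \circ_1 H_j$, which is precisely $\eta_j$. Propagating this reorganization across the whole zigzag, the $E$-factors sitting between $\sigma_{2i}$ and $\sigma_{2i+1}$ along a shared vertical edge $k$ likewise assemble into $E_k \circ_1 E_{k^{\op_1}} = \varepsilon_k$. What remains after this rewriting are: (i) copies of $\id_2(f^\to)$ and $\id_2(f^\leftarrow)$ threaded between the units and counits, and (ii) the two outermost factors, namely an $E_s$ on the extreme left (coming from $\sigma_1$) and an $E_{t^{\op_1}}$ on the extreme right (coming from $\sigma_{2n}$).

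The boundary condition finishes the argument: the vertical source of $\sigma_1$ is $\partial_1^0 \sigma_1$, which is an identity by hypothesis, and likewise for $\partial_1^1 \sigma_{2n}$. Hence $s$ and $t$ above are identity $1$-morphisms, so $E_s$ and $E_{t^{\op_1}}$ are themselves identity $(1,1)$-morphisms, i.e.\ of the form $\id_2(\id(x)^\to)$. After absorbing these, the entire composite is written purely in terms of $\eta_f$, $\varepsilon_f$, $\id_2(f^\to)$, and $\id_2(f^\leftarrow)$, as required.

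The main obstacle I anticipate is legitimizing the uses of the interchange law: in $\zig_{+}^2(X)$ itself we cannot interchange freely, since the $\LL_2$ in $(R_2^2 \LL_2 R_1^2 \LL_1 Z_+^2)(X)$ only composes \emph{globular} pieces and the intermediate rearrangements are not individually globular. The way around this — as in the proof of \Cref{prp:adj-in-zig} — is to perform the reorganization before applying $R_1^2$ and $\LL_2$, working in $\Sq^2_{++}(X)$ and $\Sq^2_{-+}(X)$ where interchange is valid, and then to identify the resulting relations as already existing in $(R_1^2 \LL_1 Z_+^2)(X)$, so that they descend to equalities of $2$-morphisms in $\zig_+^2(X)$.
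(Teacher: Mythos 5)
Your proposal is correct and follows essentially the same route as the paper's proof: using the $E$--$H$ decomposition of \Cref{rmk:E-H-decomp} for each $\sigma_i$, assembling the shared-edge $H$-factors into $\eta$'s and $E$-factors into $\varepsilon$'s, invoking the boundary condition to kill the outermost $E$-factors, and — crucially — realizing that the rewriting must be encoded as a pre-existing element of $(R_1^2 \LL_1 Z_+^2(X))_{1,2}$ rather than via interchange in $\zig_+^2(X)$ itself. The paper makes this last step explicit by constructing the witnessing $\alpha \in (R_1^2 \LL_1 Z_+^2(X))_{1,2}$ as a horizontal concatenation of $(1,2)$-cells in $\Sq^2_{\pm+}(X)$ whose components $\alpha_{01}$, $\alpha_{12}$, $\alpha_{02}$ realize the two factors and the zigzag, exactly as you anticipated in your final paragraph.
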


\begin{proof}
	We will use the same strategy as in the proof of \Cref{prp:adj-in-zig}: we will find some $\alpha \in (R_1^2 \LL_1 Z_+^2)(X)_{1,2}$ such that $\alpha_{01}$ and $\alpha_{12}$ are free horizontal compositions of things of the form $\eta_f$, $\varepsilon_f$, $\id_2(f^\to)$, or $\id_2(f^\leftarrow)$ and such that $\alpha_{02}$ is the zigzag. Without loss of generality we may assume that the $(1,1)$-morphisms $\sigma_i$ alternate between being in $\Sq^2_{++}(X)$ and $\Sq^2_{-+}(X)$, i.e. we compose everything that is already composable in the starting double categories. To simplify the proof we will assume that $k = 2j$ is even and that $\sigma_1 \in \Sq^2_{++}(X)_{1,1}$, but exactly the same argument will produce a proof for the other three cases.
	
	First write every $\sigma_{2i-1}$ and $\sigma_{2i}$, for $i = 1, \dotsc, j$, as commutative squares:
	\begin{equation*}
		\sigma_{2i-1} \simeq \,
		\begin{tikzcd}
			w_i \ar[r, "f_i"] \ar[d, "h_i"'] & x_i \ar[d, "g_i"] \\
			y_i \ar[r, "k_i"'] & z_i
		\end{tikzcd}
		\qquad
		\sigma_{2i} \simeq \,
		\begin{tikzcd}
			\hat{x}_i \ar[d, "\hat{g}_i"'] & \hat{w}_i \ar[l, "\hat{f}_i"'] \ar[d, "\hat{h}_i"] \\
			\hat{z}_i & \hat{y}_i \ar[l, "\hat{k}_i"]
		\end{tikzcd}
	\end{equation*}
	Since the pairs $(\sigma_{2i-1}, \sigma_{2i})$ and $(\sigma_{2i}, \sigma_{2i+1})$ are composable we must have $\hat{g}_i \simeq g_i$ and $\hat{h}_{i} \simeq h_{i+1}$ for all valid values of $i$. Now use \Cref{rmk:E-H-decomp} to write
	\begin{equation*}
		\sigma_{2i-1} \simeq (\id_2(k_i^\to) \circ_1 E_{h_i}) \circ_2 (H_{g_i} \circ_1 \id_2(f_i^\to)), \qquad \sigma_{2i} \simeq (E_{\hat{h}_i^{\op_1}} \circ_1 \id_2(\hat{k}_i^\leftarrow)) \circ_2 (\id_2(\hat{f}_i^\leftarrow) \circ_1 H_{\hat{g}_i^{\op_1}}).
	\end{equation*}
	In particular we have $\alpha_{2i-1} \in \Sq^{2}_{++}(X)_{12}$ and $\alpha_{2i} \in \Sq^{2}_{-+}(X)_{12}$ such that
	\begin{align*}
		(\alpha_{2i-1})_{01} & \simeq H_{g_i} \circ_1 \id_2(f_i^\to), & (\alpha_{2i})_{01} & \simeq \id_2(\hat{f}_i^\leftarrow) \circ_1 H_{\hat{g}_i^{\op_1}}, \\
		(\alpha_{2i-1})_{12} & \simeq \id_2(k_i^\to) \circ_1 E_{h_i}, & (\alpha_{2i})_{12} & \simeq E_{\hat{h}_i^{\op_1}} \circ_1 \id_2(\hat{k}_i^\leftarrow), \\
		(\alpha_{2i-1})_{02} & \simeq \sigma_{2i-1}, & (\alpha_{2i})_{02} & \simeq \sigma_{2i},
	\end{align*}
	and so we have $\alpha := \alpha_{2j} \circ_1 \alpha_{2j-1} \circ_1 \dotsb \circ_1 \alpha_{2} \circ_1 \alpha_1 \in (\LL_1 Z_+^2(X))_{1,2}$. By assumption the zigzag $\sigma_k \circ_1 \dotsb \circ_1 \sigma_1$ is globular and therefore $h_1$ and $\hat{h}_{j}$ are identities, which also implies that $\alpha$ is globular, i.e. $\alpha \in (R_1^2 \LL_1 Z_+^2(X))_{1,2}$. Finally, we must verify that ($\dagger$) $\alpha_{01}$ and $\alpha_{02}$ are composites of (co)units and identities and that ($\ddagger$) $\alpha_{02}$ is the zigzag we started with. Notice that for any $a,b \in \{0,1,2\}$ we have
	\begin{equation*}
		\alpha_{ab} \simeq (\alpha_{2j})_{ab} \circ_1 (\alpha_{2j-1})_{ab} \circ_1 \dotsb \circ_1 (\alpha_{2})_{ab} \circ_1 (\alpha_1)_{ab}
	\end{equation*}
	by the formula in \Cref{fct:formula-segalification}; this immediately implies ($\ddagger$). For ($\dagger$), note that
	\begin{equation*}
		(\alpha_{2j})_{01} \circ_1 (\alpha_{2j-1})_{01} \circ_1 \dotsb \circ_1 (\alpha_1)_{01} \simeq \id_2(\hat{f}_{j}^\leftarrow) \circ_1 \eta_{g_j} \circ_1 \id_2(f_j^\to) \dotsb \id_2(\hat{f}_{1}^\leftarrow) \circ_1 \eta_{g_1} \circ_1 \id_2(f_1^\to)
	\end{equation*}
	since $\hat{g}_i \simeq g_i$ and
	\begin{equation*}
		(\alpha_{2j})_{12} \circ_1 (\alpha_{2j-1})_{12} \circ_1 \dotsb \circ_1 (\alpha_1)_{12} \simeq E_{\hat{h}_j^{\op_1}} \circ_1 \id_2(\hat{k}_j^\leftarrow) \circ_1 \id_2(k_j^\to) \circ_1 \varepsilon_{h_j} \circ_1 \dotsb \circ_1 \varepsilon_{h_1} \circ_1 \id_2(\hat{k}_j^\leftarrow) \circ_1 \id_2(k_j^\to) \circ_1 E_{h_1}.
	\end{equation*}
	Together with the fact that $E_{h_1}$ and $E_{\hat{h}_j^{\op_1}}$ are identities (since $h_1$ and $\hat{h}_j$ are identities) these two equations prove ($\ddagger$), and so we're done.
\end{proof}

\Cref{lmm:vert-decomp} and \Cref{prp:horiz-decomp} allow us to conclude the following generation result:
\begin{crl} \label{crl:final-generation}
	 Let $A \hookrightarrow \zig^2(X)_1$ denote the space containing all the $2$-morphisms of the form
	\begin{enumerate}
		\item $\id(\id(x))$ for $x \in X_0$,
		\item $\id(f^\to)$ and $\id(f^\leftarrow)$ for all $f \in X_1$,
		\item $\eta_f$ and $\varepsilon_f$, as defined in \Cref{prp:horiz-decomp}, for all $f \in X_1$.
    \end{enumerate}
    Then $A$ generates $\zig^2(X)$ under composition.
\end{crl}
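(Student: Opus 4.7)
The plan is to directly combine \Cref{lmm:vert-decomp} and \Cref{prp:horiz-decomp} after checking condition (1) of the definition of generation under composition. For condition (1), the doubly iterated identity map $\id(\id(-)): \zig_{+}^2(X)_{0,0} \to \zig_{+}^2(X)_{1,1}$ factors through $A$ by the first bullet of the description. For the iterated identity on $1$-morphisms, a general $1$-morphism $g$ of $\zig_{+}^2(X)$ is by construction a formal horizontal composite of morphisms of the form $f^\to$ and $f^\leftarrow$, via the necklace formula of \Cref{fct:formula-segalification} applied to $\LL_1$. Compatibility of $\id(-)$ with the Segal structure then expresses $\id(g)$ as the horizontal composite of the identities $\id(f^\to)$ and $\id(f^\leftarrow)$, all of which lie in $A$, so the identity map factors through the closure of $A$ under horizontal composition.

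For condition (2), I fix an arbitrary $2$-morphism $\Sigma$ of $\zig_{+}^2(X)$ and decompose it in two stages. First, \Cref{lmm:vert-decomp} writes $\Sigma \simeq \sigma_r \circ_1 \sigma_{r-1} \circ_1 \dotsb \circ_1 \sigma_1$, where each $\sigma_i$ lies in $\Sq^2_{++}(X)_{1,1}$ or $\Sq^2_{-+}(X)_{1,1}$, the outer horizontal boundaries $\partial_1^0 \sigma_1$ and $\partial_1^1 \sigma_r$ are identities, the composition $\circ_1$ is computed inside $\LL_1 Z_+^2(X)$, and the outer vertical composition $\circ_2$ is then formed freely by the final $\LL_2$. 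Second, \Cref{prp:horiz-decomp} rewrites the horizontal zigzag $\sigma_r \circ_1 \dotsb \circ_1 \sigma_1$ (which is globular by the boundary assumption) as an iterated horizontal composition of generators of the form $\eta_f$, $\varepsilon_f$, $\id_2(f^\to)$, and $\id_2(f^\leftarrow)$, all of which belong to $A$. Substituting these horizontal decompositions back into the vertical one exhibits $\Sigma$ as an iterated composition of elements of $A$, as required.

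The only subtlety to manage is that the two decomposition results live at different stages of the tower
\begin{equation*}
    Z_+^2(X) \to \LL_1 Z_+^2(X) \to R_1^2 \LL_1 Z_+^2(X) \to \LL_2 R_1^2 \LL_1 Z_+^2(X) \to \zig_{+}^2(X),
\end{equation*}
so one has to verify that a formal composite assembled at a lower stage descends to a genuine composite in $\zig_{+}^2(X)$. This is immediate from functoriality: each arrow in the tower is a map of multisimplicial spaces respecting the face, degeneracy, and Segal-inherited composition structure, and the remaining application of $R_2^2$ is an equivalence on $1$- and $2$-morphisms. Hence an iterated composition expression that witnesses $\Sigma$ upstairs maps to the same iterated composition expression downstairs, placing $\Sigma$ in the closure of $A$ under $\circ_1$ and $\circ_2$.
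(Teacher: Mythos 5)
Your argument is correct and takes essentially the same route as the paper, which states this corollary as an immediate consequence of \Cref{lmm:vert-decomp} and \Cref{prp:horiz-decomp} — exactly the two-stage decomposition you carry out. Your additional checks (that identities of composite zigzags land in the closure of $A$ under horizontal composition, and that formal composites assembled at earlier stages of the tower descend to genuine composites in $\zig_{+}^2(X)$) simply make explicit points the paper leaves implicit.
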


\begin{crl} \label{crl:factorization-through-inclusion}
	If $\sD$ is a sinister $2$-category and $F : X \to \sD$ is a map from a Segal space, the induced map $\tilde{F} : \zig_{+}^2(X) \to \zig_{+}^2(\sD)$ factors through the canonical inclusion $\sD \to \zig_{+}^2(\sD)$.
\end{crl}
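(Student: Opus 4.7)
The plan is to construct a functor $G : \zig_{+}^2(X) \to \sD$ and exhibit a natural equivalence $i \circ G \simeq \tilde{F}$, where $i : \sD \to \zig_{+}^2(\sD)$ is the canonical inclusion. The starting point is the sinister hypothesis on $\sD$: for each $1$-morphism $h$ of $\sD$ one may choose adjunction data $(h^R, \eta_h^\sD, \varepsilon_h^\sD)$ witnessing $h$ as a left adjoint, and uniqueness of adjoints makes this choice canonical in $h$ (the space of such data being contractible). Using the generation results of the previous subsection we may then specify $G$ by its values on generators, rather than working directly with the iterated Segalification/globularization present in the definition of $\zig_{+}^2$.

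To define $G$ on the generators of \Cref{crl:final-generation}, I set $G(x) := F(x)$ on objects, $G(f^\to) := F(f)$ and $G(f^\leftarrow) := F(f)^R$ on generating $1$-morphisms (extended to longer zigzags by composition in $\sD$), and $G(\eta_f) := \eta_{F(f)}^\sD$, $G(\varepsilon_f) := \varepsilon_{F(f)}^\sD$ on the $2$-morphism generators (with identities going to identities). For $G$ to extend to a genuine functor of globular $2$-uple Segal spaces, we must check compatibility with the relations among these generators; by the monomorphism of mapping spaces established in the previous subsection, this reduces to verifying the snake equations, and those for $(\eta_f, \varepsilon_f)$ proven in \Cref{prp:adj-in-zig} are sent precisely to the snake equations for $(\eta_{F(f)}^\sD, \varepsilon_{F(f)}^\sD)$, which hold by construction.

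Finally, I would verify $i \circ G \simeq \tilde{F}$ on generators. On objects and on $f^\to$ the two maps agree strictly. On $f^\leftarrow$, we have $i \circ G(f^\leftarrow) \simeq (F(f)^R)^\to$ while $\tilde{F}(f^\leftarrow) = F(f)^\leftarrow$; both are right adjoints of $F(f)^\to$ in $\zig_{+}^2(\sD)$ — the former because $i$ carries the adjunction $F(f) \dashv F(f)^R$ in $\sD$ to an adjunction in $\zig_{+}^2(\sD)$, the latter by \Cref{prp:adj-in-zig} applied to $\sD$ — so uniqueness of right adjoints supplies a canonical equivalence between them. An analogous uniqueness-of-adjunction-data argument relates $i \circ G(\eta_f), i \circ G(\varepsilon_f)$ to $\tilde{F}(\eta_f), \tilde{F}(\varepsilon_f)$. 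The main obstacle is precisely the packaging of these pointwise equivalences into a single coherent natural equivalence $i \circ G \simeq \tilde{F}$ of maps of $(\infty,2)$-categories; this is where the monomorphism-of-mapping-spaces result from the previous subsection is decisive, reducing the global check to the check on generators performed above.
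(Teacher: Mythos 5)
Your proposal constructs a new functor $G : \zig_{+}^2(X) \to \sD$ by specifying it on the generators of \Cref{crl:final-generation} and then argues $i \circ G \simeq \tilde{F}$. The paper's argument is shorter and goes the other way around: it starts from the already-existing map $\tilde{F} : \zig_{+}^2(X) \to \zig_{+}^2(\sD)$ and shows that $\tilde{F}$ carries each generator into the essential image of the inclusion $\sD \hookrightarrow \zig_{+}^2(\sD)$ (for $f^\leftarrow$, via an explicit invertible zigzag comparing $(F(f))^\leftarrow$ with a chosen right adjoint $g$ of $F(f)$ in $\sD$; for $E_f$, $H_f$, via \cite[Theorem 4.4.18]{RV2016}). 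Since the image is closed under composition, the whole map factors.

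The genuine gap in your approach is the justification for the existence of $G$. You write that, ``by the monomorphism of mapping spaces established in the previous subsection, [extending $G$ from its values on generators] reduces to verifying the snake equations.'' But that proposition only asserts that the restriction map $\Map(\zig_{+}^2(X), \sD) \to \Map(M(A(1)^{\el}), \sD)$ is a \emph{monomorphism}; it gives uniqueness of an extension, not existence. Nothing in \Cref{sec:properties} prior to \Cref{thm:univ-prop} lets you conclude that an assignment on generators (satisfying the snake equations) lifts to a functor out of $\zig_{+}^2(X)$ --- that would amount to the essential-surjectivity half of the universal property. Worse, that surjectivity is proven in \Cref{thm:univ-prop} \emph{using} \Cref{crl:factorization-through-inclusion} precisely to build the map $p : \zig_{+}^2(\tau_1 \sD) \to \sD$, so constructing $G$ directly would be circular as stated. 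The paper avoids this by never constructing a new functor: it only observes that $\tilde{F}$, which already exists, lands in the essential image of $i$ on generators and hence everywhere. A secondary concern: even granting $G$, the reduction of $i \circ G \simeq \tilde{F}$ to a pointwise check on generators is not quite given by the monomorphism result, since that result compares restrictions to $M(A(1)^\el)$ as \emph{functors}, and one must still assemble your pointwise equivalences into a coherent natural equivalence over $M(A(1)^\el)$ before the monomorphism applies.
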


\begin{proof}
	It's enough to show that the generating morphisms of $\zig_{+}^2(X)$ are sent to morphisms in $\sD$. This is obvious for objects. Morphisms coming from $X$ are sent to morphisms in $\sD$ by construction. If $f^\leftarrow$ is a morphism coming from $X^{\op_1}$ then $\tilde{F}(f^\leftarrow) \simeq (F(f))^{\leftarrow}$, and the latter is equivalent to any right adjoint $g$ of $F(f)$ in $\sD$ via the invertible $2$-morphism represented by the zigzag
	\begin{equation*}
		\begin{tikzcd}
			y \ar[d, equal] & x \ar[l, "F(f)"'] \ar[r, equal] \ar[d, "F(f)"'] & x \ar[d, equal] \ar[dl, "\eta", Rightarrow] \\
			y & y \ar[l, equal] \ar[r, "g"'] & x
		\end{tikzcd}
	\end{equation*}
	in $\zig_{+}^2(\sD)$. The same holds for the (co)units, since $\tilde{F}(E_f)$ and $\tilde{F}(H_f)$ are equivalent to the (co)units $\varepsilon$ and $\eta$ of an adjunction $F(f) \dashv g$ using \cite[Theorem 4.4.18]{RV2016}.
\end{proof}

\subsection{Universal property}

In this subsection we will prove our main result:
\begin{thm}\label{thm:univ-prop}
	Let $\sD$ be a sinister $(\infty,2)$-category. Then, for any Segal space $X$, the map
	\begin{equation*}
		i^\ast : \Map(\zig_{+}^2(X), \sD) \to \Map(X, \sD) \simeq \Map(X, \tau_1 \sD)
	\end{equation*}
	obtained by precomposing with the canonical inclusion $i : X \to \zig_{+}^2(X)$ is an equivalence of spaces.
\end{thm}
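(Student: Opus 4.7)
The plan is to prove $i^\ast$ is both a monomorphism of spaces and surjective on $\pi_0$, which together give the claimed equivalence.

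For the monomorphism, I would combine \Cref{crl:final-generation} with the preceding generation proposition, which says that if $A \hookrightarrow \zig_{+}^2(X)_{\undl{1^2}}$ generates under composition then the canonical map $\Map(\zig_{+}^2(X), \sD) \to \Map(M(A^{\el}), \sD)$ is a monomorphism. A $\sD$-valued map on $M(A^{\el})$ unpacks into a functor $F \colon X \to \sD$ (read off from the generators $\id(\id(x))$ and $\id(f^\to)$), a chosen right adjoint $g_f$ of $F(f)$ for each $f \in X_1$ (read off from $\id(f^\leftarrow)$), and (co)unit data $(\eta, \varepsilon)$ (read off from $\eta_f$ and $\varepsilon_f$) satisfying the snake equations. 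Projecting onto the $F$-component gives a map $\Map(M(A^{\el}), \sD) \to \Map(X, \sD)$ whose fiber at $F$ is a product over $f \in X_1$ of spaces of adjunction data for $F(f)$ in $\sD$. Since $\sD$ is sinister each such space is contractible (see e.g.\ \cite[Theorem 4.4.18]{RV2016}), so the fiber is contractible and $i^\ast$ is a monomorphism.

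For surjectivity on $\pi_0$, I would construct a section. Given $F \colon X \to \sD$, functoriality produces $\tilde F := \zig_{+}^2(F) \colon \zig_{+}^2(X) \to \zig_{+}^2(\sD)$, and by \Cref{crl:factorization-through-inclusion} this factors through the canonical inclusion $j \colon \sD \to \zig_{+}^2(\sD)$ as $\tilde F \simeq j \circ G$ for some $G \colon \zig_{+}^2(X) \to \sD$. Naturality of the canonical inclusion $i$ gives $\tilde F \circ i_X \simeq j \circ F$, hence $j \circ G \circ i_X \simeq j \circ F$; since $j$ is faithful we conclude $G \circ i_X \simeq F$, so $[F] \in \pi_0 \Map(X, \sD)$ lies in the image of $i^\ast$.

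The main obstacle is the unpacking step in the monomorphism argument: one must verify that the boundary data built into $M(A^{\el})$ really cuts out exactly the tuples $(F, g_f, \eta_f, \varepsilon_f)$ satisfying the snake equations, with no extraneous conditions. The key observation is that the witness $\alpha$ for a snake equation constructed inside $\zig_{+}^2(X)$ in the proof of \Cref{prp:adj-in-zig} already lies in $M(A^{\el})$, since all of its components are generators; hence any $\sD$-valued map on $M(A^{\el})$ transports it to a witness of the corresponding snake equation in $\sD$. Once this translation is settled, the contractibility of adjunction data provided by sinisterness closes the argument.
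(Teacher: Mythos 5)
Your overall strategy is the paper's: the monomorphism half via the generation results (\Cref{crl:final-generation} plus the proposition that generation gives a monomorphism into $\Map(M(A^{\el}),\sD)$) and contractibility of adjunction data in a sinister $\sD$ via \cite[Theorem 4.4.18]{RV2016}, and the surjectivity half via functoriality of $\zig_{+}^2$ together with \Cref{crl:factorization-through-inclusion}. But the last step of your surjectivity argument has a genuine gap: from $j \circ G \circ i_X \simeq j \circ F$ you conclude $G \circ i_X \simeq F$ "since $j$ is faithful", where $j$ is the canonical inclusion $\sD \to \zig_{+}^2(\sD)$. What you actually need is that $j_\ast : \Map(X,\sD) \to \Map(X,\zig_{+}^2(\sD))$ is injective on equivalence classes, and neither this nor faithfulness of $j$ is established anywhere in the paper; since $\zig_{+}^2(\sD)$ is produced by a pointwise colimit followed by Segalification and globularization, it is not obvious that the canonical inclusion does not identify $2$-morphisms of $\sD$, and proving such a statement is not easier than the theorem itself. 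The paper sidesteps this entirely: it applies \Cref{crl:factorization-through-inclusion} to the inclusion $\tau_1\sD \to \sD$ to obtain $p : \zig_{+}^2(\tau_1\sD) \to \sD$ whose restriction along the canonical inclusion is the inclusion $\tau_1\sD \to \sD$ \emph{by construction} (the factorization in the corollary's proof is specified on generators, sending the generators coming from the source via the given functor), and then takes $E \mapsto p \circ \zig_{+}^2(E)$ as the section, computing $i^\ast$ of it by naturality of $i$. Your argument is repaired the same way: do not try to recover $G \circ i_X \simeq F$ by a cancellation against $j$, but use that the particular factorization $G$ produced by \Cref{crl:factorization-through-inclusion} restricts to $F$ on $X$ by construction.

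A smaller point in the monomorphism half: your "key observation" that the snake-equation witness $\alpha$ from \Cref{prp:adj-in-zig} lies in $M(A^{\el})$ is false, since its components are $\id_2(f^\to)\circ_1\eta_f$ and $\varepsilon_f\circ_1\id_2(f^\to)$, which are composites of generators rather than generators; moreover no aligned grid with all blocks in $A$ can encode the snake relation, because the horizontal subdivisions of the two rows do not match as tuples of zigzags. What suffices (and is how the argument should be closed, matching the paper's intent) is weaker: the fiber of $i^\ast$ over $E$ maps monomorphically, by the generation results, into $\prod_{f\in X_1}\Adj_\sD(E(f))$, because any functor $\zig_{+}^2(X)\to\sD$ carries the adjunction $f^\to \dashv f^\leftarrow$ of \Cref{prp:adj-in-zig} to an adjunction in $\sD$; contractibility of each $\Adj_\sD(E(f))$ for sinister $\sD$ then finishes the monomorphism claim.
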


\begin{proof}
	To show that $i^\ast$ is a monomorphism, let $F : \zig_{+}^2(X) \to \sD$ be a functor and consider $E := i^\ast(F)$. By \Cref{crl:final-generation} the map
    \begin{equation*}
        \Map(\zig^2_+(X), \sD) \to \Map(M(A(1)^\el), \sD)
    \end{equation*}
    is a monomorphism; recall that $A$ contains all of $X_0$, $X_1$, $X_1^{\op_1}$, and the (co)units of the adjunctions $f^\to \dashv f^\leftarrow$. Therefore the inclusion $X \to \zig^2_+(X)$ factors through $M(A(1)^\el)$, and we can ask whether the map
    \begin{equation*}
        \Map(M(A(1)^\el), \sD) \to \Map(X, \sD) \simeq \Map(X, \tau_1 \sD)
    \end{equation*}
    is a monomorphism, which will imply that $i^\ast$ is a monomorphism. The fiber of this map over a functor $E : X \to \tau_1 \sD$ is seen to be equivalent to $\prod_{f \in X_1} \Adj_\sD(E(f))$, where, for any $g \in \sD_{1,0}$
	\begin{equation*}
		\Adj_\sD(g) \simeq \Map(\Adj, \sD) \times_{\sD_{1,0}} \{g\}
	\end{equation*}
	is the space of adjunction data that have $g$ as a left adjoint. But since $\sD$ is sinister each of these spaces is non-empty and, therefore, contractible by \cite[Theorem 4.4.18]{RV2016}. Hence the fiber over $E$ is contractible, as desired.
	
	For essential surjectivity, we will produce an explicit section of $i^\ast$ on path components. First note that there is a map $p : \zig_{+}^2(\tau_1 \sD) \to \sD$ such that its restriction to $\tau_1 \sD$ is the inclusion $j : \tau_1 \sD \to \sD$: indeed, $j$ induces a map $\zig_{+}^2(\tau_1 \sD) \to \zig_{+}^2(\sD)$ which factors as $\zig_{+}^2(\tau_1 \sD) \to \sD \to \zig_{+}^2(\sD)$ thanks to \Cref{crl:factorization-through-inclusion}, and the restriction to $\tau_1 \sD$ returns $j$ by construction. Now we have a map
	\begin{equation*}
		p_\ast \circ \zig_{+}^2 : \pi_0 \Map(X, \tau_1 \sD) \to \pi_0 \Map(\zig_{+}^2(X), \zig_{+}^2(\tau_1 \sD)) \to \pi_0 \Map(\zig_{+}^2(X), \sD)
	\end{equation*}
	such that $i^\ast \circ p_\ast \circ \zig_{+}^2 \simeq p_\ast \circ i^\ast \circ \zig_{+}^2 \simeq \id$, i.e. $p_\ast \circ \zig_{+}^2$ is a section of $i^\ast$. This concludes the proof.
\end{proof}

The following are immediate consequences of \Cref{thm:univ-prop}:

\begin{crl} \label{crl:univ-prop}
	Let $\sD$ be any $(\infty,2)$-category and let $\tau_1 \sD^{\mathrm{ladj}} \sub \tau_1 \sD$ denote the full sub-$\infty$-category containing the $1$-morphisms which are left adjoints in $\sD$. Then the inclusion $X \to \zig_{+}^2(X)$ induces an equivalence
	\begin{equation*}
		\Map(\zig_{+}^2(X), \sD) \simeq \Map(X, \tau_1 \sD^{\mathrm{ladj}}).
	\end{equation*}
\end{crl}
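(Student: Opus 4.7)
The plan is to bootstrap from \Cref{thm:univ-prop}, generalizing its proof by weakening the ``$\sD$ sinister'' hypothesis to ``the functor $X \to \sD$ sends $1$-morphisms to left adjoints''. The target space $\Map(X, \tau_1 \sD)$ then has to be correspondingly replaced by $\Map(X, \tau_1 \sD^{\mathrm{ladj}})$.

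First I would observe that the map
\begin{equation*}
	i^\ast : \Map(\zig_{+}^2(X), \sD) \to \Map(X, \tau_1 \sD)
\end{equation*}
automatically factors through $\Map(X, \tau_1 \sD^{\mathrm{ladj}})$: any functor $G : \zig_{+}^2(X) \to \sD$ sends the adjunction $f^\to \dashv f^\leftarrow$ established in \Cref{prp:adj-in-zig} to an adjunction in $\sD$, so $G(f^\to)$ is a left adjoint for every $f \in X_1$. Call the factored map $\alpha : \Map(\zig_{+}^2(X), \sD) \to \Map(X, \tau_1 \sD^{\mathrm{ladj}})$; we want to show $\alpha$ is an equivalence.

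Second, I would establish a mild strengthening of \Cref{crl:factorization-through-inclusion}: given any $(\infty,2)$-category $\sD$ and any map $F : X \to \sD$ (from a Segal space) such that every $1$-morphism in the image of $F$ admits a right adjoint in $\sD$, the induced map $\zig_{+}^2(X) \to \zig_{+}^2(\sD)$ factors through the canonical inclusion $\sD \to \zig_{+}^2(\sD)$. The proof is essentially the same as that of \Cref{crl:factorization-through-inclusion}: by \Cref{crl:final-generation} it suffices to check on the generating $2$-morphisms, and the explicit zigzag argument there uses the sinister hypothesis only to produce, for each $f \in X_1$, a right adjoint $g$ of $F(f)$ in $\sD$ together with unit and counit; and that data exists precisely because $F(f)$ is a left adjoint. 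Applying this strengthened statement to the inclusion $\tau_1 \sD^{\mathrm{ladj}} \hookrightarrow \sD$ (which by definition sends every morphism to a left adjoint) produces a canonical map $p : \zig_{+}^2(\tau_1 \sD^{\mathrm{ladj}}) \to \sD$ whose restriction to $\tau_1 \sD^{\mathrm{ladj}}$ is the inclusion.

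Third, with $p$ in hand, I would run the two halves of the proof of \Cref{thm:univ-prop} essentially verbatim. For the monomorphism part, the fiber of $\alpha$ over an $E : X \to \tau_1 \sD^{\mathrm{ladj}}$ is again identified with $\prod_{f \in X_1} \Adj_\sD(E(f))$; each factor is now nonempty by the very definition of $\tau_1 \sD^{\mathrm{ladj}}$, hence contractible by \cite[Theorem 4.4.18]{RV2016}. For essential surjectivity, the assignment $F \mapsto p \circ \zig_{+}^2(F)$ defines a section of $\alpha$ on path components, just as before.

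The only nonroutine step is verifying the strengthened \Cref{crl:factorization-through-inclusion}, and I expect this to be the main (small) obstacle: one must check carefully that the zigzag-equivalences used to identify $\tilde{F}(f^\leftarrow)$, $\tilde{F}(\eta_f)$, and $\tilde{F}(\varepsilon_f)$ with genuine adjoint data in $\sD$ only invoke the existence of a right adjoint for each $F(f)$, rather than the full sinister condition on $\sD$. Once that bookkeeping is done, the rest of the argument assembles as above.
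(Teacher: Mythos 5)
Your proposal is correct, and it supplies precisely the content that the paper leaves implicit: \Cref{crl:univ-prop} is stated as an ``immediate consequence'' of \Cref{thm:univ-prop} with no written argument, and the intended derivation is exactly the one you carry out, namely rerunning the proof of \Cref{thm:univ-prop} after weakening ``$\sD$ sinister'' to ``the relevant $1$-morphisms are left adjoints''. This inspection of the proof is genuinely needed, not mere bookkeeping: the corollary is not a formal consequence of the \emph{statement} of \Cref{thm:univ-prop}, because the locally full sub-$2$-category of $\sD$ spanned by the left-adjoint $1$-morphisms need not be sinister (a right adjoint of a left adjoint need not itself be a left adjoint), so one cannot simply apply the theorem to a sinister replacement of $\sD$. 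Your two supporting observations are exactly right: the image of $i^\ast$ lands in $\Map(X,\tau_1\sD^{\mathrm{ladj}})$ since functors of $(\infty,2)$-categories preserve adjunctions, and the proof of \Cref{crl:factorization-through-inclusion} invokes sinisterness only to produce, for each $f\in X_1$, a right adjoint of $F(f)$ together with (co)unit data via \cite[Theorem 4.4.18]{RV2016}, so it goes through verbatim under your hypothesis; similarly, the fiber identification $\prod_{f\in X_1}\Adj_\sD(E(f))$ in the monomorphism step only needs each factor to be nonempty, which restricting the target to $\tau_1\sD^{\mathrm{ladj}}$ guarantees.
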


\begin{crl} \label{crl:adj}
	There is an equivalence $\zig_{+}^2([1]) \simeq \Adj$ between the zigzagification of the walking arrow and the free adjunction.
\end{crl}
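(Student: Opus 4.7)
The plan is to show that $\zig_{+}^2([1])$ and $\Adj$ corepresent the same functor on $\fCat_{(\infty,2)}$, so that the equivalence follows from the Yoneda lemma. The work has already been done in \Cref{crl:univ-prop}; all that remains is to identify the two target functors and to construct a canonical comparison map.

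First I would unpack both sides of the proposed equivalence. By \Cref{crl:univ-prop}, for any $(\infty,2)$-category $\sD$ one has
\begin{equation*}
	\Map(\zig_{+}^2([1]), \sD) \simeq \Map([1], \tau_1 \sD^{\mathrm{ladj}}) \simeq (\tau_1 \sD^{\mathrm{ladj}})_1,
\end{equation*}
i.e.\ the space of $1$-morphisms in $\sD$ which admit a right adjoint. On the other hand, by the defining universal property of $\Adj$ (see \Cref{dfn:adj}), $\Map(\Adj, \sD)$ is the space of adjunction data $(f,g,\eta,\varepsilon)$ in $\sD$. The key input is then \cite[Theorem 4.4.18]{RV2016}, already invoked in the proof of \Cref{thm:univ-prop}: the forgetful map
\begin{equation*}
	\Map(\Adj, \sD) \to (\tau_1 \sD)_1, \qquad (f,g,\eta,\varepsilon) \mapsto f,
\end{equation*}
has fibres which are either empty or contractible, and its essential image is exactly $(\tau_1 \sD^{\mathrm{ladj}})_1$. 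Consequently $\Map(\Adj, \sD) \simeq \Map(\zig_{+}^2([1]), \sD)$, naturally in $\sD$.

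To promote this into an equivalence $\zig_{+}^2([1]) \simeq \Adj$ in $\fCat_{(\infty,2)}$ I would produce an explicit map and appeal to Yoneda. The inclusion $[1] \to \Adj$ picks out the left adjoint $f$, which is a left adjoint in $\Adj$, so $\Adj$ is ``$1$-sinister for the chosen morphism $f$''. Applying the universal property of \Cref{crl:univ-prop} with $\sD = \Adj$ to this inclusion, one obtains a canonical comparison functor $\Phi : \zig_{+}^2([1]) \to \Adj$ extending $[1] \hookrightarrow \Adj$. Naturality of the identifications above ensures that precomposition with $\Phi$ realises the equivalence $\Map(\Adj, \sD) \simeq \Map(\zig_{+}^2([1]), \sD)$ of the previous paragraph for every $\sD$, so $\Phi$ is an equivalence by Yoneda.

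The only mild subtlety is that \Cref{crl:univ-prop} is stated for targets in $\fCat_{(\infty,2)}$, so one has to make sure $\Adj$ is being treated as a (complete globular) $2$-uple Segal space, as agreed in \Cref{sec:completeness}; this is automatic since $\Adj$ is a strict $2$-category. No genuine obstacle arises: the hard work has been absorbed into \Cref{thm:univ-prop} and the contractibility of adjunction data, and the final step is purely formal.
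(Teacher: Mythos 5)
Your argument is correct and is essentially the paper's: the corollary is stated there as an immediate consequence of \Cref{thm:univ-prop}/\Cref{crl:univ-prop}, and the implicit reasoning is exactly the corepresentability comparison you spell out, with the same appeal to \cite[Theorem 4.4.18]{RV2016} (already used in the proof of \Cref{thm:univ-prop}) to identify $\Map(\Adj,\sD)$ with the space of left-adjoint $1$-morphisms, followed by Yoneda. Your extra care about completeness of $\Adj$ and the explicit comparison functor $\Phi$ just makes the ``immediate consequence'' explicit; there is no divergence in method.
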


The last corollary can be gleamed from the graphical representation of the $2$-morphisms of $\zig_{+}^2([1])$. Indeed, in this setting we have exactly two generating $2$-morphisms, $\eta$ and $\varepsilon$, the unit and counit for the adjunction $f^\to \dashv f^\leftarrow$. On top of the zigzags representing $\id_2(f^\to)$, $\id_2(f^\leftarrow)$, $\id_1(f)$, $\eta$ and $\varepsilon$ we can draw lines connecting the midpoints of the arrows in each commutative diagram, as in \Cref{fig:unit-counit}.
\begin{figure}[h!]
	\centering
	\begin{tikzcd}[column sep = tiny, row sep = tiny]
		\bullet \ar[rr] \ar[dd, equal] & \ar[dd, color = red, dash, start anchor = {[yshift = 1ex]}] & \bullet \ar[dd, equal] \\
		& {} & \\
		\bullet \ar[rr] & {} & \bullet
	\end{tikzcd}
	\qquad
	\begin{tikzcd}[column sep = tiny, row sep = tiny]
		\bullet \ar[dd, equal] & \ar[dd, color = red, dash, start anchor = {[yshift = 1ex]}] & \bullet \ar[ll] \ar[dd, equal] \\
		& {} & \\
		\bullet & {} & \bullet \ar[ll]
	\end{tikzcd}
	\qquad
	\begin{tikzcd}[column sep = tiny, row sep = tiny]
		\bullet \ar[rr, equal] \ar[dd] & & \bullet \ar[dd] \\
		\ar[rr, color = red, dash, start anchor = {[xshift = -0.5ex]}, end anchor = {[xshift = 0.5ex]}] & {} & {} \\
		\bullet \ar[rr, equal] & & \bullet
	\end{tikzcd}
	\qquad
	\begin{tikzcd}[column sep = tiny, row sep = tiny]
		\bullet \ar[dd, equal] \ar[rr, equal] & & \bullet \ar[dd] & & \bullet \ar[ll, equal] \ar[dd, equal] \\ 
		& & {} & & \\
		\bullet \ar[rr] & \ar[rr, red, controls={+(0.5,0.8) and +(-0.5,0.8)}, dash, shift right = 0.2] & \bullet & {} & \bullet \ar[ll]
	\end{tikzcd}
	\qquad
	\begin{tikzcd}[column sep = tiny, row sep = tiny]
		\bullet \ar[dd, equal] & \ar[rr, red, controls={+(0.5,-0.8) and +(-0.5,-0.8)}, dash, shift left = 1.4] & \bullet \ar[ll] \ar[rr] & {} & \bullet \ar[dd, equal] \\ 
		& & {} & & \\
		\bullet \ar[rr, equal] & & \bullet \ar[from=uu] & & \bullet \ar[ll, equal]
	\end{tikzcd}
	\caption{The identities, the unit and the counit as zigzags (in black) and as curves (in red).}
	\label{fig:unit-counit}
\end{figure}
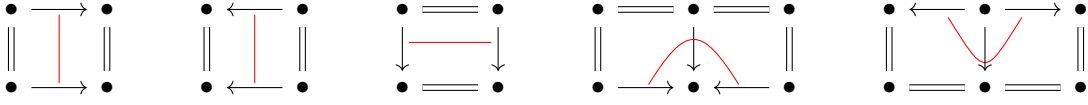

Using this graphical notation, one of the snake equations reads
\begin{equation*}
	\begin{tikzcd}[column sep = tiny, row sep = tiny]
		\bullet \ar[dd, equal] \ar[rr, equal] & & \bullet \ar[dd] & & \bullet \ar[ll, equal] \ar[dd, equal] \ar[rr] & \ar[dd, color = red, dash, start anchor = {[yshift = 1ex]}] & \bullet \ar[dd, equal] \\ 
		& & {} & & & & \\
		\bullet \ar[rr] \ar[dd, equal] & \ar[rr, red, controls={+(0.5,0.8) and +(-0.5,0.8)}, dash, shift right = 0.2] \ar[dd, color = red, dash, start anchor = {[yshift = 1ex]}] & \bullet \ar[dd, equal] & {} \ar[rr, red, controls={+(0.5,-0.8) and +(-0.5,-0.8)}, dash, shift left = 1.4] & \bullet \ar[ll] \ar[rr] & {} & \bullet \ar[dd, equal] \\
		& & {} & & & & \\
		\bullet \ar[rr] & {} & \bullet \ar[rr, equal] & & \bullet \ar[rr, equal] \ar[from=uu] & & \bullet
	\end{tikzcd}
	\, \simeq \,
	\begin{tikzcd}[column sep = tiny, row sep = tiny]
		\bullet \ar[rr] \ar[dd, equal] & \ar[dd, color = red, dash, start anchor = {[yshift = 1ex]}] & \bullet \ar[dd, equal] \\
		& {} & \\
		\bullet \ar[rr] & {} & \bullet
	\end{tikzcd}
\end{equation*}
and the other is similar. After pasting these drawings together using zigzags we re-obtain the graphical calculus for $\Adj$ of \cite[Section 3.1]{RV2016} and of \cite[Section 6, Example 2]{DPP2003}.

\section{Higher-dimensional behavior of zigzagification} \label{sec:extensions}

We conclude the paper with a speculative section where we provide a few observations about the general case of $\zig_{+}^{n+1}(X)$ for $X \in \Glb^{n,n}(\Spaces)$ and hint at a possible universal property.

\subsection{Adjoints from zigzags of lax commutative $(n+1)$-cubes}

Recall briefly how we built right adjoints of $1$-morphisms $f \in X_1$ in $\zig_{+}^2(X)$: we first produced a companionship for $f^\to$ in $\Sq^2(X)$, which in particular gave us commutative squares $E_f$ and $H_f$, and then glued those commutative squares with their horizontal opposites to obtain (co)units $\varepsilon$ and $\eta$ for the adjunction $f^\to \dashv f^\leftarrow$. 

\begin{prp} \label{prp:all-adjoints}
	Every $k$-morphism $\alpha$ of $X$ has a right adjoint in $\zig_{+}^{n+1}(X)$.
\end{prp}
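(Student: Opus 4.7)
The plan is to generalize the two-dimensional argument of \Cref{prp:adj-in-zig} to an arbitrary $k$-morphism $\alpha \colon u \to v$ of $X$ (with $u, v$ parallel $(k-1)$-morphisms), proceeding in three steps: identify a candidate right adjoint $\alpha^R$ at dimension $k$, construct a unit and counit at dimension $k+1$, and verify the snake equations via an auxiliary higher cell.

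First I would take $\alpha^R$ to be the image of $\alpha$ under the composite
\[
	\Sq^{n+1}_{\undl{b}_k, +}(X)_{\undl{1^k}, \undl{0^{n+1-k}}} \to Z^{n+1}_+(X)_{\undl{1^k}, \undl{0^{n+1-k}}} \to \zig_+^{n+1}(X)_{\undl{1^k}, \undl{0^{n+1-k}}},
\]
where $\undl{b}_k \in \Lambda^n$ is the signature with $-$ in coordinate $k$ and $+$ in all other coordinates. By the formula in \Cref{rmk:explicit-sq} this amounts to applying $\op_k$ in the $k$-direction, so $\alpha^R$ goes from $v$ to $u$, as expected of a right adjoint.

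Next, for the (co)units I would produce higher-dimensional analogues of $H_f, E_f$: explicit $(k+1)$-morphisms $H_\alpha, E_\alpha \in \Sq^{n+1}(X)_{\undl{1^{k+1}}, \undl{0^{n-k}}}$ defined as composites $\square(k+1) \to w(k) \xrightarrow{\alpha} X$ through two ``diagonal'' collapses of the walking $(k+1)$-cube onto the walking $k$-morphism. These collapses generalize the two obvious monotone maps $\square(2) \to [1]$ used to define $H_f, E_f$ in the 2D case, and can be verified to exist on the generators of $\square(k+1)$ described in \Cref{prp:cube-structure}. Applying the same construction inside $\Sq^{n+1}_{\undl{b}_k, +}(X)$ yields $H_{\alpha^{\op_k}}, E_{\alpha^{\op_k}}$, and I would set
\[
	\eta := H_{\alpha^{\op_k}} \circ_k H_\alpha, \qquad \varepsilon := E_\alpha \circ_k E_{\alpha^{\op_k}},
\]
where the compositions are taken in $\LL_k Z^{n+1}_+(X)$. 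A boundary check in the directions below $k$ verifies that both $\eta$ and $\varepsilon$ reduce to identities there, so they descend through $R^{n+1}_k$ to legitimate $(k+1)$-morphisms of $\zig_+^{n+1}(X)$.

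Finally, I would verify one of the snake equations by constructing an auxiliary cell $\beta$ at position $(\undl{1^k}, 2, \undl{0^{n-k}})$ in the intermediate space $(R^{n+1}_k \LL_k \cdots R^{n+1}_1 \LL_1)(Z^{n+1}_+(X))$, assembled as a horizontal composition of three $\bD$-2-simplex-shaped building blocks analogous to the $\beta_1, \beta_2, \beta_3$ in the proof of \Cref{prp:adj-in-zig}. Using \Cref{fct:formula-segalification}, its three slices in the $(k+1)$-coordinate would compute as $\id_{k+1}(\alpha^\to) \circ_k \eta$, $\varepsilon \circ_k \id_{k+1}(\alpha^\to)$, and $\id_{k+1}(\alpha^\to)$, which is exactly the snake equation after $\LL_{k+1}$; the other snake equation follows by the symmetric construction. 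The main obstacle will be the combinatorics of the generalized building blocks $H_\alpha, E_\alpha$: one must make the two collapse maps $\square(k+1) \to w(k)$ explicit and verify that $H_\alpha, E_\alpha$ satisfy the higher-dimensional companion-like relations that drive the snake equation argument. Once these relations are established on $\square(k+1)$, the rest of the proof is a formal repackaging of the two-dimensional case.
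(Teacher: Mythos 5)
Your overall scaffolding matches the paper's: take the sign-reversed copy of $\alpha$ as the candidate right adjoint, build companion-type cells $E_\alpha, H_\alpha$, glue them along the zigzag direction to get $\eta$ and $\varepsilon$, check globularity, and witness the snake equations by a cell at $(\undl{1^k},2,\undl{0^{n-k}})$ exactly as in \Cref{prp:adj-in-zig}. However, there is a genuine gap at the step you yourself flag as ``the main obstacle'': the existence of the collapse maps $\square(k+1) \to w(k)$ together with the verification that the resulting cells $E_\alpha, H_\alpha$ satisfy the companion identities ($E \circ H \simeq \id$ in the two relevant directions) is precisely the new higher-dimensional content of the proposition, and your proposal asserts it (``can be verified to exist on the generators'') rather than proving it. Checking these identities generator-by-generator on $\square(k+1)$ via \Cref{prp:cube-structure} is nontrivial combinatorics, and it must moreover be done coherently at the level of spaces of cells, not just on individual points; without it the snake-equation argument has no input.

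The paper avoids this combinatorial work by a different route. First, it reduces to the top-dimensional case $k=n$ by induction, using \Cref{prp:truncation-zigzag} to see that adjunctions of $k$-morphisms for $k<n$ are already handled inside $\zig_{+}^{k+1}(\tau_k X)$. Second, for $k=n$ it invokes the closedness of the Gray tensor product ($\square(n-1)\otimes - \dashv R_{\square(n-1)}$, from \cite{Campion2023}) to identify $\Sq^{n+1}(X)_{\undl{1^{n-1}},\bullet,\bullet} \simeq \Sq^2(\tau_2 R_{\square(n-1)}(X))$, so that the required $n$-dimensional companionship is literally a $1$-dimensional companionship in a square construction, which is already known from \Cref{rmk:E-H-decomp}/\Cref{rmk:1-d-companions}; only then does it run the $\beta_1,\beta_2,\beta_3$-style snake argument with $R_1^2\LL_1$ replaced by $R_n^{n+1}\LL_n$. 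If you want to keep your direct-for-all-$k$ approach, you would either need to carry out the explicit construction and verification of the two collapse maps and the companion relations (essentially reproving a higher-dimensional analogue of \Cref{rmk:E-H-decomp} by hand), or substitute the paper's adjunction trick, which is what makes the statement accessible without new cube combinatorics. A minor additional imprecision: your compositions ``in $\LL_k Z^{n+1}_+(X)$'' should take place in the intermediate object $(R_k^{n+1}\LL_k \dotsb R_1^{n+1}\LL_1)(Z^{n+1}_+(X))$, since $Z^{n+1}_+(X)$ itself is only a pointwise colimit and is not Segal in any coordinate.
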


The proof of this statement is similar to that of \Cref{prp:adj-in-zig}. By induction, using \Cref{prp:truncation-zigzag}, the low-dimensional adjunctions ($k < n$) are taken care of in $\zig_{+}^{k+1}(\tau_{k} X)$, so we can assume $k = n$.

\begin{dfn} \label{dfn:companions}
	Let $Y \in \Seg^{n+1}(\Spaces)$. 
	\begin{itemize}
		\item If $n = 1$, a \emph{($1$-dimensional) companionship} in $Y$ is the data of $f \in X_{1,0}$, $g \in X_{0,1}$, and $H, E \in X_{1,1}$ satisfying $E \circ_1 H \simeq \id_1(g)$ and $E \circ_2 H \simeq \id_2(f)$.
		\item if $n \geq 2$, a \emph{($n$-dimensional) companionship} is a companionship in the double Segal space $Y_{1, \dotsc, 1, \bullet, \bullet}$.
	\end{itemize}
\end{dfn}

\begin{rmk} \label{rmk:1-d-companions}
	The content of \Cref{rmk:E-H-decomp} is that every $(1,0)$-morphism of $\Sq^2(X)$ extends to a $1$-dimensional companionship.
\end{rmk}

\begin{prp} \label{prp:mainthm2}
	If $X \in \GlbSeg^{n,n}(\Spaces)$ then every $(\undl{1^n}, 0)$-morphism of $\Sq^{n+1}(X)$ extends to an $n$-dimensional companionship. In particular, every $n$-morphism $\alpha$ of $X$ has a right adjoint $\alpha^{\leftarrow}$ in $\zig_{+}^{n+1}(X)$.
\end{prp}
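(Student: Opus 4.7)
The plan is to adapt the strategy of \Cref{prp:adj-in-zig} and the $n = 1$ companionship construction (\Cref{rmk:1-d-companions}) to arbitrary $n$. The proof splits into two main steps: constructing the companionship data in $\Sq^{n+1}(X)$, then deducing the adjunction and verifying the snake equations in $\zig_{+}^{n+1}(X)$.

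First I would identify the given $(\undl{1^n}, 0)$-morphism with a map $\alpha : \square(n) \to X$ via \Cref{lmm:square-segal}(c), and construct the companionship data $E_\alpha, H_\alpha \in \Sq^{n+1}(X)_{\undl{1^{n-1}}, 1, 1} \simeq \Map(\square(n+1), X)$ as precompositions $E_\alpha := \alpha \circ \rho^E$ and $H_\alpha := \alpha \circ \rho^H$ with two explicit collapse functors $\rho^E, \rho^H : \square(n+1) \to \square(n)$. On vertices, $\rho^E$ fixes the source vertex of $\square(n+1)$ and collapses the target-adjacent vertices onto the target vertex of $\square(n)$ (extending across $k$-faces via the appropriate compositions of identities in $\square(n)$), while $\rho^H$ is dual. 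For $n = 1$ these reproduce the squares $E_f, H_f$ from the discussion preceding \Cref{rmk:E-H-decomp}. The companionship relations $E_\alpha \circ_n H_\alpha \simeq \id_{n+1}(\alpha^\to)$ and $E_\alpha \circ_{n+1} H_\alpha \simeq \id_n(\alpha^\downarrow)$ then reduce, by precomposition with $\alpha$, to identities between pasted compositions of $\rho^E$ and $\rho^H$ at the level of cubes in $\square(n)$, which can be verified using the co-Segal decomposition of \Cref{lmm:square-segal}(b).

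For the second step I would mimic \Cref{prp:adj-in-zig}. The opposite $\alpha^{\op_1}$ yields companionship data $E_{\alpha^{\op_1}}, H_{\alpha^{\op_1}}$ in $\Sq^{n+1}_{-,+,\dotsc,+}(X)$, so one can form
\begin{equation*}
    \eta := H_{\alpha^{\op_1}} \circ_1 H_\alpha, \qquad \varepsilon := E_\alpha \circ_1 E_{\alpha^{\op_1}}
\end{equation*}
as points in $(\LL_1 Z^{n+1}_+(X))_{\undl{1^n}, 1}$; their first-coordinate boundaries are identities by direct inspection of the companionship data, so $\eta, \varepsilon$ descend to $(n+1)$-morphisms of $\zig_{+}^{n+1}(X)$. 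The snake equations then follow by exhibiting a globular $(\undl{1^{n-1}}, 1, 2)$-cell $\beta \in R^{n+1}_1 \LL_1 Z^{n+1}_+(X)$ whose three Segal slices in the $(n+1)$-direction realize, respectively, the two sides of the snake equation and $\id_{n+1}(\alpha^\to)$. As in the proof of \Cref{prp:adj-in-zig}, $\beta$ is assembled by free $\circ_1$-composition (computed via \Cref{fct:formula-segalification}) of three elements of $Z^{n+1}_+(X)_{\undl{1^{n-1}}, 1, 2}$ built from the pairs $(H_\alpha, \id)$, $(H_{\alpha^{\op_1}}, E_{\alpha^{\op_1}})$, and $(\id, E_\alpha)$; the $02$-slice reduces to $\id_{n+1}(\alpha^\to)$ using the companionship relations from the first step.

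The hard part will be verifying that $\rho^E$ and $\rho^H$ are genuine functors of lax $(n+1)$-cubes compatible with the Gray tensor product structure on $\square(n+1) \simeq \square(n) \otimes [1]$, and that the cube-level pasting identities underlying the companionship relations hold. By \Cref{prp:cube-structure}, this reduces to checking that the parities of $k$-dimensional faces of $\square(n+1)$ are compatible with the collapses and that the pasted composites descend to the expected degenerate cubes in $\square(n)$. Once these identities are in place, the snake equation argument is a direct (if bookkeeping-heavy) generalization of the $n = 1$ case.
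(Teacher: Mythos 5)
Your high-level split of the proof into two steps (build a companionship in $\Sq^{n+1}(X)$, then glue it into an adjunction in $\zig_{+}^{n+1}(X)$ by replaying the argument of \Cref{prp:adj-in-zig}) matches the paper. However, for the companionship step you commit to constructing explicit collapse functors $\rho^E, \rho^H : \square(n+1) \to \square(n)$ and then verifying functoriality and the cube-level pasting identities by hand, which you yourself flag as ``the hard part.'' The paper avoids all of this: by the closedness of the Gray tensor product, with $A \otimes - \dashv R_A$, one has an equivalence $\Sq^{n+1}(X)_{\undl{1^{n-1}}, \bullet, \bullet} \simeq \Sq^2(\tau_2 R_{\square(n-1)}(X))$, and an $n$-dimensional companionship in the left-hand side is by \Cref{dfn:companions} exactly a $1$-dimensional companionship in the right-hand side, which is supplied by \Cref{rmk:1-d-companions}. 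If you insist on the explicit route, the factorization that makes it work is $\square(n+1) \simeq \square(n-1) \otimes \square(2)$ rather than the $\square(n) \otimes [1]$ you cite; then $\rho^E = \id_{\square(n-1)} \otimes r^E$ and $\rho^H = \id_{\square(n-1)} \otimes r^H$, where $r^E, r^H : \square(2) \to [1]$ are the $n=1$ collapses, and functoriality and the pasting identities follow from functoriality of $\otimes$ together with the $n=1$ relations. Your vertex-level description (``fix the source, collapse the target-adjacent vertices onto the target'') does not agree with $\id_{\square(n-1)} \otimes r^E$ — the latter is the identity on the first $n-1$ coordinates and only collapses the last two — so as written the collapse maps are underspecified and the verifications you defer are a genuine gap, not bookkeeping.

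There is also a concrete indexing error in the second step. You use $\op_1$, $\circ_1$, $\LL_1$, and ``first-coordinate boundaries'' throughout, but the zigzag that produces the right adjoint of an $n$-morphism lives in direction $n$: $\alpha^\leftarrow$ is a morphism of $\Sq^{n+1}(X)^{\op_n}$, the (co)unit compositions are $\circ_n$, the free composition and the snake-equation witness $\beta$ are formed at the $R_n^{n+1}\LL_n$ stage applied after $R_{n-1}^{n+1}\LL_{n-1}\dotsb R_1^{n+1}\LL_1$ (the paper says ``with $R_1^2\LL_1$ replaced by $R_n^{n+1}\LL_n$''), and the globularity check is on $\partial_n^l$. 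For $n=1$ your indices coincide with the correct ones, which is why the formulae look right, but for $n \geq 2$ they do not. With the $R_{\square(n-1)}$ reduction replacing your hand-built collapses and the indices corrected from $1$ to $n$, the argument closes.
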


\begin{proof}
	Using the fact that the Gray tensor product has a right adjoint (see \cite{Campion2023}), say $A \otimes - \dashv R_A$, we see that $\Sq^{n+1}(X)_{\undl{1^{n-1}}, \bullet, \bullet}$ is equivalent to $\Sq^2(\tau_2 R_{\square(n-1)}(X))$. By definition, $n$-dimensional companionships in the former are $1$-dimensional companionships in the latter, and so we're done by \Cref{rmk:1-d-companions}.
	
	An $n$-morphism $\alpha$ of $X$ induces $(\undl{1^n}, 0)$-morphisms $\alpha^\to$ of $\Sq^{n+1}(X)$ and $\alpha^{\leftarrow}$ of $\Sq^{n+1}(X)^{\op_n}$, together with a companionship between $\alpha^\to$ and the $(\undl{1^{n-1}}, 0, 1)$-morphism $\alpha^\downarrow$ in $\Sq^{n+1}(X)$. Note that $\alpha^\to$ and $\alpha^\leftarrow$ are globular up to height $n$ and $\alpha^\downarrow$ is globular up to height $n-1$, meaning that the companionship extends to $(R_{n-1}^{n+1} \LL_{n-1} \dotsb R_1^{n+1} \LL_1 Z_+^{n+1})(X)$. Now the same argument used in \Cref{prp:adj-in-zig} (with $R_1^2 \LL_1$ replaced by $R_n^{n+1} \LL_n$) shows that the the companionship data for $\alpha^\to$ gives two $(n+1)$-morphisms $\varepsilon_\alpha$ and $\eta_\alpha$ of $\zig_{+}^{n+1}(X)$ satisfying the snake equations.
\end{proof}

\begin{crl} \label{crl:ambidextrous}
	For $k \leq n-1$, every $k$-morphism $\alpha$ of $X$ has an ambidextrous adjoint in $\zig_{+}^{n+1}(X)$: the $n$-morphism $\alpha^\leftarrow$ satisfies $\alpha^\to \dashv \alpha^\leftarrow$ and $\alpha^\leftarrow \dashv \alpha^\to$.
\end{crl}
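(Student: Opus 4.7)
The plan is to exploit the fact that \Cref{prp:mainthm2}/\Cref{prp:all-adjoints} guarantees right adjoints at \emph{every} height up to $n$, so that the adjunction $\alpha^\to \dashv \alpha^\leftarrow$ can be dualized one level up to produce an adjunction in the opposite direction. Concretely, fix a $k$-morphism $\alpha$ of $X$ with $k \leq n-1$. By \Cref{prp:all-adjoints} applied at height $k$, there is a right adjoint $\alpha^\leftarrow$ with unit $\eta : \id(x) \Rightarrow \alpha^\leftarrow \circ_k \alpha^\to$ and counit $\varepsilon : \alpha^\to \circ_k \alpha^\leftarrow \Rightarrow \id(y)$, both of which are $(k+1)$-morphisms in $\zig_+^{n+1}(X)$. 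Since $k+1 \leq n$, \Cref{prp:all-adjoints} applies again, producing right adjoints $\eta^R$ and $\varepsilon^R$ for $\eta$ and $\varepsilon$ themselves.

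The key tool is that adjoints compose, a standard fact in higher category theory. Given two adjunctions $\phi \dashv \psi$ and $\phi' \dashv \psi'$ of $j$-morphisms, whenever $\phi' \circ_i \phi$ is defined with $1 \leq i < j$ one has $\phi' \circ_i \phi \dashv \psi' \circ_i \psi$, while for $i = j$ one has $\phi' \circ_j \phi \dashv \psi \circ_j \psi'$ (the order of the right adjoints is reversed). The (co)units of these composite adjunctions are appropriate pastings of $\eta, \varepsilon, \eta', \varepsilon'$. Moreover, identities are self-adjoint: $\id(\phi) \dashv \id(\phi)$ for any morphism $\phi$, and right adjoints are unique up to equivalence whenever they exist.

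Armed with these observations, the strategy is to apply $(-)^R$ to the snake equations
\[
(\varepsilon \circ_k \id(\alpha^\to)) \circ_{k+1} (\id(\alpha^\to) \circ_k \eta) \simeq \id(\alpha^\to), \qquad (\id(\alpha^\leftarrow) \circ_k \varepsilon) \circ_{k+1} (\eta \circ_k \id(\alpha^\leftarrow)) \simeq \id(\alpha^\leftarrow).
\]
Each $(k+1)$-morphism appearing on the left-hand sides admits a right adjoint ($\varepsilon^R$, $\eta^R$, $\id(\alpha^\to)$, and $\id(\alpha^\leftarrow)$, respectively); passing to right adjoints and applying the composition rules above yields
\[
(\id(\alpha^\leftarrow) \circ_k \eta^R) \circ_{k+1} (\varepsilon^R \circ_k \id(\alpha^\leftarrow)) \simeq \id(\alpha^\leftarrow), \qquad (\eta^R \circ_k \id(\alpha^\to)) \circ_{k+1} (\id(\alpha^\to) \circ_k \varepsilon^R) \simeq \id(\alpha^\to),
\]
which are precisely the snake equations witnessing an adjunction $\alpha^\leftarrow \dashv \alpha^\to$ with unit $\varepsilon^R$ and counit $\eta^R$.

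The main subtlety is bookkeeping around the compositionality of right adjoints in the higher categorical setting: one has to track the reversal of order when composing in the $(k+1)$st direction and check that all the intermediate pastings behave correctly. This is essentially the content of \cite[Remark 3.4.22]{Lurie2009} and is carried out in detail in \cite[Lemma 1.4.4]{DSSP2020}, so the cleanest presentation is to cite these references rather than reprove the composition lemma. No other obstacle arises, since \Cref{prp:all-adjoints} already does all the heavy lifting of producing the required higher right adjoints.
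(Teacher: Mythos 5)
Your approach diverges from the paper's and, unfortunately, contains a genuine gap. The paper proves \Cref{crl:ambidextrous} by exploiting a symmetry of the construction: for $k \leq n-1$ the equivalence $\Sq^{n+1}_{\undl{b},+}(X)^{\op_k} \simeq \Sq^{n+1}_{\undl{b'},+}(X)$ (where $\undl{b'}$ flips the sign in coordinate $k$) assembles into an equivalence $\zig_+^{n+1}(X)^{\op_k} \simeq \zig_+^{n+1}(X)$ exchanging $\alpha^\to$ and $\alpha^\leftarrow$, and this immediately transports the adjunction $\alpha^\to \dashv \alpha^\leftarrow$ to the adjunction $\alpha^\leftarrow \dashv \alpha^\to$. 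No statement about right adjoints of morphisms \emph{other} than those in the image of $X$ is needed.

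Your argument instead dualizes the snake equations, which is the classical Lurie-style route and works whenever the ambient $(\infty,n+1)$-category is sinister at height $k+1$. The problem is precisely that hypothesis: you invoke \Cref{prp:all-adjoints} to conclude that the unit $\eta$ and counit $\varepsilon$ of $\alpha^\to \dashv \alpha^\leftarrow$ admit right adjoints, but $\eta$ and $\varepsilon$ are \emph{not} $(k+1)$-morphisms of $X$. They are genuinely new $(k+1)$-morphisms of $\zig_+^{n+1}(X)$, assembled from the companionship data $E_\alpha$, $H_\alpha$ and their $\op$-counterparts (cf.\ the proof of \Cref{prp:mainthm2} and \Cref{prp:adj-in-zig}). \Cref{prp:all-adjoints} only produces right adjoints for morphisms in the image of the canonical inclusion $X \to \zig_+^{n+1}(X)$; it says nothing about arbitrary $(k+1)$-morphisms of $\zig_+^{n+1}(X)$, and in particular nothing about $\eta$ and $\varepsilon$. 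Whether $\zig_+^{n+1}(X)$ is sinister in the stronger sense you need is exactly what the paper leaves open: the remark immediately following \Cref{crl:ambidextrous} states only that the authors ``believe'' every $k$-morphism of $\zig_+^{n+1}(X)$ (for $k \leq n-1$) has an ambidextrous adjoint, and the generation statement that would feed such a bootstrap is \Cref{cnj:mainconj}, which is a conjecture. Without either supplying right adjoints for the specific morphisms $\eta$ and $\varepsilon$ (e.g.\ by first finding right adjoints of $E_\alpha$ and $H_\alpha$ and then using compositionality of adjoints), or proving the stronger sinisterness of $\zig_+^{n+1}(X)$, the dualization argument cannot be carried out, and the $\op_k$-symmetry argument of the paper is the cleaner route.
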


\begin{proof}
	The construction of $\zig_{+}^{n+1}(X)$ is invariant under the application of $(-)^{\op_k}$ for $k \leq n-1$, meaning that $\zig_{+}^{n+1}(X)^{\op_k} \simeq \zig_{+}^{n+1}(X)$. This is because $\Sq^{n+1}_{\undl{b},+}(X)^{\op_k} \simeq \Sq^{n+1}_{\undl{b'},+}(X)$ with $b_i' = b_i$ for $i \neq k$ and $b_k' = -b_k$. Under these equivalences, $\alpha^\to$ is sent to $\alpha^{\leftarrow}$ and viceversa. But in $\zig_{+}^{n+1}(X)^{\op_k}$ we have $\alpha^\leftarrow \dashv \alpha^\to$, and so the claim follows. 
\end{proof}

\begin{rmk}
	In fact, we believe that, for $k \leq n-1$, \emph{every} $k$-morphism of $\zig_{+}^{n+1}(X)$ has an ambidextrous adjoint. Such $k$-morphisms are given by taking $k$ zigzags of lax commutative $k$-cubes in $\tau_k X$ of signature $(\undl{b}, c) \in \Lambda^k$. Each cube has a counterpart obtained by considering the same cube but with signature $(\undl{b}, -c) \in \Lambda^k$. Rewriting the $k$ zigzags with the counterparts yields the desired adjoint, with (co)units obtained using the companionships of each cube.
\end{rmk}

\subsection{On generators}

The proof of \Cref{prp:horiz-decomp} depends on a certain decomposition result for $(1,1)$-morphisms of $\Sq^2(X)$, which is one part of the following universal property of $\Sq^2$: praphrasing the results of \cite{LR2025} (and deliberately omitting issues of completeness for simplicity's sake), $\Sq^2(X)$ is initial among those double Segal spaces which admit a map from $X$ that sends every $1$-morphism $f$ to a $(1,0)$-morphism with a companion.

It follows that if we wanted to prove a generators-and-relations result for $\zig_{+}^{n+1}(X)$ we could start by showing a similar universal property for $\Sq^{n+1}$. At the moment, however, it is unclear what such a property would say. In \Cref{dfn:companions} we defined an $n$-dimensional companionship for an $(n+1)$-uple Segal $Y$ to be a companionship in $Y_{1, \dotsc, 1, \bullet, \bullet}$, but we could reasonably form different double Segal spaces out of $Y$, such as $Y_{\bullet, \bullet, 1, \dotsc, 1}$ or $Y_{\bullet, 1, \dotsc, 1, \bullet}$, and ask for companionships there. Let's call them \emph{alternative companionships} to distinguish them from those of \Cref{dfn:companions}. It turns out that, in general, $\Sq^{n+1}(X)$ admits \emph{some} alternative companionships. For example, in $\Sq^3(X)$ we have that every $(1,0,1)$-morphism admits a companion $(0,1,1)$-morphism; not every $(1,1,0)$-morphism admits a companion $(0,1,1)$-morphism, but those $(1,1,0)$-morphisms that are globular up to height $1$ (i.e. that come from $2$-morphisms of $X$) do. In fact, implicit in the proof of \Cref{prp:all-adjoints} is the fact that every $k$-morphism of $X$, when considered as a $(\undl{1^k},\undl{0^{n+1-k}})$-morphism of $\Sq^{n+1}(X)$ via the canonical inclusion, admits a companion $(\undl{1^{k-1}},0,1,\undl{0^{n-k}})$-morphism.

The evidence from low dimensional cases and some wishful thinking leads us to formulate the following
\begin{cnj}
	Let $X \in \GlbSeg^{n,n}(X)$ and let $k \leq n+1$. The space of $(\undl{1^{k}},\undl{0^{n+1-k}})$-morphisms of $\Sq^{n+1}(X)$ is generated under composition by the $k$-morphisms of $X$ and the companionship (co)units for lower-dimensional morphisms of $X$.
\end{cnj}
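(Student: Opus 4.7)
The plan is induction on $k$. The cases $k \leq 1$ are trivial since $\square(0) \simeq [0]$ and $\square(1) \simeq [1]$, making the relevant morphism space reduce to $X_{\undl{1^k}, \undl{0^{n-k}}}$. The case $k = 2$ is a mild extension of \Cref{rmk:E-H-decomp}: a $(1, 1, \undl{0^{n-1}})$-morphism of $\Sq^{n+1}(X)$ is a lax commutative square in $X$ equipped with a $2$-morphism filler $\alpha$, and the decomposition formula of \Cref{rmk:E-H-decomp} applies verbatim once the canonical inclusion of $\alpha$ is inserted as an additional vertical composite layer between the two existing layers.

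For the inductive step ($k \geq 3$), view a $(\undl{1^k}, \undl{0^{n+1-k}})$-morphism as a map $C : \square(k) \to X$. By \Cref{prp:cube-structure}(c), $C$ is determined by the $k$-morphism $\alpha := C(I^k)$ of $X$ together with the images of all proper $(k-1)$-faces, each of which is a $(\undl{1^{k-1}}, \undl{0^{n+2-k}})$-morphism of $\Sq^{n+1}(X)$. When $k = n + 1$, the cell $\alpha$ is forced to be an identity since $X \in \GlbSeg^{n,n}(\Spaces)$ has no non-trivial $(n+1)$-morphisms; otherwise $\alpha$ is a genuine $k$-morphism generator. The even $(k-1)$-faces of $I^k$ compose to the source of $\alpha$ and the odd ones compose to its target, as dictated by \Cref{dfn:parity}.

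I would then generalize the $k = 2$ formula to a decomposition
\begin{equation*}
	C \simeq C^{\mathrm{trg}} \circ_k C^{\mathrm{mid}} \circ_k C^{\mathrm{src}},
\end{equation*}
where $C^{\mathrm{mid}}$ is the canonical inclusion of $\alpha$ as a $(\undl{1^k}, \undl{0^{n+1-k}})$-morphism (its top cell is $\alpha$ and its $(k-1)$-faces are identity-thickenings of composites), and $C^{\mathrm{src}}, C^{\mathrm{trg}}$ are $(\undl{1^k}, \undl{0^{n+1-k}})$-morphisms whose top cells are identities and whose $(k-1)$-face data encode, respectively, the actual source-faces and target-faces of $C$. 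Each of $C^{\mathrm{src}}, C^{\mathrm{trg}}$ should be built as a composite in directions $1, \ldots, k-1$ of canonical inclusions of the relevant $(k-1)$-face images of $C$ (as cubes with identity top cell) interwoven with companionship witnesses $E_\beta, H_\beta$ for certain $(k-1)$-morphisms $\beta$ arising from the source/target decomposition, playing the role of the $E_h, H_g$ in \Cref{rmk:E-H-decomp}. Applying the inductive hypothesis to each $(\undl{1^{k-1}}, \undl{0^{n+2-k}})$-face of $C$ further decomposes it into $(k-1)$-morphisms of $X$ and companionship (co)units for $j$-morphisms with $j < k - 1$, yielding the desired decomposition of $C$.

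The principal obstacle is writing down the formula explicitly and verifying its validity inside the $(n+1)$-uple Segal space $\Sq^{n+1}(X)$. The parity-based source/target pairing of faces from \Cref{prp:cube-structure}(b) becomes intricate for large $k$ (already $\square(3)$ has six $2$-faces paired nontrivially), and assembling companionship witnesses in the correct orientation requires extensive use of the interchange law. A secondary subtlety is that the companionship witnesses $E_\beta, H_\beta$ for a $(k-1)$-morphism $\beta$ naturally inhabit the $(\undl{1^{k-2}}, 1, 1, \undl{0^{n-k+1}})$-slot rather than the $(\undl{1^k}, \undl{0^{n+1-k}})$-slot; reaching the latter via composition with identity-thickenings along the appropriate extra direction should follow from the Segal structure but deserves a clean formalization, particularly when tracked through many inductive steps.
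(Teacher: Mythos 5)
First, note that the paper does not prove this statement: it is stated as a conjecture (the authors explicitly say it comes from ``evidence from low dimensional cases and some wishful thinking''), and the surrounding discussion makes clear that the missing ingredient is precisely a generators-and-relations understanding of $\Sq^{n+1}(X)$, e.g.\ via a Loubaton--Ruit-type universal property whose correct formulation the authors say is unclear. So there is no paper proof to compare against, and the question is whether your sketch closes the conjecture on its own. It does not. Your base cases are fine (for $k=2$ the insertion of the thickened filler $\kappa^\ast_X(\alpha)$ between the two layers of \Cref{rmk:E-H-decomp} is plausible, though it still needs the pasting verification, since \Cref{rmk:E-H-decomp} is proved only for $X$ a Segal space where the square strictly commutes). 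But the inductive step is exactly the open content, and as written it is a plan rather than an argument: the cubes $C^{\mathrm{src}}$ and $C^{\mathrm{trg}}$ are never constructed, and the identity $C \simeq C^{\mathrm{trg}} \circ_k C^{\mathrm{mid}} \circ_k C^{\mathrm{src}}$ is never verified. This is a genuine higher-dimensional cubical pasting statement (a ``shell decomposition'' of a lax $k$-cube), and the parity bookkeeping of \Cref{prp:cube-structure}(b), together with the interchange relations in the $(n+1)$-uple Segal space, is where all the difficulty sits; you acknowledge this yourself, which is honest, but it means the conjecture is not proved.

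Two further points. First, applying the inductive hypothesis to the $(k-1)$-faces of $C$ does not by itself decompose $C^{\mathrm{src}}$ or $C^{\mathrm{trg}}$: a $(\undl{1^k},\undl{0^{n+1-k}})$-morphism with degenerate top cell is a map $\square(k) \to X$, not merely the tuple of its $(k-1)$-faces, so turning face-level decompositions into a decomposition of the shell requires an additional argument (some analogue of the $E/H$-rewriting of \Cref{rmk:E-H-decomp} one dimension up), which is again the unproven core. Second, your ``secondary subtlety'' about slots is partly a miscount: for a $(k-1)$-morphism $\beta$ the companionship witnesses $E_\beta, H_\beta$ sit in degree $(\undl{1^{k-2}},1,1,\undl{0^{n+1-k}})$, which \emph{is} the degree $(\undl{1^{k}},\undl{0^{n+1-k}})$, so no transport is needed there; only for $j$-morphisms with $j<k-1$ do the witnesses live in strictly lower elementary degree, and then they enter via iterated identities, which the definition of ``generates under composition'' already permits. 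So the real gap is not slot bookkeeping but the unconstructed decomposition itself.
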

The claim would be but one step in the proof of a possible universal property for $\Sq^{n+1}(X)$, stating something along the lines of ``$\Sq^{n+1}(X)$ is initial among those $(n+1)$-uple Segal spaces which admit a map from $X$ sending every $k$-morphism to a $(\undl{1^{k}},\undl{0^{n+1-k}})$-morphism with a (suitable) companion''. 

For our purposes, the claim gives us a grasp on the $k$-morphisms of $\zig_{+}^{n+1}(X)$ when $X \in \GlbSeg^{n+1}(X)$: since the companionship (co)units in each $\Sq^{n+1}_{\undl{b},+}(X)$ glue together to form (co)units for adjunctions of $k$-morphisms in $\zig_{+}^{n+1}(X)$, we expect the following:
\begin{cnj} \label{cnj:mainconj}
	Let $X \in \GlbSeg^{n,n}(X)$ and let $k \leq n+1$. Then the space of $k$-morphisms of $\zig_{+}^{n+1}(X)$ is generated under composition by the $k$-morphisms of $X$ and the adjunction (co)units for lower-dimensional morphisms of $X$.
\end{cnj}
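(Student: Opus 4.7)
\textbf{Proof strategy for \Cref{cnj:mainconj}.} The plan is to induct on $n$, with the base case $n = 1$ handled by \Cref{lmm:vert-decomp}, \Cref{prp:horiz-decomp}, and \Cref{crl:final-generation}. Fix $n \geq 2$ and assume the conjecture for all smaller dimensions. Using \Cref{prp:truncation-zigzag}, the claim for $k \leq n$ reduces to the statement at stage $k-1$ applied to $\tau_k X$, since $\tau_k \zig_{+}^{n+1}(X) \simeq (R_k^k \LL_k \cdots R_1^k \LL_1)(Z^k(\tau_k X))$ and the right-hand side is precisely the object built in the same way from the $(k-1)$-truncation. Thus the only substantive case is $k = n+1$.

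\textbf{Step 1: iterated zigzag decomposition.} Unfolding the definition $\zig_{+}^{n+1}(X) = (R_{n+1}^{n+1} \LL_{n+1} R_n^{n+1} \LL_n \cdots R_1^{n+1} \LL_1)(Z_{+}^{n+1}(X))$ and applying the necklace formula of \Cref{fct:formula-segalification} one direction at a time, any $(n+1)$-morphism of $\zig_{+}^{n+1}(X)$ is a composition in direction $n+1$ of entries, each of which is a composition in direction $n$ of entries, and so on down to direction $1$; the innermost terms are $(\undl{1^{n+1}})$-morphisms of $\Sq^{n+1}_{\undl{b},+}(X)$ for various signatures $\undl{b} \in \Lambda^n$, and the $R_j^{n+1}$'s enforce globularity of all intermediate aggregates. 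This generalizes \Cref{lmm:vert-decomp} directly; no new ingredient is needed beyond care in tracking the ordering of the directions.

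\textbf{Step 2: rewriting innermost cubes.} Here we invoke the preceding conjecture on generators in $\Sq^{n+1}(X)$: every innermost lax cube is a composition of images of $(n+1)$-morphisms of $X$ together with companionship (co)units for lower-dimensional morphisms of $X$, analogously to the decomposition of \Cref{rmk:E-H-decomp}. A natural route to this auxiliary conjecture is by nested induction using the internal hom $R_{\square(k)}$ of Campion's tensor product, reducing each added direction to an application of \Cref{rmk:1-d-companions}; the argument is essentially packaged in \Cref{prp:mainthm2}, so what is needed beyond that proposition is only the decomposition analogue of \Cref{rmk:E-H-decomp} expressing a general lax $(n+1)$-cube as a composite of cubes in which all but one face are identities. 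This is the main obstacle: it is the statement we openly do not know in full generality.

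\textbf{Step 3: promoting companionships to adjunctions.} Mimicking the construction in \Cref{prp:adj-in-zig}, gluing a companionship (co)unit $H$ or $E$ in $\Sq^{n+1}_{\undl{b},+}(X)$ to its mirror image in $\Sq^{n+1}_{\undl{b'},+}(X)$, where $\undl{b'}$ flips the appropriate sign, produces a zigzag which, once the required boundary legs become identities via the outer globularity data supplied by the $R_j^{n+1}$'s, is precisely the adjunction unit $\eta_\alpha$ or counit $\varepsilon_\alpha$ for the $k$-morphism $\alpha$ constructed in \Cref{prp:mainthm2}. The verification that the interchange-type identity used in \Cref{prp:horiz-decomp} to collapse adjacent $E_f$'s and $E_{f^{\op_1}}$'s into a $\varepsilon_f$ carries over in each direction separately is routine bookkeeping, since the witnessing $(\undl{1^n},2)$-morphism is built pointwise from the companionship data. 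Combining Steps 1--3 expresses an arbitrary $(n+1)$-morphism of $\zig_{+}^{n+1}(X)$ as a composite of $(n+1)$-morphisms of $X$ and adjunction (co)units for lower-dimensional morphisms, completing the induction modulo the Step 2 conjecture.
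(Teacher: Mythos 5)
Be aware that \Cref{cnj:mainconj} is stated in the paper as a conjecture: there is no proof to compare against, only the heuristic reduction the authors sketch, namely (i) an auxiliary conjecture that the $(\undl{1^k},\undl{0^{n+1-k}})$-morphisms of $\Sq^{n+1}(X)$ are generated by the $k$-morphisms of $X$ and companionship (co)units, and (ii) the observation (via \Cref{prp:mainthm2}) that these companionship (co)units glue to adjunction (co)units in $\zig_{+}^{n+1}(X)$. Your proposal reproduces exactly this reduction, and you correctly identify Step~2 as the genuine open point -- it coincides with the paper's auxiliary conjecture. So your text is an honest strategy sketch rather than a proof, which is the same status the statement has in the paper.

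Two caveats on the parts you describe as settled. First, your reduction of the cases $k \leq n$ via \Cref{prp:truncation-zigzag} is not quite an instance of the conjecture one stage down: $\tau_k \zig_{+}^{n+1}(X)$ is built from $Z^{k}(\tau_k X)$, whose colimit runs over \emph{all} of $\Lambda^k$, whereas $\zig_{+}^{k}(\tau_{k-1}X)$ uses $Z^k_+$, with the last signature fixed at $+$. The truncation is therefore ``two-sided'' in the $k$th direction (it contains, e.g., the formal reverses $\alpha^{\leftarrow}$ of $k$-morphisms of $X$), so the inductive hypothesis does not literally apply and the intended generating set has to be re-examined there. Second, Step~3 is more than routine bookkeeping: what is needed beyond \Cref{prp:mainthm2} is the higher analogue of \Cref{prp:horiz-decomp}, i.e.\ a global reorganization of an iterated zigzag so that mirror-image companionship (co)units pair up into adjunction (co)units, carried out direction by direction while respecting the globularity constraints imposed by the interleaved $R_j^{n+1}$'s. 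Already in dimension $2$ this required the ad hoc witnessing $(1,2)$-morphism of \Cref{prp:adj-in-zig} precisely because the interchange law cannot be applied to non-globular columns, and the cancellations at the ends of the zigzag in \Cref{prp:horiz-decomp} used globularity in an essential way; no argument of this kind is available in the paper for $n \geq 2$, and the authors themselves stop at ``we expect''. So the honest summary is that your proposal has two gaps, not one: the $\Sq^{n+1}$ generation conjecture (which you flag) and the higher-dimensional pairing/decomposition argument of Step~3 (which you should not present as routine).
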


According to the conjecture, maps $\zig_{+}^{n+1}(X) \to \sD$ to a sinister $(n+1)$-category are determined by their restriction to $X$. It remains a mystery if we can go the other way, i.e. if we can produce maps $\zig_{+}^{n+1}(X) \to \sD$ from maps $X \to \sD$, because we have little to no knowledge about the relations among adjunction (co)units in dimensions $n \geq 2$.

\subsection{Relation to cobordisms and future work}

As mentioned in \Cref{sec:intro}, our initial motivation was to produce a combinatorial construction for the cobordism higher category $\mathrm{Bord}_n^{\mathrm{fr}}$. It turns out that $\zig_{+}^{n}$ can be used to approximate a different but closely related higher category, the $\EE_k$-monoidal $(\infty,n)$-category of embedded oriented tangles. Here is a brief explanation of how this works. More details on this topic will be presented in future work.

Consider $F^k$, the free $\EE_k$-algebra in $\Spaces$ on one generator. Under the equivalence between $\EE_k$-algebras and $k$-fold monoids in cartesian monoidal $\infty$-categories (see, for example, \cite[Proposition 10.11]{Haugseng2018}), $F^k$ induces a globular $k$-uple Segal space $B^k F^k$ with a contractible space of $j$-morphisms for all $0 \leq j < k$. Then $\zig_{+}^{k+n}(B^k F^k)$ is a globular $(k+n)$-uple Segal space with a contractible space of $j$-morphisms for all $0 \leq j < k$, and thus it induces an $\EE_k$-monoidal globular $n$-uple Segal space $T^{k,n} := \Omega^k \zig_{+}^{k+n}(B^k F^k)$.

We claim that the $j$-morphisms of $T^{k,n}$ can be turned into the data of compact oriented $j$-dimensional manifolds with corners. First note that $F^k$ can be modeled by the space $\mathrm{Conf}(k)$ of embeddings of points in $(0,1)^k$, the open unit $k$-cube; the $\EE_k$-monoidal structure is given by pasting two cubes along a face and then rescaling. Paths in this space are isotopies of embeddings, paths between paths are isotopies between isotopies, and so on. In particular, any map $f : [0,1]^r \to \mathrm{Conf}(k)$ from the closed $r$-cube can be ``realized'' as an oriented manifold: first factor $f$ as a map $[0,1]^r \to \mathrm{Emb}(m, (0,1)^k)$ into the space of embeddings of $m$ points $\{p_1, \dotsc, p_m\}$ into $(0,1)^k$, and then define
\begin{equation*}
	M_f := \{(x, y) \sub [0,1]^r \times (0,1)^k \mid y = f(x)(p_i) \text{ for some } i = 1, \dotsc, m\} \sub [0,1]^r \times (0,1)^k \sub \RR^{r + k}
\end{equation*}
Note that $M_f$ is abstractly diffeomorphic to the disjoint union of $m$ copies of $[0,1]^r$, but as a manifold with corners it might be non-trivial.

Now consider a map $\alpha : \square^{k+j} \to B^k F^k \simeq B^k \mathrm{Conf}(k)$ of $(\infty,k+j)$-categories. Composing with the inclusion $w(k+j) \to \square^{k+j}$ gives us a specific $(k+j)$-morphism of $B^k \mathrm{Conf}(k)$. For $j = 0$ this corresponds to a point of $\mathrm{Conf}(k)$, for $j = 1$ this is a path in $\mathrm{Conf}(k)$, for $j = 2$ it's a path between paths, and so on. In general it corresponds to a certain map $a : [0,1]^j \to \mathrm{Conf}(k)$ and thus, via the realization, a compact oriented $j$-dimensional manifold with corners $M_a$. The boundary of $M_a$ is given by the realization of the faces of $\square^{k+j}$ along $\alpha$.

The discussion above can be summarized by saying that $(k+j)$-dimensional morphisms of $\Sq^{k+j}(B^k F^k)$ can be turned into compact oriented $j$-dimensional manifolds with corners embedded in $\RR^{k+j}$. Similarly, a $(k+j)$-dimensional morphism $\alpha$ of $\Sq^{k+j}_{\undl{b}}(B^k F^k)$ gives the same manifold $M_a$ as the corresponding morphism $\alpha'$ in $\Sq^{k+j}(B^k F^k)$ but we introduce a ``formal orientation'' depending on the parity of $\sum_{i} b_i$: if the latter is even then the orientation of $M_a$ matches that of $M_{a'}$ and if the latter is odd then the orientation of $M_a$ is the opposite one.

Ultimately, our hope is the following:
\begin{cnj} \label{cnj:secondcnj}
	The above association from maps $\square^r \to B^k F^k$ to manifolds yields a map $T^{k,n} = \Omega^k \zig_{+}^{k+n}(B^k F^k) \to \mathrm{Tang}_{k,n}^{\mathrm{or}}$ into the $\EE_k$-monoidal $(\infty,n)$-category of oriented tangles defined in \cite{AF2017}.
\end{cnj}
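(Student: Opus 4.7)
The plan is to produce the desired map in stages matching the two-step definition of $\zig_{+}^{k+n}$: first I would geometrically realize signed cubes in $B^k F^k$ as oriented tangles, then extend along the zigzagification procedure, and finally loop $k$ times.

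The first step is to make precise the realization recipe sketched above as a natural transformation $\rho \colon \Sq^{k+n}(B^k F^k) \to B^k \Tang_{k,n}^{\orient}$ of $(k+n)$-simplicial spaces. A $\undl{t}$-morphism of $\Sq^{k+n}(B^k F^k)$ is a functor $\alpha \colon \square^{k+n}(\undl{t}) \to B^k F^k \simeq B^k \Conf(k)$; restricting to each generating $(k+j)$-cell yields a map $[0,1]^j \to \Conf(k)$ whose realization $M_f$ is a compact oriented $j$-manifold with corners embedded in $[0,1]^j \times (0,1)^k$. The cubical pasting structure of $\square^{k+n}(\undl{t})$ then glues these pieces along their boundaries into an object of $\Tang_{k,n}^{\orient}$ stratified as expected, and naturality in $\undl{t}$ under face and degeneracy maps is transparent on the geometric side, amounting respectively to restricting to a subface of the embedded manifold and to thickening by a trivial collar. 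Next I would extend $\rho$ to the signed pieces $\Sq_{\undl{b},+}^{k+n}(B^k F^k)$ by reversing the orientation on the $i$-th coordinate factor whenever $b_i = -$ and by collapsing that factor whenever $b_i = 0$; the resulting assignment is functorial in $\undl{b} \in \Lambda^n$ and therefore assembles into a map $Z_+^{k+n}(B^k F^k) \to B^k \Tang_{k,n}^{\orient}$ of presheaves on $\bD^{k+n}$.

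The second step is to extend across the operations $\LL_i$ and $R_i^{k+n}$ appearing in \Cref{dfn:zigzag}. Since $B^k \Tang_{k,n}^{\orient}$ is already a globular $(k+n)$-uple Segal space and the functors $\LL_i$, $R_i^{k+n}$ are components of left adjoints to the inclusions of Segal and globular subcategories, the map $Z_+^{k+n}(B^k F^k) \to B^k \Tang_{k,n}^{\orient}$ extends uniquely to $\zig_+^{k+n}(B^k F^k) \to B^k \Tang_{k,n}^{\orient}$. Geometrically, each $\LL_i$ corresponds to concatenating embedded manifolds along a matched boundary slice in the $i$-th coordinate direction -- precisely the composition in $\Tang_{k,n}^{\orient}$ -- and each $R_i^{k+n}$ records the fact that a zigzag whose relevant endpoint is trivial really comes from a lower-dimensional manifold. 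Looping $k$ times then produces the $\EE_k$-monoidal functor $T^{k,n} \to \Tang_{k,n}^{\orient}$, which by construction sends the generator $\ast \in F^k$ to the embedded point $\{0\} \hookrightarrow \RR^k$.

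The hard part will be the sign bookkeeping that ensures Step~1 is functorial over $\Lambda^n$: when two signed realizations meet along a shared $0$-signature slice, the local orientations must restrict to the same orientation on the common boundary, which reduces to a parity identity relating the sign $(-1)^{|\{i : b_i = -\}|}$ to the orientation induced by the cube reversals $F_{\undl{b}}$. A secondary difficulty is ensuring that the concatenations produced by $\LL_i$ yield genuinely smooth embedded manifolds with corners rather than merely piecewise-smooth ones, which amounts to invoking collar-uniqueness and gluing theorems simultaneously along all codimension-$r$ strata for $r$ up to $k+n$. Once these compatibilities are organized the remaining checks are formal, and the construction essentially writes itself.
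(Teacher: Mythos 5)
This statement is a \emph{conjecture} in the paper: the authors only sketch the association $f \mapsto M_f$ informally and explicitly defer the construction of the map to future work, so there is no proof to compare against. Your proposal does not close the gap; it essentially re-states the paper's heuristic sketch and defers exactly the points where the mathematical content lies. Concretely, the realization of a map $[0,1]^j \to \Conf(k)$ as an embedded manifold depends on choices (the factorization through a space of embeddings of a fixed finite set, smoothings, collars), so it is only well defined up to contractible ambiguity; upgrading this pointwise recipe to an actual map of $(k+n)$-simplicial spaces $\Sq^{k+n}_{\undl{b},+}(B^k F^k) \to B^k\Tang_{k,n}^{\orient}$, natural in $[\undl{t}]$ and in the signature $\undl{b}$, with all higher coherences, and landing in the specific model of $\Tang_{k,n}^{\orient}$ constructed in \cite{AF2017}, is precisely the unproved content of the conjecture. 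Declaring that naturality ``is transparent on the geometric side'' and that the cubical pasting ``glues these pieces into an object of $\Tang^{\orient}_{k,n}$'' skips, among other things, the requirement that realizations be suitably cylindrical near boundary faces (an arbitrary isotopy is not collared), the identification of the formal sign/orientation rule with the orientations carried by objects of $\Tang^{\orient}_{k,n}$ (which you yourself flag as ``the hard part''), and the fact that you also need $B^k\Tang^{\orient}_{k,n}$ to exist as a globular $(k+n)$-uple Segal space so that $\Omega^k$ of your map is an $\EE_k$-monoidal functor.

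There is also a genuine error in your second step: the paper's $R_i^{k+n}$ are \emph{right} adjoints to the inclusions of the globular subcategories (\Cref{prp:rnk-adjoints}), not left adjoints, so there is no universal property giving a unique extension across them. What one can do is precompose with the counit $R_i^{k+n}(W) \to W$ (cf.\ \Cref{prp:ra-as-pullback}), and extend across each $\LL_i$ using that the target is Segal in the relevant coordinate; this repairs the formal part of the argument but removes the uniqueness claim, and in any case the formal part is not where the difficulty lies. As written, the proposal is a plan whose ``remaining checks'' are the theorem.
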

Analyzing this map might provide more insight into the \emph{tangle hypothesis}, an $\EE_k$-monoidal analogue of the cobordism hypothesis, formulated in \cite{AF2017} and proven conditionally on a different conjecture about factorization homology. As explained in the introduction, taking colimits would yield a map $\colim_k T^{k,n} \to \Bord_{n}^{\orient}$ that might help us understand the oriented version of the cobordism hypothesis.


\begin{thebibliography}{AABS02}

\bibitem[AABS02]{ABS2002}
Fahd~Ali Al-Agl, Ronald Brown, and Richard Steiner.
\newblock Multiple categories: the equivalence of a globular and a cubical approach.
\newblock {\em Adv. Math.}, 170(1):71--118, 2002.

\bibitem[Abr96]{Abrams1996}
Lowell Abrams.
\newblock Two-dimensional topological quantum field theories and {F}robenius algebras.
\newblock {\em J. Knot Theory Ramifications}, 5(5):569--587, 1996.

\bibitem[AF17]{AF2017}
David Ayala and John Francis.
\newblock The cobordism hypothesis.
\newblock \href{https://arxiv.org/abs/1705.02240}{arXiv:1705.02240}, 2017.

\bibitem[Bar05]{Barwick2005}
Clark Barwick.
\newblock {\em {$(\infty, n)$}-{C}at as a closed model category}.
\newblock ProQuest LLC, Ann Arbor, MI, 2005.
\newblock Thesis (Ph.D.)--University of Pennsylvania.

\bibitem[BD95]{BD1995}
John~C. Baez and James Dolan.
\newblock Higher-dimensional algebra and topological quantum field theory.
\newblock {\em J. Math. Phys.}, 36(11):6073--6105, 1995.

\bibitem[BS24]{BS2024}
Shaul Barkan and Jan Steinebrunner.
\newblock Segalification and the {B}oardman-{V}ogt tensor product.
\newblock \href{https://arxiv.org/abs/2301.08650}{arXiv:2301.08650}, 2024.

\bibitem[Cam23]{Campion2023}
Timothy Campion.
\newblock {T}he {G}ray tensor product of $(\infty,n)$-categories.
\newblock \href{https://arxiv.org/abs/2311.00205}{arXiv:2311.00205}, 2023.

\bibitem[CLL25]{CLL2025}
Bastiaan Cnossen, Tobias Lenz, and Sil Linskens.
\newblock Universality of span 2-categories and the construction of 6-functor formalisms.
\newblock \href{https://arxiv.org/abs/2505.19192}{arXiv:2505.19192}, 2025.

\bibitem[DK80]{DK1980}
W.~G. Dwyer and D.~M. Kan.
\newblock Calculating simplicial localizations.
\newblock {\em J. Pure Appl. Algebra}, 18(1):17--35, 1980.

\bibitem[DPP03]{DPP2003}
Robert Dawson, Robert Par\'e, and Dorette Pronk.
\newblock Adjoining adjoints.
\newblock {\em Adv. Math.}, 178(1):99--140, 2003.

\bibitem[GHN17]{GHN2017}
David Gepner, Rune Haugseng, and Thomas Nikolaus.
\newblock Lax colimits and free fibrations in {$\infty$}-categories.
\newblock {\em Doc. Math.}, 22:1225--1266, 2017.

\bibitem[GZ67]{GZ1967}
P.~Gabriel and M.~Zisman.
\newblock {\em Calculus of fractions and homotopy theory}, volume Band 35 of {\em Ergebnisse der Mathematik und ihrer Grenzgebiete [Results in Mathematics and Related Areas]}.
\newblock Springer-Verlag New York, Inc., New York, 1967.

\bibitem[Hau18]{Haugseng2018}
Rune Haugseng.
\newblock Iterated spans and classical topological field theories.
\newblock {\em Math. Z.}, 289(3-4):1427--1488, 2018.

\bibitem[JT07]{JT2007}
Andr\'{e} Joyal and Myles Tierney.
\newblock Quasi-categories vs {S}egal spaces.
\newblock In {\em Categories in algebra, geometry and mathematical physics}, volume 431 of {\em Contemp. Math.}, pages 277--326. Amer. Math. Soc., Providence, RI, 2007.

\bibitem[Juh18]{Juhasz2018}
Andr\'{a}s Juh\'{a}sz.
\newblock Defining and classifying {TQFT}s via surgery.
\newblock {\em Quantum Topol.}, 9(2):229--321, 2018.

\bibitem[LR25]{LR2025}
Félix Loubaton and Jaco Ruit.
\newblock On the squares functor and the {G}aitsgory-{R}ozenblyum conjectures.
\newblock \href{https://arxiv.org/abs/2507.07807}{arXiv:2507.07807}, 2025.

\bibitem[Lur09]{Lurie2009}
Jacob Lurie.
\newblock On the classification of topological field theories.
\newblock In {\em Current developments in mathematics, 2008}, pages 129--280. Int. Press, Somerville, MA, 2009.

\bibitem[RV16]{RV2016}
Emily Riehl and Dominic Verity.
\newblock {H}omotopy coherent adjunctions and the formal theory of monads.
\newblock {\em Adv. Math.}, 286:802--888, 2016.

\bibitem[SP09]{SP2009}
Christopher~John Schommer-Pries.
\newblock {\em The classification of two-dimensional extended topological field theories}.
\newblock ProQuest LLC, Ann Arbor, MI, 2009.
\newblock Thesis (Ph.D.)--University of California, Berkeley.

\end{thebibliography}
\end{document}